 \documentclass[draft]{article}

\usepackage{amsmath,amsfonts,amsthm,amssymb,amscd,cancel,color}
\usepackage{enumitem}
\usepackage{verbatim}
\usepackage[dvipdfmx]{graphicx}
\usepackage{ulem,color}

\setlength{\textheight}{8in}
\setlength{\oddsidemargin}{-0.1in}
\setlength{\textwidth}{6in}
\setlength{\parindent}{0.75cm}

\binoppenalty=9999 \relpenalty=9999

\renewcommand{\Re}{\mathop{\rm Re}\nolimits}
\renewcommand{\Im}{\mathop{\rm Im}\nolimits}

\theoremstyle{plain}
\newtheorem{theorem}{Theorem}[section]
\newtheorem{lemma}[theorem]{Lemma}
\newtheorem{proposition}[theorem]{Proposition}
\newtheorem{corollary}[theorem]{Corollary}
\theoremstyle{definition}
\newtheorem{definition}[theorem]{Definition}
\theoremstyle{remark}
\newtheorem{remark}[theorem]{Remark}

\newtheorem{assumption}[theorem]{Assumption}
\newtheorem{claim}[theorem]{Claim}

\newcommand{\R}{{\mathbb R}}

\newcommand{\Z}{{\mathbb Z}}

\newcommand{\N}{{\mathbb N}}

\def\im{{\rm i}}

\newcommand{\C}{\mathbb{C}}

\newcommand{\fx}{\mathfrak{X}}
\newcommand{\fxr}{\mathfrak{X}_{\sharp}}

\newcommand{\Or}{\Omega_{\sharp}}

\def\({\left(}
\def\){\right)}
\def\<{\left\langle}
\def\>{\right\rangle}



\numberwithin{equation}{section}

\setcounter{section}{0}
\begin{document}

\title{   Coordinates  at small energy and refined profiles for the Nonlinear Schr\"odinger Equation}

\author{Scipio Cuccagna, Masaya Maeda}
\maketitle

\begin{abstract} In this paper we give a new and simplified proof of the theorem on selection of  standing waves
for small energy solutions of the nonlinear Schr\"odinger equations (NLS) that we gave in  \cite{CM15APDE}. We consider a  NLS with a
Schr\"odinger operator with several eigenvalues, with corresponding families of small standing waves, and we show that
any  small energy solution  converges   to the orbit of a time periodic solution  plus a scattering term. The novel idea
is to consider the "refined   profile",   a  quasi--periodic function in time which almost solves   the  NLS and   encodes
the discrete modes of a solution. The refined   profile, obtained by elementary means,  gives us directly an optimal coordinate system,    avoiding  the normal form
arguments in  \cite{CM15APDE}, giving us also a better understanding of the Fermi Golden Rule.
\end{abstract}

\section{Introduction}

In this paper, we consider the following nonlinear Schr\"odinger equation (NLS):
\begin{equation}\label{nls}
\im \partial_t u = Hu + g(|u|^2)u,\quad (t,x)\in \R^{1+3}.
\end{equation}
Here $H:=-\Delta +V$ is  a  Schr\"odinger operator with $V\in \mathcal S(\R^3,\R)$ (Schwartz function). For the nonlinear term we require $g \in C^\infty (\R,\R)$ with $g(0)=0$ and the growth condition:
\begin{align}\label{eq:ggrowth}
\forall n\in\N\cup\{0\},\ \exists C_n>0,\ |g^{(n)}(s)|\leq  C_n \<s\>^{2-n}\text{ where } \<s\>:=(1+|s|^2)^{1/2}.
\end{align}
We consider the Cauchy problem of NLS \eqref{nls} with the initial condition $u(0)=u_0 \in H^1(\R^3,\C)$.
It is well known that NLS \eqref{nls} is locally well-posed (LWP) in $H^1$, see e.g.\  \cite{CazSemi,LPBook
}.

The aim of this paper is to revisit the study of asymptotic behavior of small (in $H^1$) solutions when the Schr\"odinger operator $H$ has several simple eigenvalues.
In such situation, it have been proved that solutions decouple into a soliton and dispersive wave \cite{SW04RMP,TY02ATMP,CM15APDE}.

To state our main result precisely, we introduce some notation  and several assumptions.
The following two assumptions for the Schr\"odinger operator $H$ hold
  for generic $V$.

\begin{assumption}\label{ass:nonres}
$0$ is neither an eigenvalue nor a resonance of $H$.
\end{assumption}

\begin{assumption}\label{ass:linearInd}
There exists $N\geq 2$ s.t.\
$$\sigma_d(H)=\{\omega_j\ |\ j=1,\cdots,N\},\text{ with }\omega_1<\cdots<\omega_N<0,$$
where $\sigma_d(H)$ is the set of discrete spectrum of $H$.
Moreover, we assume all $\omega_j$ are simple and
\begin{align}\label{eq:linind}
\forall \mathbf{m} \in \Z^N\setminus\{0\},\ \mathbf{m} \cdot  \boldsymbol{\omega} \neq 0,
\end{align}
where $\boldsymbol{\omega}:=(\omega_1,\cdots, \omega_N)$.
We set $\phi_j$ to be the eigenfunction of $H$ associated to the eigenvalue $\omega_j$ satisfying $\|\phi_j\|_{L^2}=1$.
We also set $\boldsymbol{\phi}=(\phi_1,\cdots,\phi_N)$.
\end{assumption}

\begin{remark}
	The cases $N=0,1$ are easier and are not treated it in this paper.
	Unfortunately, Assumption \eqref{ass:linearInd} excludes      radial potentials $V(r)$, for $r=|x|$, where in general we should expect
eigenvalues with   multiplicity higher than one. In fact the symmetries imply that  each eigenspace $\ker (H-\omega _j)$ is spanned by functions
which in spherical coordinates are separated and are of form $ \frac{1}{r}u_{j,l}(r)  e^{\im m \theta }  P_l^{m}(\cos (\varphi)) $  for appropriate $l\in \N \cup \{ 0\}$ with $P_l^{m}$
Legendre polinomials, and $m$ taking all values between $-l$ and $l$, so that, if $l\ge 1$, the multiplicity is at least $2l+1$. See p. 778 \cite{Cohen-Tannoudji}.

\end{remark}

As it is well known, $\phi_j$'s are smooth and decays exponentially.
For $s\geq 0, \gamma\geq 0$, we set
\begin{align*}
H^s_\gamma:=\{u\in H^s\ |\ \|u\|_{H^s_\gamma}:=\|\cosh(\gamma x)u\|_{H^s}<\infty\}.
\end{align*}
The following is well known.
\begin{proposition}\label{prop:gam}
There exists $\gamma_0>0$ s.t.\ for all $1\leq j\leq N$, we have $\phi_j\in \cap_{s\geq 0}H^s_{\gamma_0}$.
\end{proposition}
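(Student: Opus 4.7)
My plan is to apply the classical Agmon-style exponential-decay argument for Schr\"odinger eigenfunctions, then bootstrap the resulting $L^2$ decay to the weighted Sobolev spaces $H^s_{\gamma_0}$ by elliptic regularity.

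First, I would establish $L^2$-exponential decay. Pick any $\gamma_0\in (0, \sqrt{|\omega_N|})$; then $|\omega_j|-\gamma_0^2 \ge |\omega_N|-\gamma_0^2 >0$ for every $j$. Since $V\in \mathcal S(\R^3,\R)$ tends to $0$ at infinity, there exists $R>0$ with $|V(x)|\le \tfrac12(|\omega_N|-\gamma_0^2)$ for $|x|\ge R$. Testing $(H-\omega_j)\phi_j=0$ against $\chi^2 e^{2\gamma_0 F_\epsilon}\overline{\phi_j}$ (with $\chi$ a smooth cutoff vanishing on $B_R$ and equal to $1$ outside $B_{2R}$, and $F_\epsilon$ a smooth bounded approximation of $|x|$ satisfying $|\nabla F_\epsilon|\le 1$) and performing the IMS integration by parts yields a bulk term coercive with constant $\ge \tfrac12(|\omega_N|-\gamma_0^2)$ on $\{|x|\ge 2R\}$, while the commutator terms from $\nabla\chi$ are supported in the annulus $B_{2R}\setminus B_R$ where $\phi_j$ is bounded. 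Letting $\epsilon\to 0$ and enlarging the exponential approximation gives $e^{\gamma_0|x|}\phi_j\in L^2(\R^3)$, and since $\cosh(\gamma_0|x|)\lesssim e^{\gamma_0|x|}$ we conclude $\phi_j\in H^0_{\gamma_0}$.

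Next I would bootstrap to $H^s_{\gamma_0}$. Iterating elliptic regularity in $-\Delta\phi_j=(\omega_j-V)\phi_j$ with $V\in C^\infty$ gives $\phi_j\in C^\infty(\R^3)$. To incorporate the weight, set $w(x):=\cosh(\gamma_0|x|)$; this is smooth on $\R^3$ because it is an entire power series in $|x|^2$, and satisfies $|\nabla^k w|\le C_k w$ for every $k$. The weighted elliptic estimate
\[\|wu\|_{H^{s+2}} \lesssim \|w\,\Delta u\|_{H^s} + \|wu\|_{H^s}\]
applied to $u=\phi_j$, together with $-w\,\Delta\phi_j = w(V-\omega_j)\phi_j$ and the Schwartz decay of $V$ (which gives $\|wV\phi_j\|_{H^s}\lesssim \|w\phi_j\|_{H^s}$), closes an induction on $s\in\N$ starting from the base case $s=0$ established above. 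Fractional $s$ are handled by interpolation.

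The only delicate point is the weighted elliptic estimate used in the bootstrap: commuting $w$ past $\Delta$ generates terms involving $\nabla w$ and $\Delta w$, which are of the same exponential order as $w$ itself. These are kept under control by the comparability $|\nabla^k w|\le C_k w$, so that the extra contributions are either absorbed into $\|wu\|_{H^s}$ or handled through the Schwartz decay of $V$. Everything else is routine.
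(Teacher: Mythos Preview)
Your argument is essentially correct and is the standard route to this well-known fact. Note, however, that the paper does \emph{not} supply a proof of this proposition at all: it is simply stated as ``well known'' and used as background. So there is no proof in the paper to compare against; your Agmon-estimate-plus-elliptic-bootstrap sketch is exactly the kind of argument one would cite for this result.

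One minor comment on the bootstrap step: the commutator $[\Delta,w]$ produces a first-order term $2\nabla w\cdot\nabla u$, which a priori only gives
\[
\|wu\|_{H^{s+2}}\lesssim \|w\,\Delta u\|_{H^s}+\|wu\|_{H^{s+1}}
\]
rather than $+\|wu\|_{H^s}$. This is harmless---either run the induction one step at a time, or interpolate $\|wu\|_{H^{s+1}}\le \epsilon\|wu\|_{H^{s+2}}+C_\epsilon\|wu\|_{H^s}$ and absorb---but it is worth making explicit since you phrased the estimate with $\|wu\|_{H^s}$ on the right.
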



Using $\gamma_0>0$, we set
\begin{align*}
\Sigma^s:= H^s_{\gamma_0} \ \text{if }s\geq 0,\  \Sigma^{s}:=(H^{-s}_{\gamma_0})^* \ \text{ if } s<0, \ \Sigma^{0-}:=(\Sigma^0)^*\text{ and }\Sigma^\infty:=\cap_{s\geq 0}\Sigma^s.
\end{align*}
We will not consider any topology in $\Sigma^\infty$ and we will only consider it as a set.

In order to introduce the   notion of  refined profile, we need the following
  combinatorial set up.

\noindent We start the following standard basis of $\R ^N$, which we view as ``non--resonant" indices:
\begin{align}\label{eq:defnr0}
\mathbf{NR}_0:=\{\mathbf e_j \ |\ j=1,\cdots,N\},\ \mathbf e_j:=(\delta_{1j},\cdots, \delta_{Nj})\in \Z^N, \text{ $\delta_{ij}$   the Kronecker delta.}
\end{align}
More generally, the
  sets of resonant and non--resonant indices $\mathbf{R}$, $\mathbf{NR}$, are
\begin{align}\label{eq:defrnr}
\mathbf{R}:=\{\mathbf{m}\in \Z^N\ |\ \sum\mathbf{m}=1,\ \boldsymbol{\omega} \cdot \mathbf{m}>0\},\quad
\mathbf{NR}:=\{\mathbf{m}\in \Z^N\ |\ \sum\mathbf{m}=1,\ \boldsymbol{\omega} \cdot \mathbf{m}<0\},
\end{align}
where $\sum \mathbf{m}:=\sum_{j=1}^N m_j$ for $\mathbf{m}=(m_1,\cdots,m_N) \in \Z^N$.

\noindent From Assumption \ref{ass:linearInd}  it is clear that $\{\mathbf{m}\in \Z^N\ |\ \sum \mathbf{m}=1\}=\mathbf{R}\cup \mathbf{NR}$ and $\mathbf{NR}_0\subset \mathbf{NR}$.
For $\mathbf{m} =(m_1,\cdots, m_N)\in \Z^N$, we define
\begin{align}\label{eq:absmdef}
|\mathbf{m}|:=(|m_1|,\cdots,|m_N|)\in \Z^N,\ \|\mathbf{m}\|:=\sum |\mathbf{m}|=\sum_{j=1}^N |m_j|,
\end{align}
and introduce partial orders $\preceq$ and $\prec$ by
\begin{align}
\mathbf{m}\preceq \mathbf{n}\ \Leftrightarrow_{\mathrm{def}} \forall j\in \{1,\cdots,N\},\ m_j\leq n_j, \label{eq:defpreceq}\quad \text{and}\quad
\mathbf{m}\prec \mathbf{n} \ \Leftrightarrow_{\mathrm{def}} \mathbf{m}\preceq \mathbf{n}\ \text{ and } \mathbf{m}\neq \mathbf{n},
\end{align}
where $\mathbf{n}=(n_1,\cdots,n_N)$.
We define the minimal resonant indices by
\begin{align}\label{eq:defRmin}
\mathbf{R}_{\mathrm{min}}:=\{\mathbf{m} \in \mathbf{R}\ |\ \not\exists \mathbf{n}\in \mathbf{R}\ \mathrm{s.t.}\ |\mathbf{n}|\prec |\mathbf{m}|\}.
\end{align}
We also consider $\mathbf{NR}_1$  formed by the nonresonant indices not larger than   resonant indices:
\begin{align}\label{eq:defnr1}
\mathbf{NR}_1:=\{\mathbf{m}\in \mathbf{NR}\ |\ \forall \mathbf{n}\in \mathbf{R}_{\mathrm{min}},\ |\mathbf{n}| \not \prec |\mathbf{m}|\}.
\end{align}
%
\begin{lemma}\label{lem:RminNR1isfinite}
	Both $\mathbf{R}_{\mathrm{min}}$ and $\mathbf{NR}_1$ are finite sets.
\end{lemma}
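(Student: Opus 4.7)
The plan is to address the two finiteness claims separately. For $\mathbf{R}_{\mathrm{min}}$, Dickson's Lemma applied to $(\N^N,\preceq)$---which is a well-quasi-order---implies that $\{|\mathbf{m}|:\mathbf{m}\in\mathbf{R}\}\subseteq\N^N$ has only finitely many $\preceq$-minimal elements. Since $\mathbf{m}\mapsto|\mathbf{m}|$ on $\Z^N$ has fibers of size at most $2^N$, and $\mathbf{R}_{\mathrm{min}}$ consists of those $\mathbf{m}\in\mathbf{R}$ whose $|\mathbf{m}|$ is $\preceq$-minimal in $\{|\mathbf{m}'|:\mathbf{m}'\in\mathbf{R}\}$, the set $\mathbf{R}_{\mathrm{min}}$ is a finite union of finite fibers.

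For $\mathbf{NR}_1$, the plan is to bound $\|\mathbf{m}\|$ uniformly. Write $P=\{j:m_j>0\}$, $N_-=\{j:m_j<0\}$. Since $\sum\mathbf{m}=1$, if $N_-=\emptyset$ then $\mathbf{m}\in\{\mathbf{e}_1,\ldots,\mathbf{e}_N\}$, a finite list, so assume both $P$ and $N_-$ are nonempty and let $p=\sum_{j\in P}m_j=n+1$, $n=\sum_{k\in N_-}|m_k|$, so $\|\mathbf{m}\|=2n+1$. In the ``reversed'' subcase $\min P>\max N_-$, the inequality $\boldsymbol{\omega}\cdot\mathbf{m}<0$ together with $|\omega_j|\leq|\omega_{\min P}|$ for $j\in P$ and $|\omega_k|\geq|\omega_{\max N_-}|$ for $k\in N_-$ gives $p\,|\omega_{\min P}|>n\,|\omega_{\max N_-}|$; using $p=n+1$ and $|\omega_{\min P}|<|\omega_{\max N_-}|$, one solves to find $n<|\omega_{\min P}|/(|\omega_{\max N_-}|-|\omega_{\min P}|)$, bounding $\|\mathbf{m}\|$ by a constant depending only on $\boldsymbol{\omega}$ and $N$.

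In the complementary subcase there exist $j\in P$, $k\in N_-$ with $j<k$, and the ``swap'' $\mathbf{n}_s:=\mathbf{m}+s(\mathbf{e}_k-\mathbf{e}_j)$ for $s\in\{1,\ldots,\min(m_j,|m_k|)\}$ preserves $\sum\mathbf{n}_s=1$, satisfies $|\mathbf{n}_s|\prec|\mathbf{m}|$, and gives $\boldsymbol{\omega}\cdot\mathbf{n}_s=\boldsymbol{\omega}\cdot\mathbf{m}+s(\omega_k-\omega_j)>\boldsymbol{\omega}\cdot\mathbf{m}$. If some admissible $s$ pushes $\boldsymbol{\omega}\cdot\mathbf{n}_s>0$, then $\mathbf{n}_s\in\mathbf{R}$ and by finiteness of $\mathbf{R}_{\mathrm{min}}$ there is $\mathbf{n}'\in\mathbf{R}_{\mathrm{min}}$ with $|\mathbf{n}'|\preceq|\mathbf{n}_s|\prec|\mathbf{m}|$, contradicting $\mathbf{m}\in\mathbf{NR}_1$. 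Otherwise, replacing $\mathbf{m}$ by the reduced vector $\mathbf{n}_{s^*}$ with $s^*=\min(m_j,|m_k|)$, at least one index drops from the support, and the procedure iterates: after at most $N$ steps one reaches either $\mathbf{R}$ (contradiction) or a reversed configuration (bounded above).

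The principal obstacle lies in the quantitative bookkeeping of the iteration: establishing a uniform bound on the initial $\|\mathbf{m}\|$ requires controlling the cumulative $\|\cdot\|$-decreases through the iteration in terms of the reversed-subcase bound at termination. Selecting at each step the swap pair $(j,k)$ that maximizes $(\omega_k-\omega_j)\min(m_j,|m_k|)$, and using the realizability constraint $|m_l|\leq(\|\mathbf{m}\|+1)/2$ (a consequence of $\sum\mathbf{m}=1$, which prevents any single component from dominating), should furnish the quantitative control needed to complete the argument.
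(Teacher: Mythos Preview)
Your argument for the finiteness of $\mathbf{R}_{\mathrm{min}}$ via Dickson's Lemma is correct and in fact cleaner than the paper's: the paper instead constructs, for each pair $j<k$, an explicit two-component element $\mathbf{m}^{(jk)}\in\mathbf{R}$ (with $m_j^{(jk)}=-n_{jk}$, $m_k^{(jk)}=n_{jk}+1$, where $n_{jk}$ is the least integer with $n_{jk}(\omega_k-\omega_j)+\omega_k>0$) and argues by contradiction that an unbounded sequence in $\mathbf{R}_{\mathrm{min}}$ would eventually dominate some $|\mathbf{m}^{(jk)}|$. Your abstract well-quasi-order argument buys generality and brevity; the paper's buys concrete objects that get reused below.

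Your argument for $\mathbf{NR}_1$, however, has a genuine gap, which you yourself flag. The swap iteration shows only that from $\mathbf{m}\in\mathbf{NR}_1$ in the non-reversed case one can descend in $\|\cdot\|$ without entering $\mathbf{R}$ until reaching a reversed configuration of bounded norm. But this bounds the \emph{terminal} vector, not the initial $\mathbf{m}$: nothing in the argument prevents $\|\mathbf{m}\|$ from being arbitrarily large with a long chain of $\mathbf{NR}$-swaps before termination. Your proposed fix --- maximizing the swap and invoking $|m_l|\le(\|\mathbf{m}\|+1)/2$ --- does not close this: that inequality is scale-free and cannot by itself bound $\|\mathbf{m}\|$.

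The paper's route for $\mathbf{NR}_1$ avoids iteration entirely by reusing the explicit $\mathbf{m}^{(jk)}$. For $\mathbf{m}\in\mathbf{NR}_1$ and each pair $j<k$, the defining condition $|\mathbf{m}^{(jk)}|\not\prec|\mathbf{m}|$ forces either $|\mathbf{m}|=|\mathbf{m}^{(jk)}|$ or $|m_l|<|m_l^{(jk)}|$ for some $l\in\{j,k\}$. Running this over all pairs yields that all but at most one component of $|\mathbf{m}|$ is bounded by $\max_{j<k}(n_{jk}+1)$, and then $\sum\mathbf{m}=1$ bounds the last component. What is missing from your proposal is precisely this use of \emph{concrete} small-support elements of $\mathbf{R}$ to turn the $\mathbf{NR}_1$ condition directly into componentwise bounds, rather than chasing a reduction whose length you cannot control.
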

For the proof see Appendix \ref{append:1}.

We constructively define functions $\{G_\mathbf{m}\}_{\mathbf{m} \in \mathbf{R}_{\mathrm{min} }}\subset \Sigma^\infty$ which will be important in our analysis.

For $\mathbf{m}\in \mathbf{NR}_1$, we inductively define $\widetilde{\phi}_{\mathbf{m}}(0)$ and $g_{\mathbf{m}}(0)$ by
\begin{align}\label{eq:indefphigroot}
\widetilde{\phi}_{\mathbf{e}_j}(0):=\phi_j,\ g_{\mathbf{e}_j}(0)=0,\ j=1,\cdots,N,
\end{align}
and, for $\mathbf{m}\in \mathbf{NR}_1\setminus \mathbf{NR}_0$,  by
\begin{align}
\widetilde{\phi}_{\mathbf{m}}(0)&:=-(H-\mathbf{m} \cdot \boldsymbol{\omega})^{-1} g_{\mathbf{m}}(0),\label{eq:indefphi}\\
g_{\mathbf{m}}(0)&:=\sum_{m=1}^\infty \frac{1}{m!}g^{(m)}(0)\sum_{(\mathbf{m}_1,\cdots,\mathbf{m}_{2m+1})\in A(m,\mathbf{m})}\widetilde\phi_{\mathbf{m}_1}(0)\cdots \widetilde{\phi}_{\mathbf{m}_{2m+1}}(0),\label{eq:indefg}
\end{align}
where
\begin{align}\label{eq:defAmm}
A(m,\mathbf{m}):=\left\{ \{\mathbf{m}_j\}_{j=1}^{2m+1}\in (\mathbf{NR}_1)^{2m+1}\ |\ \sum_{j=0}^m \mathbf{m}_{2j+1}-\sum_{j=1}^m \mathbf{m}_{2j}= \mathbf{m},\ \sum_{j=0}^{2m+1} |\mathbf{m}_{j}|=|\mathbf{m}|\right\} 
\end{align}

\begin{remark}
	For each $m\geq 1$ and $\mathbf{m} \in \mathbf{NR}_1$, $A(m,\mathbf{m})$ is a finite set.
	Furthermore, for sufficiently large $m$, we have $A(m,\mathbf{m})=\emptyset$.
	Thus, even though we are expressing $g_\mathbf{m}(0)$ in \eqref{eq:indefg} by a series, the sum is finite.
\end{remark}

For $\mathbf{m}\in \mathbf{R}_{\mathrm{min}}$, we define $G_{\mathbf{m}}$ by
\begin{align}\label{eq:defG}
G_{\mathbf{m}}:=\sum_{m=1}^\infty \frac{1}{m!}g^{(m)}(0)\sum_{(\mathbf{m}_1,\cdots,\mathbf{m}_{2m+1})\in A(m,\mathbf{m})}\widetilde\phi_{\mathbf{m}_1}(0)\cdots \widetilde{\phi}_{\mathbf{m}_{2m+1}}(0).
\end{align}
\begin{remark}
	$g_{\mathbf{m}}(0)$ and $G_\mathbf{m}$ are defined similarly. We are using a different notation to emphasize that $g_{\mathbf{m}}(0)$ has
	$\mathbf{m}\in \mathbf{NR}_1$, while $G_{\mathbf{m}}$ has  $\mathbf{m}\in \mathbf{R}_{\mathrm{min}} $.
\end{remark}

The following is the nonlinear Fermi Golden Rule (FGR) assumption.
\begin{assumption}\label{ass:FGR}
	For all $\mathbf{m} \in \mathbf{R}_{\mathrm{min}}$, we assume
	\begin{align}\label{eq:FGR}
	\int_{|\zeta|^2=\mathbf{m}\cdot \boldsymbol{\omega}}|  \widehat{G}_{\mathbf{m}}(\zeta)|^2\,dS \neq 0,
	\end{align}
	where $\widehat{G}_{\mathbf{m}}$ is the distorted Fourier transform associated to $H$.
\end{assumption}

\begin{remark}  In the case $N=2$ and $\omega_1+2(\omega_2-\omega_1)>0$, we have $G_{\mathbf{m}}=g'(0)\phi_1\phi_2^2$, which corresponds to the condition in Tsai and Yau \cite{TY1}, based on the explicit formulas in Buslaev and Perelman  \cite{BP2} and Soffer and Weinstein  \cite{SW3}.
These works are related to  Sigal  \cite{Sigal93CMP}.
 More general situations are considered in
   \cite{CM15APDE}, where however
 the
 $G_{\mathbf{m}}$ are obtained  after a certain number of coordinate changes, so that  the relation of the  $G_{\mathbf{m}}$  and the  $\phi_j$'s is not discussed in   \cite{CM15APDE} and is not easy to track.
\end{remark}

For a generic nonlinear function $g$ the condition \eqref{eq:FGR} is a consequence of the following simpler one, which is similar to (11.6) in Sigal \cite{Sigal93CMP},
\begin{align}\label{eq:FGRsimplified}
	\int_{|\zeta|^2=\mathbf{m}\cdot \boldsymbol{\omega}}| \widehat{\phi ^ \mathbf{m}} (\zeta)|^2\,dS \neq 0 \text{ for all $\mathbf{m} \in \mathbf{R}_{\mathrm{min}}$ }
	\end{align}
where $ \phi ^ \mathbf{m}:= \prod _{j=1,..., N}\phi _j ^{m_j}$.  Both conditions \eqref{eq:FGR} and, even more so, \eqref{eq:FGRsimplified}
are simpler than the analogous conditions in Cuccagna and Maeda \cite{CM15APDE}.

 We have the following.
\begin{proposition} \label{lem:generic g}
Let $\displaystyle L=\sup \{  \frac{\| \mathbf{m} \| -1}{2} : \mathbf{m} \in \mathbf{R}_{\mathrm{min}} \} $ and suppose that the operator $H$
satisfies condition \eqref{eq:FGRsimplified}.  Then there exists an open dense
subset $\Omega $ of $\R ^{L-1}$ s.t.\ if $(g' (0),...., g ^{(L)} (0))\in \Omega $ such that Assumption \ref{ass:FGR} is true   for
\eqref{nls}.
\end{proposition}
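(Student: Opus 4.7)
The plan is to view $G_{\mathbf{m}}$ as a polynomial in the real variables $a_k:=g^{(k)}(0)$, $k=1,\dots,L$, and to isolate a distinguished monomial whose coefficient, after distorted Fourier transform, is exactly the quantity appearing in hypothesis \eqref{eq:FGRsimplified}. Put $m^\ast(\mathbf{m}):=(\|\mathbf{m}\|-1)/2$; since $\sum\mathbf{m}=1$ forces $\|\mathbf{m}\|$ odd for every $\mathbf{m}\in\mathbf{R}\cup\mathbf{NR}$, this is a well-defined non-negative integer, and $m^\ast(\mathbf{m})\le L$ for $\mathbf{m}\in\mathbf{R}_{\mathrm{min}}$.

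By induction on $\|\mathbf{m}\|$ along \eqref{eq:indefphigroot}--\eqref{eq:indefg}, each $\widetilde{\phi}_{\mathbf{m}}(0)$ is a polynomial in $(a_1,\dots,a_{m^\ast(\mathbf{m})})$ with coefficients in $\Sigma^{\infty}$; the inductive step uses only that the resolvent $(H-\mathbf{m}\cdot\boldsymbol{\omega})^{-1}$ maps $\Sigma^{\infty}$ to $\Sigma^{\infty}$ (guaranteed because Assumption \ref{ass:linearInd} prevents $\mathbf{m}\cdot\boldsymbol{\omega}$ from hitting $\sigma_d(H)$, while $\mathbf{m}\cdot\boldsymbol{\omega}<0$ keeps it out of the continuous spectrum). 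The same reasoning applied to \eqref{eq:defG} shows $G_{\mathbf{m}}$ is a polynomial in $(a_1,\dots,a_{m^\ast(\mathbf{m})})$. The key claim is that the coefficient of the monomial $a_{m^\ast(\mathbf{m})}$ alone (i.e.\ with all other $a_k$'s to power $0$) equals $\frac{c_{\mathbf{m}}}{m^\ast(\mathbf{m})!}\,\phi^{\mathbf{m}}$ with $c_{\mathbf{m}}>0$. Indeed, this monomial can only come from the $m=m^\ast(\mathbf{m})$ summand in \eqref{eq:defG}: for $m<m^\ast(\mathbf{m})$ the constraint $\sum_j|\mathbf{m}_j|=|\mathbf{m}|$ with $\|\mathbf{m}_j\|\ge 1$ and only $2m+1$ factors forces at least one $\mathbf{m}_j\in\mathbf{NR}_1\setminus\mathbf{NR}_0$, whose $\widetilde{\phi}_{\mathbf{m}_j}(0)$ depends only on variables of strictly smaller index. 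When $m=m^\ast(\mathbf{m})$, the same counting forces every $\mathbf{m}_j$ to be a basis vector $\mathbf{e}_{k_j}$, so $\prod_j\widetilde{\phi}_{\mathbf{m}_j}(0)=\prod_j\phi_{k_j}=\phi^{\mathbf{m}}$; the integer $c_{\mathbf{m}}$ counting admissible arrangements is a positive product of two multinomial coefficients (distributing positive coordinates of $\mathbf{m}$ among the $m^\ast(\mathbf{m})+1$ odd positions and negative coordinates among the $m^\ast(\mathbf{m})$ even positions).

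Consequently
\[
G_{\mathbf{m}}(x;a)=\frac{c_{\mathbf{m}}}{m^\ast(\mathbf{m})!}\,a_{m^\ast(\mathbf{m})}\,\phi^{\mathbf{m}}(x)+R_{\mathbf{m}}(x;a_1,\dots,a_{m^\ast(\mathbf{m})-1}),
\]
and the FGR integral
\[
I_{\mathbf{m}}(a):=\int_{|\zeta|^2=\mathbf{m}\cdot\boldsymbol{\omega}}|\widehat{G}_{\mathbf{m}}(\zeta)|^2\,dS
\]
becomes a quadratic polynomial in $a_{m^\ast(\mathbf{m})}$ with leading coefficient $\frac{c_{\mathbf{m}}^2}{(m^\ast(\mathbf{m})!)^2}\int_{|\zeta|^2=\mathbf{m}\cdot\boldsymbol{\omega}}|\widehat{\phi^{\mathbf{m}}}(\zeta)|^2\,dS$, strictly positive by \eqref{eq:FGRsimplified}. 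Therefore $I_{\mathbf{m}}$ is a nonzero polynomial on $\R^{L}$, its zero locus is a proper algebraic subvariety, and the finite union (Lemma \ref{lem:RminNR1isfinite}) over $\mathbf{m}\in\mathbf{R}_{\mathrm{min}}$ of these zero sets has empty interior; its complement $\Omega$ is the desired open dense set on which Assumption \ref{ass:FGR} holds simultaneously for every minimal resonant index. The main obstacle is the bookkeeping that produces the leading monomial: one must rule out that nested resolvents in \eqref{eq:indefphi} regenerate the variable $a_{m^\ast(\mathbf{m})}$ at sub-leading levels, which is exactly what the strict inequality $\|\mathbf{m}_j\|<\|\mathbf{m}\|$ in every sub-top tuple ensures.
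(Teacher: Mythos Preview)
Your proposal is correct and follows essentially the same approach as the paper: both isolate the top-degree contribution $m=m^\ast(\mathbf{m})=L_{\mathbf{m}}$ in \eqref{eq:defG}, observe that the constraint $\sum_j|\mathbf{m}_j|=|\mathbf{m}|$ with $2L_{\mathbf{m}}+1$ factors forces every $\mathbf{m}_j\in\mathbf{NR}_0$ so that the coefficient of $g^{(L_{\mathbf{m}})}(0)$ is a positive multiple of $\phi^{\mathbf{m}}$, and then use hypothesis \eqref{eq:FGRsimplified} to conclude that $I_{\mathbf{m}}$ is a quadratic in $g^{(L_{\mathbf{m}})}(0)$ with nonzero leading coefficient. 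Your argument is in fact slightly more thorough than the paper's in verifying that the lower-order remainder $K_{\mathbf{m}}$ depends only on $g'(0),\dots,g^{(L_{\mathbf{m}}-1)}(0)$, and your conclusion via zero loci of nonzero polynomials is equivalent to the paper's direct observation that $I_{\mathbf{m}}=0$ cuts out at most two values of $g^{(L_{\mathbf{m}})}(0)$ for each choice of the lower derivatives.
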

\proof See Sect. \ref{append:1}. \qed

For $\mathbf{z}=(z_1,\cdots,z_N)\in \C^N$, $\mathbf{m}=(m_1,\cdots,m_N)\in \Z^N$, we define
\begin{align}\label{eq:zkakko}
\mathbf{z}^\mathbf{m}&:=z_1^{(m_1)}\cdots z_N^{(m_N)} \in \C,\text{ where } z^{(m)}:=\begin{cases} z^m & m\geq 0\\ \bar z^{-m} & m<0,\end{cases}\quad \text{ and }\\
\label{eq:defzabs}
|\mathbf{z}|^k&:=(|z_1|^k,\cdots,|z_N|^k)\in \R^N,\ \|\mathbf{z}\|:=\sum |\mathbf{z}|= \sum_{j=1}^N|z_j|\in\R.
\end{align}

We will use the following notation for a ball in a Banach space $B$:
\begin{align}\label{eq:def:ball}
\mathcal{B}_B(u,r):=\{v\in B\ |\ \|v-u\|_B<r\}.
\end{align}

The ``refined profile" is of the form $\phi(\mathbf{z}) = \mathbf{z} \cdot \boldsymbol{\phi} +o(\|\mathbf{z}\|)$ and is defined by the
following proposition.

\begin{proposition}[Refined Profile]\label{prop:rp}
For any $s \geq 0$, there exist $\delta_s>0$ and $C_s>0$ s.t.\ $\delta_s$ is nonincreasing w.r.t.\ $s\geq 0$ and there exist
\begin{align*}
\{\psi_\mathbf{m}\}_{\mathbf{m}\in \mathbf{NR}_1} &\in C^\infty ( \mathcal{B}_{\R^N}(0,\delta_s^2),(\Sigma^s)^{\sharp \mathbf{NR}_1}), \ \boldsymbol{\varpi}(\cdot) \in C^\infty (\mathcal{B}_{\R^N}(0,\delta_s^2),\R^N) \\
&\text{ and }\mathcal R \in C^\infty(\mathcal{B}_{\C^N}(0,\delta_s),\Sigma^s),
\end{align*}
 s.t.\ $\boldsymbol{\varpi}(0,\cdots,0)=\boldsymbol{\omega}$, $\psi_{\mathbf{m}}(0)=0$ for all $\mathbf{m}\in \mathbf{NR}_1$ and
\begin{align}\label{est:R}
\|\mathcal R(\mathbf{z})\|_{\Sigma^s}\leq C_s \|\mathbf{z} \|^2\sum_{\mathbf{m}\in \mathbf{R}_{\min}} |\mathbf{z}^{\mathbf{m}}|,
\end{align}
where $B_X(a,r):=\{u\in X\ |\ \|u-a\|_X<r\}$, and  if we set
\begin{align}\label{eq:Phianz}
\phi(\mathbf{z}):=\mathbf{z}\cdot\boldsymbol{\phi} + \sum_{\mathbf{m}\in \mathbf{NR}_1}\mathbf{z}^{\mathbf{m}}\psi_{\mathbf{m}}(|\mathbf{z}|^2)\text{ and }z_j(t)=e^{-\im \varpi_j(|\mathbf{z}|^2) t}z_j,
\end{align}
then, setting $\mathbf{z}(t)=(z_1(t),\cdots,z_n(t))$, the function $u(t):=\phi\(\mathbf{z}(t)\)$ satisfies
\begin{align}\label{eq:nlsforce}
\im \partial_t u = H u + g(|u|^2)u -\sum_{\mathbf{m}\in \mathbf{R}_{\mathrm{min}}}\mathbf{z}(t)^{\mathbf{m}} G_{\mathbf{m}} - \mathcal R(\mathbf{z}(t)),
\end{align}
where $\{G_\mathbf{m}\}_{\mathbf{R}_{\min}} \subset\(\Sigma^\infty\)^{\sharp \mathbf{R}_{\min}}$ is given in \eqref{eq:defG}.
Finally, writing $\psi_\mathbf{m}=\psi_\mathbf{m}^{(s)}$, $\boldsymbol{\varpi}=\boldsymbol{\varpi}^{(s)}$ and $\mathcal R=\mathcal R^{(s)}$, for $s_1<s_2$ we have $\psi_\mathbf{m}^{(s_1)}(|\cdot|^2)=\psi_\mathbf{m}^{(s_2)}(|\cdot|^2)$, $\boldsymbol{\varpi}^{(s_1)}(|\cdot|^2)=\boldsymbol{\varpi}^{(s_2)}(|\cdot|^2)$ and $\mathcal R^{(s_1)}=\mathcal R^{(s_2)}$ in $ \mathcal{B}_{\R^N}(0,\delta_{s_2})$.
\end{proposition}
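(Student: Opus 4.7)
The plan is to substitute $u(t)=\phi(\mathbf{z}(t))$ into \eqref{nls}, expand both sides in monomials $\mathbf{z}^\mathbf{m}$ with $\sum\mathbf{m}=1$, and solve the resulting finite hierarchy for $(\{\psi_\mathbf{m}\}_{\mathbf{m}\in\mathbf{NR}_1},\boldsymbol{\varpi})$ via the implicit function theorem. Since the flow $\dot z_j=-\im\varpi_j(|\mathbf{z}|^2)z_j$ preserves each $|z_j|^2$, both $\boldsymbol{\varpi}$ and the $\psi_\mathbf{m}$'s are constants of motion along trajectories, and each monomial evolves by a pure phase, $\mathbf{z}(t)^\mathbf{m}=e^{-\im\mathbf{m}\cdot\boldsymbol{\varpi}t}\mathbf{z}^\mathbf{m}$. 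Taylor-expanding $g(|u|^2)u=\sum_{m\ge 1}\frac{g^{(m)}(0)}{m!}u^{m+1}\bar u^m$, and using that each factor of $u$ (resp.\ $\bar u$) contributes $+1$ (resp.\ $-1$) to $\sum m_j$, I see that both sides of the equation are sums of monomials with $\sum\mathbf{m}=1$, whose leading coefficient structure is exactly the combinatorial datum $A(m,\mathbf{m})$ of \eqref{eq:defAmm}.

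Next I match the coefficient of each $\mathbf{z}^\mathbf{m}$ and handle three cases. For $\mathbf{m}\in\mathbf{NR}_1\setminus\mathbf{NR}_0$: Assumption \ref{ass:linearInd} places $\mathbf{m}\cdot\boldsymbol{\omega}$ in $(-\infty,0)\setminus\sigma_d(H)$, so $(H-\mathbf{m}\cdot\boldsymbol{\varpi})^{-1}$ is bounded on $\Sigma^s$ for $\boldsymbol{\varpi}$ close to $\boldsymbol{\omega}$, and the corresponding hierarchy equation uniquely determines $\psi_\mathbf{m}$. For $\mathbf{m}=\mathbf{e}_j$: I apply Lyapunov--Schmidt with the normalization $\psi_{\mathbf{e}_j}\perp\phi_j$, which gives, from the $\phi_j$-projection, the modulation relation $\varpi_j=\omega_j+\langle\phi_j,F_{\mathbf{e}_j}\rangle_{L^2}$ and, from its complement, $\psi_{\mathbf{e}_j}$ via the restriction of $(H-\varpi_j)^{-1}$ to $\phi_j^\perp$ (well-defined since $\omega_j$ is simple). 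For $\mathbf{m}\in\mathbf{R}_{\mathrm{min}}$: the coefficient is moved to the right-hand side as $\mathbf{z}^\mathbf{m}G_\mathbf{m}$; an induction on $\|\mathbf{m}\|$, comparing with the recursion \eqref{eq:indefphigroot}--\eqref{eq:defG}, confirms this matches the prescribed $G_\mathbf{m}$ at leading order. All remaining monomials go into $\mathcal R(\mathbf{z})$; for any such $\mathbf{m}$ there exists $\mathbf{n}\in\mathbf{R}_{\mathrm{min}}$ with $|\mathbf{n}|\prec|\mathbf{m}|$, and the parity identity $\|\mathbf{m}\|\equiv\sum\mathbf{m}=1\equiv\|\mathbf{n}\|\pmod 2$ forces $\|\mathbf{m}\|-\|\mathbf{n}\|\ge 2$, giving $|\mathbf{z}^\mathbf{m}|\le\|\mathbf{z}\|^2|\mathbf{z}^\mathbf{n}|$ and hence \eqref{est:R}.

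Since $\mathbf{NR}_1$ is finite by Lemma \ref{lem:RminNR1isfinite}, the above is a finite, smooth fixed-point problem for $(\{\psi_\mathbf{m}\}_{\mathbf{m}\in\mathbf{NR}_1},\boldsymbol{\varpi})$ parameterized by $|\mathbf{z}|^2\in\R^N$; the growth condition \eqref{eq:ggrowth} together with Proposition \ref{prop:gam} makes the $F_\mathbf{m}$'s smooth from $\Sigma^s$-tuples into $\Sigma^s$, so the implicit function theorem yields smooth solutions on $\mathcal{B}_{\R^N}(0,\delta_s^2)$, and the compatibility $\psi_\mathbf{m}^{(s_1)}=\psi_\mathbf{m}^{(s_2)}$, etc.\ follows by the uniqueness half of the IFT. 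The main obstacle will be the verification of these IFT hypotheses in the weighted spaces $\Sigma^s$: one must show that the (finite) multilinear combinations appearing in each $F_\mathbf{m}$ define $C^\infty$ maps between the relevant $\Sigma^s$-spaces with controlled derivatives, and that the resolvents $(H-\mathbf{m}\cdot\boldsymbol{\varpi})^{-1}$ admit uniform bounds on $\Sigma^s$ over the finite set $\mathbf{m}\in\mathbf{NR}_1$ and $\boldsymbol{\varpi}$ close to $\boldsymbol{\omega}$, both of which in turn dictate how $\delta_s$ must decrease with $s$.
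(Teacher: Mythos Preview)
Your approach is essentially the paper's: substitute the ansatz, match coefficients of $\mathbf z^{\mathbf m}$, handle $\mathbf m=\mathbf e_j$ by Lyapunov--Schmidt with the normalization $\psi_{\mathbf e_j}\perp\phi_j$, handle $\mathbf m\in\mathbf{NR}_1\setminus\mathbf{NR}_0$ by resolvent inversion, and close with the implicit function theorem on the finite system parameterized by $|\mathbf z|^2$. Your parity argument for \eqref{est:R} is correct and is exactly what the paper uses.

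There is, however, a real gap in how you expand the nonlinearity. You write $g(|u|^2)u=\sum_{m\ge1}\frac{g^{(m)}(0)}{m!}u^{m+1}\bar u^m$, i.e.\ a Taylor expansion of $g$ at the origin. Since $g$ is only $C^\infty$, this must be truncated, and the Taylor remainder $R_M(|u|^2)u=O(|u|^{2M+3})$ cannot simply be thrown into $\mathcal R(\mathbf z)$: when $\mathbf z=z_1\mathbf e_1$ the right side of \eqref{est:R} vanishes (no $\mathbf m\in\mathbf R_{\min}$ lives on a single coordinate), whereas $R_M(|\phi(z_1\mathbf e_1)|^2)\phi(z_1\mathbf e_1)\ne0$. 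Put differently, expanding at $0$ makes each hierarchy coefficient only a \emph{polynomial} in $|\mathbf z|^2$, so the $\mathbf e_j$--equations are solved only to finite order and the leftover has the wrong structure. The paper's fix is to Taylor-expand $g$ not at $0$ but around the ``diagonal'' part $\sum_{\mathbf n\in\mathbf{NR}_1}|\mathbf z|^{2|\mathbf n|}\widetilde\phi_{\mathbf n}^2$ of $|\phi(\mathbf z)|^2$; then the $g_{\mathbf m}$ become genuine smooth functions of $(|\mathbf z|^2,\{\psi_{\mathbf n}\})$, and the truncation error is a high power of the \emph{off-diagonal} part $\sum_{\mathbf m_1\ne\mathbf m_2}\mathbf z^{\mathbf m_1}\mathbf z^{-\mathbf m_2}\widetilde\phi_{\mathbf m_1}\widetilde\phi_{\mathbf m_2}$. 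A separate combinatorial step (Claim~\ref{claim:rem1}) shows this off-diagonal remainder already obeys \eqref{est:R}; in particular it vanishes when only one $z_j$ is nonzero. With this modification your IFT argument goes through exactly as you describe.
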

\proof See Sect. \ref{sec:ref_profile}. \qed

The refined profile  $\phi(\mathbf{z})$  contains as a special case the small standing waves bifurcating from the eigenvalues, when they are simple.

\begin{corollary}\label{cor:smallbddst}
Let $s>0$ and $j\in \{1,\cdots,N\}$.
Then, for $z\in \mathcal B_{\C}(0,\delta_s)$, $\phi\(z(t)\mathbf{e}_j\)$ solves \eqref{nls} for $z(t)=e^{-\im \varpi_j(|z\mathbf{e}_j|^2)t}z$.

\end{corollary}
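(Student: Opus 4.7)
The plan is to derive the corollary as a direct consequence of Proposition \ref{prop:rp}, specialized to one-dimensional initial data along the $j$-th axis. By \eqref{eq:nlsforce}, if we set $\mathbf{z}(t) = z(t)\mathbf{e}_j$ with $z_j(t) := e^{-\im \varpi_j(|z\mathbf{e}_j|^2)t} z$ and $z_i(t) \equiv 0$ for $i\neq j$, then $u(t) := \phi(\mathbf{z}(t))$ satisfies
\begin{align*}
\im \partial_t u = H u + g(|u|^2)u -\sum_{\mathbf{m}\in \mathbf{R}_{\mathrm{min}}}\mathbf{z}(t)^{\mathbf{m}} G_{\mathbf{m}} - \mathcal R(\mathbf{z}(t)).
\end{align*}
Note that this $\mathbf{z}(t)$ is indeed consistent with the evolution $z_i(t)=e^{-\im \varpi_i(|\mathbf{z}|^2)t}z_i$ prescribed in \eqref{eq:Phianz}, since the zero components stay zero and only $|z_j|^2$ enters as argument of $\boldsymbol{\varpi}$. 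Hence it remains to show that both forcing terms vanish identically.

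For the first forcing term I would argue combinatorially. Given $\mathbf{m}\in \mathbf{R}_{\mathrm{min}}\subset \mathbf{R}$, we have $\sum \mathbf{m} = 1$ and $\boldsymbol{\omega}\cdot \mathbf{m} > 0$. If $\mathbf{m}$ were supported only on the $j$-th coordinate, then $\sum \mathbf{m}=1$ would force $\mathbf{m}=\mathbf{e}_j$, giving $\boldsymbol{\omega}\cdot \mathbf{m}=\omega_j<0$, a contradiction. Thus there exists $i\neq j$ with $m_i\neq 0$. Since $(z\mathbf{e}_j)^{\mathbf{m}}$ contains the factor $0^{(m_i)}=0$ (using the convention in \eqref{eq:zkakko}), we conclude $\mathbf{z}(t)^{\mathbf{m}}=0$ for every $\mathbf{m}\in \mathbf{R}_{\mathrm{min}}$, so that $\sum_{\mathbf{m}\in \mathbf{R}_{\mathrm{min}}}\mathbf{z}(t)^{\mathbf{m}} G_{\mathbf{m}}=0$.

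For the remainder $\mathcal R(\mathbf{z}(t))$, the same combinatorial observation combined with the estimate \eqref{est:R},
\begin{align*}
\|\mathcal R(\mathbf{z}(t))\|_{\Sigma^s}\leq C_s \|\mathbf{z}(t)\|^2\sum_{\mathbf{m}\in \mathbf{R}_{\min}} |\mathbf{z}(t)^{\mathbf{m}}|=0,
\end{align*}
immediately yields $\mathcal R(\mathbf{z}(t))=0$ for all $t$. Putting everything together, $u(t)=\phi(z(t)\mathbf{e}_j)$ solves \eqref{nls}.

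There is no real obstacle in this argument; the only point requiring a line of thought is ruling out that a resonant index $\mathbf{m}\in \mathbf{R}_{\mathrm{min}}$ could be proportional to $\mathbf{e}_j$, which is excluded precisely because all eigenvalues $\omega_j$ are negative while $\mathbf{R}$ requires positivity of $\boldsymbol{\omega}\cdot\mathbf{m}$. This sign mismatch is exactly what lets the refined profile reduce to an exact standing wave along each coordinate axis.
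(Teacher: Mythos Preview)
Your proof is correct and follows exactly the same approach as the paper's own proof: both use \eqref{eq:nlsforce} together with the vanishing $(z\mathbf{e}_j)^{\mathbf{m}}=0$ for $\mathbf{m}\in\mathbf{R}_{\mathrm{min}}$ and the estimate \eqref{est:R} to kill the forcing and remainder terms. Your version is simply more explicit in justifying the combinatorial vanishing, which the paper states without explanation.
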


\begin{proof}
Since $(z\mathbf{e}_j)^{\mathbf{m}}=0$ for $\mathbf{m}\in \mathbf{R}_{\mathrm{min}}$, we see that from \eqref{est:R} and \eqref{eq:nlsforce} the remainder terms $\sum_{\mathbf{m}\in \mathbf{R}_{\mathrm{min}}}\mathbf{z}(t)^{\mathbf{m}} G_{\mathbf{m}} + \mathcal R(\mathbf{z}(t))$ are $0$ in \eqref{eq:nlsforce}.
Therefore, we have the conclusion.
\end{proof}

\begin{remark}
	If the eigenvalues of $H$ are not simple the above   does not hold anymore in general. See Gustafson-Phan \cite{GP11SIMA}.
\end{remark}

We call solitons, or standing waves, the functions
\begin{align}\label{eq:defbddst}
\phi_j(z):=\phi(z \mathbf{e}_j).
\end{align}

%
%
%

The main result, which have first proved in \cite{CM15APDE}   is the following.
\begin{theorem}\label{thm:main}
Under the Assumptions \ref{ass:nonres}, \ref{ass:linearInd} and \ref{ass:FGR}, there exist $\delta_0>0$ and $C>0$ s.t.\ for all $u_0\in H^1$ with $\|u_0\|_{H^1}< \delta_0$, there exists $j\in \{1,\cdots,N\}$, $z\in C^1(\R,\C)$, $\eta_+\in H^1$ and $\rho_+\geq 0$ s.t.
\begin{align*}
\lim_{t\to \infty} \|u(t)- \phi_j(z(t)) - e^{\im t \Delta }\eta_+ \|_{H^1}=0,
\end{align*}
and
\begin{align*}
\lim_{t\to \infty}|z(t)|=\rho_+,\ C^{-1}\|u_0\|_{H^1}^2\leq \rho_+^2 + \|\eta_+\|_{H^1}^2 \leq  C\|u_0\|_{H^1}^2.
\end{align*}
\end{theorem}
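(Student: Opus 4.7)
The plan is to use the refined profile of Proposition \ref{prop:rp} as a modulation ansatz and to decompose the solution as
\begin{equation*}
u(t)=\phi(\mathbf{z}(t))+\eta(t),
\end{equation*}
with $\mathbf{z}(t)\in\C^N$ small and $\eta(t)$ a ``dispersive'' remainder characterized by $N$ complex symplectic orthogonality conditions of the form $\langle\eta(t),\im\partial_{z_j}\phi(\mathbf{z}(t))\rangle=0$ for $j=1,\dots,N$. At $\mathbf{z}=0$ these reduce to $\langle\eta,\im\phi_j\rangle=0$, i.e.\ $\eta\in P_c(H)L^2$; for $\mathbf{z}$ small the implicit function theorem produces the decomposition uniquely with $\|\mathbf{z}(0)\|+\|\eta(0)\|_{H^1}\lesssim\|u_0\|_{H^1}$.

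Substituting the ansatz into \eqref{nls} and using \eqref{eq:nlsforce} converts the PDE into a coupled system
\begin{align*}
\im\partial_t\eta&=H\eta+\sum_{\mathbf{m}\in\mathbf{R}_{\min}}\mathbf{z}^{\mathbf{m}}G_{\mathbf{m}}+\mathcal N(\mathbf{z},\eta),\\
\dot z_j&=-\im\varpi_j(|\mathbf{z}|^2)z_j+\mathcal M_j(\mathbf{z},\eta),
\end{align*}
where $\mathcal N$ collects $\mathcal R(\mathbf{z})$ together with terms at least linear in $\eta$, while $\mathcal M_j$ arises from inverting the Gram matrix of $\{\partial_{z_k}\phi,\partial_{\bar z_k}\phi\}$ against \eqref{eq:nlsforce} and vanishes when $\eta=0$. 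The crucial feature supplied by the refined profile is that the only source in the $\eta$-equation which is polynomial of low order in $\mathbf{z}$ and is not absorbed into $\phi$ is the resonant forcing $\sum_{\mathbf{R}_{\min}}\mathbf{z}^{\mathbf{m}}G_{\mathbf{m}}$; all non-resonant monomials have already been swept into $\phi(\mathbf{z})$ via the $\psi_{\mathbf{m}}$.

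The next step is a continuity argument on $[0,T)$ controlling $\|\eta\|_{L^\infty_tH^1}$, a Strichartz norm of $\eta$, a local-smoothing norm $\|\eta\|_{L^2_t\Sigma^{-k}}$ (available by Assumption \ref{ass:nonres} and the limiting absorption principle for $H$), and $\|\mathbf{z}^{\mathbf{m}}\|_{L^2_t}$ for each $\mathbf{m}\in\mathbf{R}_{\min}$. To close it, I would subtract from $\eta$ the resonant piece $\eta_{\mathrm{res}}:=-\sum_{\mathbf{m}\in\mathbf{R}_{\min}}\mathbf{z}^{\mathbf{m}}R_H^+(\mathbf{m}\cdot\boldsymbol\omega)G_{\mathbf{m}}$, with $R_H^+$ the outgoing resolvent at the positive energy $\mathbf{m}\cdot\boldsymbol\omega$, and derive a Lyapunov identity of the shape
\begin{equation*}
\frac{d}{dt}\Bigl(\sum_j|z_j|^2+\langle\eta-\eta_{\mathrm{res}},A(\eta-\eta_{\mathrm{res}})\rangle\Bigr)=-\sum_{\mathbf{m}\in\mathbf{R}_{\min}}c_{\mathbf{m}}|\mathbf{z}^{\mathbf{m}}(t)|^2+\text{error},
\end{equation*}
where $c_{\mathbf{m}}$ is a positive multiple of the spherical integral in \eqref{eq:FGR} and is strictly positive by Assumption \ref{ass:FGR}. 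Integrating yields $\mathbf{z}^{\mathbf{m}}\in L^2_t(\R_+)$ for every $\mathbf{m}\in\mathbf{R}_{\min}$, and by \eqref{est:R} this places $\mathcal R(\mathbf{z})\in L^1_t\Sigma^0$; a standard Duhamel/Strichartz argument then gives $\eta(t)=e^{-\im tH}P_c\eta_+ +o_{H^1}(1)$, and the wave operator for $(H,-\Delta)$ rewrites this in the required form $e^{\im t\Delta}\eta_+$. The $L^2_t$ decay of every cross monomial $\mathbf{z}^{\mathbf{m}}$ (which necessarily mixes at least two of the $z_j$) forces all but one $|z_j|$ to tend to zero, selecting the branch $j$ of Corollary \ref{cor:smallbddst}; conservation of mass together with \eqref{est:R} then give $|z_j(t)|\to\rho_+$ and the two-sided bound on $\rho_+^2+\|\eta_+\|_{H^1}^2$.

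The main obstacle is the Fermi Golden Rule step itself: one has to simultaneously extract the sign-definite dissipative term from the oscillatory Duhamel integrals $\int_0^t e^{-\im(t-s)H}P_c\,\mathbf{z}(s)^{\mathbf{m}}G_{\mathbf{m}}\,ds$ via the distorted Fourier transform of $G_{\mathbf{m}}$ restricted to the resonant sphere $|\zeta|^2=\mathbf{m}\cdot\boldsymbol\omega>0$, and absorb all error terms—those quadratic or higher in $\eta$, those arising from the modulation of $\varpi_j(|\mathbf{z}|^2)$, and $\mathcal R(\mathbf{z})$—either into the dissipation or into Strichartz/local-smoothing bounds for $\eta$. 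The gain over \cite{CM15APDE} is precisely that the refined profile has already pre-diagonalized all non-resonant interactions, so the FGR is applied in one shot rather than after a sequence of Birkhoff normal form changes of coordinates.
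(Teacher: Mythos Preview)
Your sketch captures the large-scale architecture—modulation by the refined profile, extraction of the resonant piece $\eta_{\mathrm{res}}$ (the paper's $Z$), and an FGR-driven Lyapunov inequality closing a continuity argument—but it diverges from the paper's actual route in a way that leaves the key cancellation step underspecified.

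The paper does \emph{not} work directly with the coupled $(\dot{\mathbf{z}},\partial_t\eta)$ system obtained by substituting the ansatz. It first passes to Darboux coordinates: the decomposition is $u=\phi(\mathbf{z})+\eta$ with $\eta\in P_cL^2$ (the fixed linear spectral projection, not your $\mathbf{z}$-dependent symplectic orthogonality), and a near-identity map $\varphi$ is built so that $\varphi^*\Omega_0=\Omega_1$ with $\Omega_1$ block-diagonal in $(\mathbf{z},\eta)$. The analysis is then Hamiltonian for $K=E\circ\varphi$. Two Cancellation Lemmas, both one-line consequences of $\phi(\mathbf{z}(t))$ solving \eqref{eq:nlsforce}, give $\nabla_\eta K(\mathbf{z},0)=\sum_{\mathbf{R}_{\min}}\mathbf{z}^{\mathbf{m}}G_{\mathbf{m}}+\text{h.o.t.}$ and $(1+A(\mathbf{z}))\nabla_{\mathbf{z}}E(\phi(\mathbf{z}))=\Lambda(|\mathbf{z}|^2)\mathbf{z}+O(\sum_{\mathbf{R}_{\min}}|\mathbf{z}^{\mathbf{m}}|)$. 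The Lyapunov functional is $E(\phi(\mathbf{z}))$, and $\frac{d}{dt}E(\phi(\mathbf{z}))=\{E(\phi(\mathbf{z})),K\}$ uses the anti-symmetry of the Poisson bracket and the block structure of $\Omega_1$ to kill $\{E(\phi(\mathbf{z})),E(\eta)\}$ outright and reduce to $\sum(\boldsymbol{\omega}\cdot\mathbf{m})\langle\im\mathbf{z}^{\mathbf{m}}G_{\mathbf{m}},\eta\rangle+\text{error}$, where the error is not merely small but carries the specific combinatorial bound $\lesssim\|\mathbf{z}\|\sum_{\mathbf{R}_{\min}}|\mathbf{z}^{\mathbf{m}}|$ (note that $z_1^{\ell}$ alone fails this for every $\ell$).

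Your Lyapunov identity ``of the shape'' with $\sum_j|z_j|^2$ and an unspecified $A$ hides exactly the work done by the Second Cancellation Lemma and the symplectic block structure. In a direct approach, $\dot z_j$ comes from inverting the Gram matrix of $\{\partial_{z_k}\phi,\partial_{\bar z_k}\phi\}$ against the full right-hand side, and you must explain why the resulting error in $\frac{d}{dt}\sum|z_j|^2$ is dominated by $\|\eta\|_{\Sigma^{-s}}\sum_{\mathbf{R}_{\min}}|\mathbf{z}^{\mathbf{m}}|$ rather than by, say, $\|\mathbf{z}\|^{2}\|\eta\|$ or pure powers of $\|\mathbf{z}\|$—errors of the latter type do not close against the dissipation. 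The paper's Darboux/Hamiltonian framework delivers this structural bound essentially for free from \eqref{eq:nlsforce}; your sketch does not indicate a substitute mechanism, and this is precisely the place where, in earlier work, a sequence of normal-form transformations was required.
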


%


The organization of the paper is the following.
In the rest of this section, we  outline  the proof of the main theorem (Theorem \ref{thm:main}).
In Section \ref{sec:dar}, we introduce the modulation and Darboux coordinate and compute the Taylor expansion of the energy.
In section \ref{sec:prmain} we prove the main theorem (Theorem \ref{thm:main}).
In section \ref{sec:ref_profile} we prove Proposition \ref{prop:rp}.
In section \ref{sec:Dar1}, we state an abstract Darboux theorem with error estimate and apply it to prove Proposition \ref{prop:Darcor}.
In the appendix of this paper, we prove Lemma \ref{lem:RminNR1isfinite}.

We now  outline   the proof of Theorem \ref{thm:main}.
First of all,  the fact that NLS \eqref{nls} is   Hamilton  is crucial.
Indeed, when we consider the symplectic form
\begin{align}\label{def:ssymp}
\Omega_0(\cdot,\cdot):=\<\im \cdot,\cdot\>,  \ \<u,v\>:=\mathrm{Re}  (u,\overline{v}) \text{ where }  (u, {v}):= \int_{\R^3}u(x) {v(x)}\,dx,
\end{align}
and the energy (Hamiltonian) by
\begin{align}\label{def:energ}
E(u)=\frac{1}{2}\<Hu,u\> + \frac{1}{2}\int_{\R^3}G(|u(x)|^2)\,dx,
\end{align}
where $G(s):=\int_0^s g(s)\,ds$, we can rewrite NLS \eqref{nls} as
\begin{align*}
\partial_t u = X_E^{(0)}(u).
\end{align*}
Here, for $F\in C^1(H^1,\R)$,  $X_F^{(0)}$ is the Hamilton vector field of $F$ associated to the symplectic form $\Omega_0$ defined, for $DF$ is the Fr\'echet derivative of $F$, by
\begin{align*}
\Omega_0(X_F^{(0)},\cdot) = DF.
\end{align*}
Next, as usual for the study of stability of solitons, we give a modulation coordinates in $H^1$ in the neighborhood of $0$.
In this paper, we use
\begin{align}\label{eq:modnew}
(\mathbf{z},\eta)\mapsto u=\phi(\mathbf{z})+\eta,
\end{align}
while in \cite{CM15APDE}
we were using
\begin{align}\label{eq:modold}
(\mathbf{z},\eta)\mapsto u=\sum_{j=1,...,N} \phi_j(z_j)+R(\mathbf{z})\eta,
\end{align}
for specific near identity operator $R(\mathbf{z})$  which was first introduced in \cite{GNT}.
Here, in both \eqref{eq:modnew} and \eqref{eq:modold}, $\eta$ is taken from the continuous component of $H$.
That is, $P_c\eta =\eta$,  where
\begin{align}\label{eq:contproj}
P_c u:= u-\sum_{j=1,...,N}\(\<u,\phi_j\>\phi_j+\<u,\im \phi_j\> \im\phi_j\).
\end{align}
The difference between the two coordinates \eqref{eq:modnew} and \eqref{eq:modold} is that in \eqref{eq:modnew} we are using the refined profile which takes into account the nonlinear interactions within the  discrete modes. While the discrete part in
\eqref{eq:modnew} is more complicate than in \eqref{eq:modold}, to prove Theorem \ref{thm:main} for $N>1$ we do not need the $R(\mathbf{z})$ in front of $\eta$.


Unfortunately, even though $\Omega_0$ is a deceptively simple symplectic form, in the coordinates \eqref{eq:modnew} it is complicated (it is very complicated also using coordinates \eqref{eq:modold}).
We thus  introduce a new symplectic form
\begin{align}\label{eq:omega1}
\Omega_1(\cdot,\cdot):=\Omega _0(D_{\mathbf{z}}\phi (\mathbf{z}) D\mathbf{z}\ \cdot ,D_{\mathbf{z}}\phi (\mathbf{z}) D\mathbf{z}\ \cdot ) + \Omega _0(D\eta\ \cdot,D\eta \cdot) ,
\end{align}
which is equal to $\Omega_0$ at $u=0$.
Here, $D_{\mathbf{z}}$ is the Fr\'echet derivative w.r.t.\ the $\mathbf{z}$ variable.

By Darboux theorem  there exists near 0  an almost  identity coordinate change   $\varphi$ such that $\Omega_1=\varphi^* \Omega_0$.
In Sect. \ref{sec:Dar1}  we give a  rather simple proof of  the  type of Darboux theorem needed, viewing it in an   abstract framework
simplifying the analogous part of \cite{CM15APDE}.

For $K=\varphi^* E$, the system becomes
\begin{align*}
\im \partial_t\mathbf{z} =(1+\mathcal{O}(\|\mathbf{z}\|^2))\nabla_\mathbf{z}K,\quad \im \partial_t \eta =\nabla_\eta K,
\end{align*}
where $\nabla_{\mathbf{z}}$ and $\nabla_\eta$ are the gradient corresponding to the Fr\'echet derivative w.r.t.\ $\mathbf{z}$ and $\eta$.
In the new coordinates, the energy $K$  expands
\begin{align*}
K = E(\phi(\mathbf{z}))+E(\eta)+\<\widetilde{\mathcal{R}}(\mathbf{z}),\eta\>+\mathrm{error}.
\end{align*}
When using  the coordinate system \eqref{eq:modold}, in order to estimate the solutions it is necessary
like in     \cite{CM15APDE}  to make further normal forms changes of variables. But using coordinates \eqref{eq:modnew}
we are ready for the estimates and there is no need of normal forms. First of all,  we have $\widetilde{\mathcal{R}}(\mathbf{z}) = \sum_{\mathbf{m}\in \mathbf{R}_{\mathrm{min}}}\mathbf{z}^{\mathbf{m}}G_{\mathbf{m}}+\mathrm{error}$, see  the First Cancelation Lemma, Lemma \ref{lem:cancel}.
This implies that
\begin{align}\label{eq:etaintro}
\im \partial_t\eta =H\eta +P_c g(|\eta|^2)\eta +\sum_{\mathbf{m}\in \mathbf{R}_{\mathrm{min}}}\<\mathbf{z}^{\mathbf{m}}G_{\mathbf{m}},\eta\> +\mathrm{error}.
\end{align}
Thus, by the endpoint Strichartz estimate, to show that $\eta$ scatters it suffices to show $\mathbf{z}^\mathbf{m} \in L^2(\R ) $     for $\mathbf{m}\in \mathbf{R}_{\mathrm{min}}$. To check this point, we consider
\begin{align*}
\frac{d}{dt}E(\phi(\mathbf{z}))=\sum_{\mathbf{m}\in \mathbf{R}_{\mathrm{min}}}\left\{E(\phi(\mathbf{z})),\<\mathbf{z}^{\mathbf{m}}G_{\mathbf{m}},\eta\>\right\}+\mathrm{error},
\end{align*}
where $\{\cdot,\cdot\}$ is the Poisson bracket associated to $\Omega_1$.  We obtain
\begin{align} \left\{E(\phi(\mathbf{z})),\<\mathbf{z}^{\mathbf{m}}G_{\mathbf{m}},\eta\>\right\} = (\boldsymbol{\omega}\cdot \mathbf{m})\<\im \mathbf{z}^{\mathbf{m}}G_{\mathbf{m}},\eta\>+\mathrm{error}, \label{miracle}
\end{align}
where, see below \eqref{eq:FGR:poisson1} and as a consequence of the Second Cancelation Lemma, Lemma \ref{lem:cancel2},
\begin{align*}| \mathrm{error} |\lesssim |\mathbf{z}|   \sum_{\mathbf{m}\in\mathbf{R}_{\mathrm{min}}}  |\mathbf{z}^\mathbf{m} |  \text{ for all $|\mathbf{z}|\le 1$}.
\end{align*}
Notice that   $z_1 ^\ell$ does not satisfy this inequality no matter how large we take $\ell \in \mathbb{N}$, so the error term in
\eqref{miracle} is not just small, but has a specific combinatorial structure.  In \cite{CM15APDE}, to get the structure \eqref{eq:etaintro} and
to bound $\mathbf{z}$, a painstaking normal forms argument was required, but here these fact come for free.

From this point on, the proof ends in a standard way.  Since    $\eta\sim -\mathbf{z}^\mathbf{m}(H-\omega\cdot \mathbf{m}-\im 0)^{-1}G_\mathbf{m}$,
 where the latter  is the solution of \eqref{eq:etaintro} without the nonlinear term and "error", we have, omitting errors
\begin{align*}
\frac{d}{dt}E(\phi(\mathbf{z}))=\sum_{\mathbf{m}\in \mathbf{R}_{\mathrm{min}}} (\boldsymbol{\omega}\cdot \mathbf{m})|\mathbf{z}^\mathbf{m}|^2\<\im G_{\mathbf{m}},(H-\omega\cdot \mathbf{m}-\im 0)^{-1}G_\mathbf{m}\>.
\end{align*}
Since $\<\im G_{\mathbf{m}},(H-\omega\cdot \mathbf{m}-\im 0)^{-1}G_\mathbf{m}\>$ equals  \eqref{eq:FGR} in Assumption \ref{ass:FGR} which we have assumed positive, this above idealized identity yields
\begin{align*}
 E(\phi(\mathbf{z}(t)))+ \sum_{\mathbf{m}\in \mathbf{R}_{\mathrm{min}}} \|  \mathbf{z}^\mathbf{m} \| ^2 _{L^2(0,t)}\le  E(\phi(\mathbf{z}(0))).
\end{align*}
Using this, we can close estimates.

We conclude with a few comments on   refined profiles,  which  play a central role in our proof. One of the distinctive features
of our system is the existence or non existence of small quasi--periodic solutions which are not periodic. Sigal \cite{Sigal93CMP}
stated their absence, and this follows from \cite{CM15APDE} and our analysis here. The $\mathbf{z}^{\mathbf{m}}G_{\mathbf{m}}$ terms
in $\widetilde{\mathcal{R}}(\mathbf{z})$  are resonant, cannot be eliminated from the equation exactly if \eqref{eq:FGR} holds and are an obstruction to the existence of quasi--periodic solutions. On the other hand, there are no resonant terms in the  discrete NLS with $N=2$, where  quasi-periodic solutions are proved to exist in    Maeda \cite{Maeda17SIMA}. Furthermore,  in Maeda \cite{Maeda17SIMA}  an equivalence  is observed  between being able to see  quasi--periodic solutions, absence of resonant terms in the equations and, finally,  existence of coordinate systems where the mixed term $\langle\widetilde{\mathcal{R}}(\mathbf{z}),\eta\rangle$, that is nonlinear degree 1 in $\eta$,   is absent from
the energy.
Our main insight here is that,  since there are no small  quasi--periodic solutions, we might try to replace them with a     surrogate  (refined profiles),  in the  expectation  of    an equivalence, analogous to that considered in  Maeda \cite{Maeda17SIMA}, between
this surrogate   and optimal coordinate systems.
This works and,
while in  \cite{CM15APDE} we searched directly, and with great effort, for the coordinates, here
we find, with a relatively elementary method, the refined profiles. Starting from  the  refined profiles  we define
a natural coordinate system. It turns out that these coordinates are optimal, as is seen in elementary fashion noticing
that the fact that the refined profiles are approximate solutions  of   \eqref{nls}, specifically they solve   \eqref{eq:nlsforce},   provides us the two Cancelation Lemmas, which in turn guarantee that our coordinates are optimal.
We end remarking that  refinements of the ansatz were already   in the great series by Merle and Raphael \cite{ MR03GAFA,MR04,MR05AM,MR4}, which has   inspired   our   notion of refined profile.


\section{Darboux coordinate and Energy expansion}\label{sec:dar}

We start from constructing the modulation coordinate.
First, we have the following.
\begin{lemma}\label{lem:lincor}
For any $s\in \R$ there exist  $\delta_s>0$ and $\mathbf z\in C^\infty (   \mathcal{B}_{  \Sigma^{-s}}  (0, \delta_s)  , \C^N)$ s.t.
\begin{align*}
u-\phi(\mathbf{z}(u)) \in P_c\Sigma^{-s},
\end{align*}
where $P_c$ is given by \eqref{eq:contproj}.
\end{lemma}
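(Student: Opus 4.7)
\textit{Plan of proof.} The condition $u-\phi(\mathbf{z})\in P_c\Sigma^{-s}$ is, by the definition of $P_c$ in \eqref{eq:contproj} and the reality of the eigenfunctions $\phi_j$, equivalent to the $2N$ scalar equations
\[
F_{j}(\mathbf{z},u):=\<u-\phi(\mathbf{z}),\phi_j\>+\im\,\<u-\phi(\mathbf{z}),\im\phi_j\>=0,\qquad j=1,\dots,N.
\]
The plan is to produce $\mathbf{z}(u)$ by a direct application of the implicit function theorem to the map $F=(F_1,\dots,F_N):\C^N\times\Sigma^{-s}\to\C^N$, viewed as a map $\R^{2N}\times\Sigma^{-s}\to\R^{2N}$ under the identifications $z_k=x_k+\im y_k$ and $F_j\leftrightarrow(\mathrm{Re}\,F_j,\mathrm{Im}\,F_j)$.

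The first step is to check that $F$ is of class $C^\infty$ in a neighbourhood of $(0,0)$. Since by Proposition \ref{prop:gam} every $\phi_j$ lies in $\Sigma^\infty$, the functionals $\<\,\cdot\,,\phi_j\>$ and $\<\,\cdot\,,\im\phi_j\>$ extend continuously to all of $\Sigma^{-s}$ for arbitrary $s\in\R$, so the $u$-dependence is bounded linear. Proposition \ref{prop:rp} gives $\phi\in C^\infty(\mathcal{B}_{\C^N}(0,\delta_0),\Sigma^0)$, and composition with the continuous embedding $\Sigma^{0}\hookrightarrow\Sigma^{-s}$ (valid for every $s\in\R$) shows that the $\mathbf{z}$-dependence is smooth as well. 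Jointly, $F\in C^\infty$.

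The second step is the Jacobian computation at the origin. From \eqref{eq:Phianz} one has $\phi(\mathbf{z})=\mathbf{z}\cdot\boldsymbol{\phi}+O(\|\mathbf{z}\|^2)$ in $\Sigma^\infty$, hence the real-linear partials at $0$ are $\partial_{x_k}\phi(0)=\phi_k$ and $\partial_{y_k}\phi(0)=\im\phi_k$. Because $\|\phi_j\|_{L^2}=1$ and eigenfunctions of $H$ associated to distinct eigenvalues are $L^2$-orthogonal, one checks
\[
\<\phi_k,\phi_j\>=\<\im\phi_k,\im\phi_j\>=\delta_{jk},\qquad \<\im\phi_k,\phi_j\>=\<\phi_k,\im\phi_j\>=0.
\]
Therefore $F(0,0)=0$ and the Jacobian $D_{(x,y)}F(0,0)=-\mathrm{Id}_{\R^{2N}}$, which is invertible.

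The implicit function theorem now yields some $\delta_s>0$ and a map $\mathbf{z}\in C^\infty(\mathcal{B}_{\Sigma^{-s}}(0,\delta_s),\C^N)$ with $\mathbf{z}(0)=0$ and $F(\mathbf{z}(u),u)\equiv 0$, which is exactly the stated conclusion. No serious obstacle is anticipated; the only point requiring a brief justification is the smooth dependence of $\phi$ on $\mathbf{z}$ as a map into $\Sigma^{-s}$ when $s$ is negative, and this is immediate from Proposition \ref{prop:rp} combined with $\Sigma^\infty\hookrightarrow\Sigma^{-s}$.
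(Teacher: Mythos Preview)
Your proof is correct and follows essentially the same route as the paper's: set up the map $F=(F_1,\dots,F_N)$ encoding the orthogonality conditions, check smoothness, compute the Jacobian at the origin from $\phi(\mathbf{z})=\mathbf{z}\cdot\boldsymbol{\phi}+o(\|\mathbf{z}\|)$ (which uses $\psi_{\mathbf m}(0)=0$ from Proposition~\ref{prop:rp}), and invoke the implicit function theorem; the only cosmetic differences are your sign convention (yielding $-\mathrm{Id}$ rather than $+\mathrm{Id}$) and your explicit passage to real coordinates. One harmless slip: the embedding $\Sigma^{0}\hookrightarrow\Sigma^{-s}$ holds only for $s\ge 0$, but this does not affect the argument since the pairings $\langle\phi(\mathbf{z}),\phi_j\rangle$ only require $\phi(\mathbf{z})\in L^2$, and for $s<0$ one has $\Sigma^{-s}\hookrightarrow\Sigma^{0}$ anyway.
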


\begin{proof}
	This is an immediate consequence of the implicit function theorem. We consider
	\begin{align*} F_j (\mathbf{z},u)=
	\< \phi(\mathbf{z} ) -u, \phi _j\> + \im \< u-\phi(\mathbf{z} ) , \im \phi _j\>  \text{ for }j=1,...,N.
	\end{align*}
	We have $F:=( F_1,...,F_N)\in C^\infty ( \Sigma^{-s} \times \mathcal{B} _{\C ^N}(0,\delta _0)  , \C ^{N})$  for $\delta _0>0$ given in Proposition \ref{prop:rp}.  Obviously $\left . F \right | _{(z,u)=(0,0)} =0$ and from
	$ \psi_{\mathbf{m}}(0)=0$ for all $ \mathbf{m}\in \mathbf {NR}_1$ , it follows $\left . D_\mathbf{z} F \right | _{(\mathbf{z},u)=(0,0)} = \mathrm{Id}_{\C ^N}$, where $D_\mathbf{z}F$ is the Fr\'echet derivative w.r.t.\ the $\mathbf{z}$ variable. By implicit function theorem
	we obtain the desired $\mathbf z\in C^\infty (   \mathcal{B}_{  \Sigma^{-s}}  (0, \delta_s)  , \C^N)$ for some $\delta_s>0$.
\end{proof}

By Lemma \ref{lem:lincor}, we have our first (modulation) coordinate.
\begin{proposition}\label{prop:lincor}
For any $s\in \R$ there exist  $\delta_s>0$ s.t.\ the map
\begin{align}\label{eq:modcor}
\mathcal{B}_{\C^N}(0,\delta_s)\times \mathcal{B}_{P_c X^{-s}}(0,\delta_s) \ni (\mathbf{z},\eta)\mapsto \phi(\mathbf{z})+\eta \in X^{-s}, \ X^s=\Sigma^s\text{ or }H^s,
\end{align}
is a $C^\infty$ local diffeomorphism.
Moreover, we have
\begin{align*}
\|u\|_{X^s}\sim_s \|\mathbf{z}\| + \|\eta\|_{X^s}.
\end{align*}
\end{proposition}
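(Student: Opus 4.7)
The plan is to build an explicit inverse of the map $\Phi(\mathbf{z},\eta):=\phi(\mathbf{z})+\eta$ directly from Lemma~\ref{lem:lincor}, and then to read off the norm equivalence by Taylor--expanding $\phi$ at the origin. Since $\phi\in C^\infty(\mathcal{B}_{\C^N}(0,\delta_s),\Sigma^s)\hookrightarrow C^\infty(\mathcal{B}_{\C^N}(0,\delta_s),X^{-s})$ by Proposition~\ref{prop:rp}, the map $\Phi$ is $C^\infty$, so what remains is to invert it locally and control norms.

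For the inverse, I set $\Psi(u):=\(\mathbf{z}(u),\,u-\phi(\mathbf{z}(u))\)$, where $\mathbf{z}(\cdot)$ is the $C^\infty$ map furnished by Lemma~\ref{lem:lincor}. The same implicit function theorem argument used to prove Lemma~\ref{lem:lincor} applies verbatim with $\Sigma^{-s}$ replaced by $H^{-s}$, because each pairing $\<\cdot,\phi_j\>$, $\<\cdot,\im\phi_j\>$ used to define $F_j$ extends continuously to $H^{-s}$ thanks to $\phi_j\in\Sigma^\infty\subset H^s$. By construction $u-\phi(\mathbf{z}(u))\in P_cX^{-s}$, so $\Psi$ lands in the correct domain. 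The identity $\Phi\circ\Psi=\mathrm{id}$ is immediate. The identity $\Psi\circ\Phi=\mathrm{id}$ follows from the local uniqueness of $\mathbf{z}$ in Lemma~\ref{lem:lincor}: if $u=\phi(\mathbf{z})+\eta$ with $\eta\in P_cX^{-s}$, then $\mathbf{z}$ already solves the equations $F_j=0$, forcing $\mathbf{z}(u)=\mathbf{z}$ and hence $u-\phi(\mathbf{z}(u))=\eta$.

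For the norm equivalence, I use that $\psi_\mathbf{m}(0)=0$ for all $\mathbf{m}\in\mathbf{NR}_1$ together with the smoothness of $\mathbf{z}\mapsto\phi(\mathbf{z})$ in $\Sigma^s$ (hence in $X^{-s}$) to write
\begin{align*}
\phi(\mathbf{z})=\mathbf{z}\cdot\boldsymbol{\phi}+O_{X^{-s}}(\|\mathbf{z}\|^2).
\end{align*}
The upper estimate $\|u\|_{X^{-s}}\lesssim \|\mathbf{z}\|+\|\eta\|_{X^{-s}}$ is then just the triangle inequality. For the reverse direction, since $\eta\in P_cX^{-s}$ the pairings $\<\eta,\phi_j\>$ and $\<\eta,\im\phi_j\>$ both vanish, so pairing $u$ against $\phi_j$ and $\im\phi_j$ recovers $\mathrm{Re}\,z_j$ and $-\mathrm{Im}\,z_j$ modulo a $O(\|\mathbf{z}\|^2)$ error. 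Absorbing this quadratic remainder for $\delta_s$ small yields $\|\mathbf{z}\|\lesssim \|u\|_{X^{-s}}$, whence $\|\eta\|_{X^{-s}}\leq \|u\|_{X^{-s}}+\|\phi(\mathbf{z})\|_{X^{-s}}\lesssim \|u\|_{X^{-s}}$.

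The only real obstacle is bookkeeping: one must fix $\delta_s$ small enough that both the implicit function theorem of Lemma~\ref{lem:lincor} applies uniformly in the two Banach settings $X^{-s}=\Sigma^{-s}$ and $X^{-s}=H^{-s}$, and the quadratic remainder in the Taylor expansion of $\phi$ does not spoil the absorption step in the lower bound. Both points follow immediately from the smoothness produced by Proposition~\ref{prop:rp} and from $\phi_j\in\Sigma^\infty$.
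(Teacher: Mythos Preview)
Your proof is correct and follows exactly the approach the paper intends: the paper's own proof is the single sentence ``It is a direct consequence of Lemma~\ref{lem:lincor}'', and you have simply spelled out that consequence by constructing the inverse $\Psi(u)=(\mathbf{z}(u),u-\phi(\mathbf{z}(u)))$ and reading off the norm equivalence from $\phi(\mathbf{z})=\mathbf{z}\cdot\boldsymbol{\phi}+O(\|\mathbf{z}\|^2)$. A trivial sign slip: pairing with $\im\phi_j$ yields $+\mathrm{Im}\,z_j$ rather than $-\mathrm{Im}\,z_j$, but this is irrelevant to the bound.
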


\begin{proof}
	It is an direct consequence of Lemma \ref{lem:lincor}.
\end{proof}

For Banach spaces $X,Y$, we set $\mathcal L(X,Y)$ to be the Banach space of all bounded linear operators from $X$ to $Y$.
Moreover, we set $\mathcal L(X):=\mathcal L(X,X)$.

For $F\in C^1(\mathcal{B}_{H^1}(0,\delta),\R)$, we write
\begin{align*}
F(\mathbf{z},\eta):=F(\phi(\mathbf{z})+\eta).
\end{align*}
We define $D_\eta F(\mathbf{z},\eta) \in C(\mathcal{B}_{H^1}(0,\delta),\mathcal L(P_c H^1,\R))$ and $\nabla_\eta F(\mathbf{z},\eta)\in C(\mathcal{B}_{H^1}(0,\delta),P_c H^{-1})$ by
\begin{align*}
\forall Y\in P_c H^1,\ D_\eta F(\mathbf{z},\eta)Y=\<\nabla_\eta F(\mathbf{z},\eta),Y\>:=\left.\frac{d}{d \epsilon}\right|_{\epsilon=0}F(\mathbf{z},\eta+\epsilon v).
\end{align*}
Here, for Banach spaces $A,B$, $\mathcal L(A,B)$ is the Banach space of all bounded operators from $A$ to $B$.
Similarly, we define $\nabla_{\mathbf{z}} F(u)=\nabla_{\mathbf{z}} F(\mathbf{z},\eta) \in C(\mathcal{B}_{H^1}(0,\delta),\C^N)$ by
\begin{align*}
\forall \mathbf w\in \C^N,\ \<\nabla_{\mathbf{z}} F(\mathbf{z},\eta), \mathbf w\>_{\C^N} :=D_\mathbf{z}F(\mathbf{z},\eta)\mathbf{w}=\left.\frac{d}{d \epsilon}\right|_{\epsilon=0}F(\mathbf{z}+\epsilon \mathbf w, \eta),
\end{align*}
where $\<\mathbf{w}_1,\mathbf{w}_2\>_{\C^N}=\Re \sum_{j=1}^Nw_{1j}\overline{w_{2j}}$ for $\mathbf{w}_k=(w_{k1},\cdots,w_{kN})$.

Using the above notations, for $u\in B_{H^1}(0,\delta)$ and $Y\in H^1$, we have
\begin{align}\label{eq:chain}
DF(\mathbf{z},\eta)Y =\<\nabla_{\mathbf{z}}F(\mathbf{z},\eta), D\mathbf{z}Y\>_{\C^N}+D_\eta F(\mathbf{z},\eta) D\eta Y,
\end{align}
where $D\mathbf{z}$ and $D\eta$ are Fr\'echet derivatives of functions $\mathbf{z}(u)$, $\eta(u):=u-\phi(\mathbf{z}(u))$.

Notice that, since the Fr\'echet derivative of the identity map $u\mapsto u$ is an identity, we have
\begin{align}\label{eq:id}
\mathrm{Id}_{X^s}=Du=D_{\mathbf{z}}\phi (\mathbf{z}) D\mathbf{z} +D\eta.
\end{align}

\begin{remark}
	Even though $\eta =P_c \eta$,
	$D\eta$ is not $P_c$ except at $u=0$.
\end{remark}

By \eqref{eq:id}, we have
\begin{align*}
\Omega_0=\Omega _0(D_{\mathbf{z}}\phi (\mathbf{z}) D\mathbf{z} ,D_{\mathbf{z}}\phi (\mathbf{z}) D\mathbf{z} ) + \Omega _0(D\eta ,D\eta )   + \Omega_0 (D_{\mathbf{z}}\phi (\mathbf{z}) D\mathbf{z} ,D\eta ) + \Omega _0 (D\eta , D_{\mathbf{z}}\phi (\mathbf{z}) D\mathbf{z} )  .
\end{align*}
Therefore, removing the cross terms (the latter two terms), we have the symplectic form $\Omega_1$ given in \eqref{eq:omega1}.
Given $F \in C^1(\mathcal{B}_{H^1}(0,\delta),\R)$, the Hamilton vector field $X_F^{(1)}$ associated to the symplectic form $\Omega_1$ is defined by $\Omega_1(X_F^{(1)},\cdot)=DF$.
Thus, by \eqref{eq:chain}, we have
\begin{align}\label{eq:sym11}
\<\im D_{\mathbf{z}}\phi (\mathbf{z}) D\mathbf{z} X_F^{(1)},D_{\mathbf{z}}\phi (\mathbf{z}) D\mathbf{z} Y\>+\<\im D\eta X_F^{(1)},D\eta Y\>=\<\nabla_{\mathbf{z}}F, D\mathbf{z}Y\>_{\C^N}+D_\eta F D\eta Y.
\end{align}
In particular, we have
\begin{align}\label{Hamv1eta}
\im D\eta X_F^{(1)}=\nabla_\eta F.
\end{align}
We turn to $\mathbf{z}$. Setting $\psi(\mathbf{z}):=\sum_{\mathbf{m} \in \mathbf{NR}_1}\mathbf{z}^\mathbf{m}\psi_\mathbf{m}(|\mathbf{z}|^2)$, we have $\phi(\mathbf{z})=\mathbf{z}\cdot \boldsymbol{\phi}+\psi(\mathbf{z})$ with $\|\psi(\mathbf{z})\|_{\Sigma^s}\lesssim_s \|\mathbf{z}\|^3$.
Then, since $\nabla_{\mathbf{z}}\phi(\mathbf{z})\mathbf{w}=\mathbf{w}\cdot\boldsymbol{\phi}+\mathcal{O}_{\mathcal{L}(\C^N,\Sigma^s)}(\|\mathbf{z}\|^2)\mathbf{w}$, $\<\im \mathbf{w}_1\cdot\boldsymbol{\phi}, \mathbf{w}_2\cdot\boldsymbol{\phi}\>=\<\im \mathbf{w}_1,\mathbf{w}_2\>_{\C^N}$ and $\mathcal L(\C^N\times \C^N,\R)\simeq \mathcal L(\C^N)$, we see there exists $\widetilde{A} \in C^\infty(\mathcal{B}_{\C^N}(0,\delta_0),\mathcal L(\C^N))$ s.t.
\begin{align*}
\<\im D_{\mathbf{z}}\phi (\mathbf{z}) D\mathbf{z} X_F^{(1)},D_{\mathbf{z}}\phi (\mathbf{z}) D\mathbf{z} Y\>=\< \im\( 1+ \widetilde{A}(\mathbf{z}) \)D\mathbf{z} X_F^{(1)},D\mathbf{z} Y\>_{\C^N},
\end{align*}
with $\|\widetilde{A}(\mathbf{z})\|_{\mathcal L(\C^N)}\lesssim \|\mathbf{z}\|^2$.
Thus, setting $A \in C^\infty(\mathcal{B}_{\C^N}(0,\delta_0),\mathcal L(\C^N))$ by $1+A(\mathbf{z})=(1+\widetilde{A}(\mathbf{z}))^{-1}$, we have $\|A(\mathbf{z})\|_{\mathcal{L}(\C^N)}\lesssim \|\mathbf{z}\|^2$ and
\begin{align}\label{eq:odez}
\im D\mathbf{z}X_F^{(1)}=(1+A(\mathbf{z}))\nabla_{\mathbf{z}}F.
\end{align}

The following proposition allows us to move to the ``diagonalized" symplectic form $\Omega_1$.

\begin{proposition}\label{prop:Darcor}
For any  $s>0$  there exists $\delta_s>0$   and  $\varphi\in C^\infty(\mathcal{B}_{\Sigma^{-s}}(0,\delta_s),\Sigma^{-s})$   satisfying
\begin{align}\label{eq:Darcor1}
\|\varphi(u)-u\|_{\Sigma^s} \le C_s \|\mathbf{z}(u)\|^2 \|\eta(u)\|_{\Sigma^{-s}}
\end{align}
which is  a  local diffeomorphism  and such that
\begin{align*}
\varphi^* \Omega _0=\Omega_1.
\end{align*}
\end{proposition}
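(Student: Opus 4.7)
The natural approach is Moser's deformation trick (path method). From the identity $\mathrm{Id} = D_{\mathbf{z}}\phi(\mathbf{z}) D\mathbf{z} + D\eta$ in \eqref{eq:id} one computes
\begin{equation*}
\Omega_0 - \Omega_1 \;=\; \Omega_0\bigl(D_{\mathbf{z}}\phi\, D\mathbf{z}\,\cdot,\; D\eta\,\cdot\bigr) + \Omega_0\bigl(D\eta\,\cdot,\; D_{\mathbf{z}}\phi\, D\mathbf{z}\,\cdot\bigr),
\end{equation*}
i.e.\ the cross term in the $(\mathbf z,\eta)$ decomposition. At $u=0$ the ranges of $D_{\mathbf{z}}\phi$ and $D\eta$ are $\mathrm{span}(\boldsymbol{\phi},\im\boldsymbol{\phi})$ and $P_c\Sigma^{-s}$, which are $\Omega_0$-orthogonal, so $\Omega_0 - \Omega_1$ vanishes at $u=0$. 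Using $\phi(\mathbf{z}) = \mathbf{z}\cdot\boldsymbol{\phi} + O_{\Sigma^s}(\|\mathbf{z}\|^3)$ from Proposition \ref{prop:rp}, $\Omega_0 - \Omega_1$ in fact vanishes to order $\|\mathbf{z}\|^2$, with coefficients in smooth ($\Sigma^s$-valued) pairings.

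The second step is to find a primitive. Both $\Omega_0$ (constant) and $\Omega_1$ (built by pulling back a constant form via the graph of $\phi$) are closed, so $\Omega_1-\Omega_0$ is closed on the contractible neighborhood at hand. The \emph{$\eta$-radial} homotopy $h_\tau(\mathbf{z},\eta):=(\mathbf{z},\tau\eta)$, $\tau\in[0,1]$, combined with the Poincaré homotopy formula, produces a primitive
\begin{equation*}
\alpha_u \;=\; \int_0^1 h_\tau^{*}\bigl(i_{(0,\eta)}(\Omega_1 - \Omega_0)\bigr)\, d\tau
\end{equation*}
satisfying $\|\alpha_u\|_{(\Sigma^{-s})^{*}} \lesssim_s \|\mathbf{z}(u)\|^2\,\|\eta(u)\|_{\Sigma^{-s}}$. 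Contracting only in $\eta$ (and not also in $\mathbf{z}$) is essential: it is what makes $\alpha$ linear in $\eta$ while keeping the quadratic vanishing in $\mathbf{z}$, exactly matching the form of \eqref{eq:Darcor1}.

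Now apply Moser. Set $\Omega_\tau:=\Omega_0+\tau(\Omega_1-\Omega_0)$ for $\tau\in[0,1]$; each $\Omega_\tau$ is closed and, since $\Omega_\tau|_{u=0}=\Omega_0$ is nondegenerate, remains nondegenerate on a small neighborhood. Define the time-dependent vector field $X_\tau$ by $i_{X_\tau}\Omega_\tau=-\alpha$. The bound on $\alpha$ together with uniform invertibility of $\Omega_\tau$ on the relevant spaces yields $\|X_\tau(u)\|_{\Sigma^s}\lesssim_s\|\mathbf{z}(u)\|^2\|\eta(u)\|_{\Sigma^{-s}}$; note the smoothing from $\Sigma^{-s}$ to $\Sigma^s$, which is inherited from the $\Sigma^\infty$-regularity of $D_{\mathbf{z}}\phi$. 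If $\varphi_\tau$ is the flow of $X_\tau$, the standard identity $\tfrac{d}{d\tau}\varphi_\tau^{*}\Omega_\tau = \varphi_\tau^{*}(\mathcal L_{X_\tau}\Omega_\tau+\dot\Omega_\tau) = \varphi_\tau^{*}\bigl(d(i_{X_\tau}\Omega_\tau)+(\Omega_1-\Omega_0)\bigr)=0$ gives $\varphi_\tau^{*}\Omega_\tau=\Omega_0$. Setting $\varphi:=\varphi_1$ produces the desired diffeomorphism, and \eqref{eq:Darcor1} follows from $\varphi(u)-u=\int_0^1 X_\tau(\varphi_\tau(u))\,d\tau$ after checking that $\|\mathbf{z}(\varphi_\tau(u))\|$ and $\|\eta(\varphi_\tau(u))\|_{\Sigma^{-s}}$ stay comparable to the values at $\tau=0$.

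The principal obstacle is the Banach-space bookkeeping rather than the geometric idea: establishing existence and smoothness of the flow of $X_\tau$ on the scale $\Sigma^{-s}$, producing a bounded inverse of $\Omega_\tau$ as an operator (not only on the finite-dimensional $\mathbf{z}$-factor but also on $P_c$ after restriction to the correct dual pairing), and propagating the quantitative bound along the flow without losing regularity. Packaging these ingredients into an \emph{abstract} Moser-Darboux theorem on a pair of Banach spaces in duality is precisely the role of Section \ref{sec:Dar1}; once that abstract statement is in hand, the present proposition follows by verifying its hypotheses from the structural computation of $\Omega_0-\Omega_1$ and the primitive $\alpha$ above.
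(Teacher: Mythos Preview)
Your approach is essentially the paper's: Moser's deformation, packaged as the abstract Darboux theorem of Section~\ref{sec:Dar1}, applied after exhibiting a primitive of $\Omega_0-\Omega_1$ with the bound $\|\mathbf{z}\|^2\|\eta\|_{\Sigma^{-s}}$. Two small differences are worth noting. First, the paper does not use the Poincar\'e homotopy: it simply writes the primitive down by inspection as $F=2^{-1}\Omega_0(D_{\mathbf{z}}\phi(\mathbf{z})D\mathbf{z}\,\cdot,\eta)$, then observes the cancellation $(D_{\mathbf{z}}\phi(\mathbf{z})D\mathbf{z})^*\im\eta=(D_{\mathbf{z}}\phi(\mathbf{z})D\mathbf{z}-D_{\mathbf{z}}(\mathbf{z}\cdot\boldsymbol{\phi})D\mathbf{z})^*\im\eta$ (since $\eta\in P_cL^2$) to get the $\|\mathbf{z}\|^2$ factor; your $\eta$-radial homotopy recovers essentially the same $F$. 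Second, your path $\Omega_\tau=\Omega_0+\tau(\Omega_1-\Omega_0)$ yields $\varphi_1^*\Omega_1=\Omega_0$, the reverse of what is claimed, so you must set $\varphi:=\varphi_1^{-1}$ (or, as the paper does, run the path from $\Omega_1$ to $\Omega_0$).
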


We give the proof of Proposition \ref{prop:Darcor} in section \ref{sec:Dar1}.
It will be a direct consequence of an abstract Darboux theorem with error estimate (Proposition \ref{prop:darboux}).

We study the dynamics of $u^*=\varphi^{-1}(u)$, where $u$ is the solution of NLS \eqref{nls} with $\|u(0)\|_{H^1}\ll1$, which reduces to the study of the dynamics of $\mathbf{z}(u^*)$ and $\eta(u^*)$.
Since $u(t)$ is the integral curve of the Hamilton vector field $X_E^{(0)}$, $u^*(t)$ is the integral curve of the Hamilton vector field $X_K^{(1)}$, where $K:=\varphi^*E=E(\varphi(\cdot))$.
By \eqref{Hamv1eta}, \eqref{eq:odez}, we have
\begin{align}\label{eq:etaz}
\im \partial_t \eta =\nabla_\eta K(\mathbf{z},\eta),\quad  \im \partial_t \mathbf{z} =(1+A(\mathbf{z}))\nabla_{\mathbf{z}} K(\mathbf{z},\eta).
\end{align}
To compute the r.h.s.\ of \eqref{eq:etaz}, we expand $K$.
Before going into the expansion, we prepare a notation  to denote some reminder terms.
\begin{definition}\label{def:err_1}
	Let $F \in C^1(\mathcal{B}_{H^1}(0,\delta) ,\R)$ for some $\delta>0$.
	We write $F=\mathcal R_1$ if, for $s\geq 0$, there exists $\delta_s>0$ s.t.\ for $\|u\|_{H^1}<\delta_s$  we have
	\begin{align}
	\| \nabla_\eta F(u)\|_{\Sigma^s} + \| \nabla_{\mathbf{z}} F (u) \|&\lesssim_s  \|u\|_{H^1}^2\(\|\eta\|_{\Sigma^{-s}}+\sum_{\mathbf{m}\in \mathbf{R}_{\mathrm{min}}}|\mathbf{z}^{\mathbf{m}}|\).\label{eq:def:err_1}
	\end{align}
	In our notation, if $F=\mathcal R_1$ and $G=\mathcal R_1$, we will have $F+G=\mathcal R_1$.
	So, an equation like $F+\mathcal R_1 =\mathcal R_1$ will not mean $F=0$ but only $F=\mathcal R_1$.
	This rule will also be applied to $\mathcal{R}_2$ below.
\end{definition}

By Taylor expanding $F(s,t)= K(s\mathbf{z},t\eta ) $, we have
\begin{align} &
K(\mathbf{z},\eta )=K(0,\eta)+K(\mathbf{z},0)+  \int _0^1\partial_s\partial_t  K(s\mathbf{z},0)\,ds+\int_0^1\int_0^1 (1-t) \partial_s\partial_t^2 K(s\mathbf{z},t\eta)\,dtds    .\label{eq:exp_K1}
\end{align}
Since $\varphi (\eta )=\eta$ by \eqref{eq:Darcor1}, we have $   K(0,\eta )= E(\eta)$. Similarly, since $\varphi(\phi(\mathbf{z}))=\phi(\mathbf{z})$, we have $K(\mathbf{z},0)= E(\phi(\mathbf{z}))$.
The third term of the r.h.s.\ of \eqref{eq:exp_K1} is
\begin{align*}
\int _0^1\partial_s\partial_t  K(s\mathbf{z},0)\,ds=\partial_tK(\mathbf{z},0)=\<\nabla_\eta K(\mathbf{z},0),\eta\>,
\end{align*}
because $D_\eta K(0,0)=0$.
The following lemma is the crux   of this paper.
\begin{lemma}[First Cancellation Lemma]\label{lem:cancel}
We have, near the origin,
\begin{align}\label{eq:cancel1}
\nabla_\eta K \(\mathbf{z},0\)=P_c D\varphi(\phi(\mathbf{z}))^* \(\sum_{\mathbf{m}\in \mathbf{R}_{\mathrm{min}}}\mathbf{z} ^{\mathbf{m}} G_{\mathbf{m}}+\mathcal R(\mathbf{z} )\).
\end{align}
\end{lemma}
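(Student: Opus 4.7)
The plan is to unfold $K = E \circ \varphi$ by the chain rule, substitute the refined profile equation \eqref{eq:nlsforce} for $\nabla E(\phi(\mathbf{z}))$, and then invoke the Darboux identity $\varphi^{*}\Omega_{0} = \Omega_{1}$ together with the block--diagonal form \eqref{eq:omega1} of $\Omega_1$ to kill the contribution coming from the intrinsic modulation of $\mathbf{z}$. What remains is exactly $\sum \mathbf{z}^{\mathbf{m}} G_{\mathbf{m}} + \mathcal{R}(\mathbf{z})$ wrapped by $P_c D\varphi(\phi(\mathbf{z}))^{*}$.

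First, because \eqref{eq:Darcor1} with $\eta(\phi(\mathbf{z})) = 0$ forces $\varphi(\phi(\mathbf{z})) = \phi(\mathbf{z})$, the chain rule applied to $K(\mathbf{z}, \eta) = E(\varphi(\phi(\mathbf{z}) + \eta))$ gives
\[
\nabla_\eta K(\mathbf{z}, 0) = P_c D\varphi(\phi(\mathbf{z}))^{*} \nabla E(\phi(\mathbf{z})).
\]
Next, evaluating \eqref{eq:nlsforce} at $t = 0$ and using $\nabla E(u) = Hu + g(|u|^2) u$, I obtain
\[
\nabla E(\phi(\mathbf{z})) = i A + \sum_{\mathbf{m}\in \mathbf{R}_{\mathrm{min}}} \mathbf{z}^{\mathbf{m}} G_{\mathbf{m}} + \mathcal{R}(\mathbf{z}), \qquad A := D_{\mathbf{z}}\phi(\mathbf{z})\bigl[(-i\varpi_j(|\mathbf{z}|^2) z_j)_j\bigr].
\]
Therefore the lemma reduces to verifying $\langle i A, D\varphi(\phi(\mathbf{z})) Y\rangle = 0$ for every $Y \in P_c H^1$.

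For this cancellation I differentiate $\varphi(\phi(\mathbf{z}')) = \phi(\mathbf{z}')$ in $\mathbf{z}'$ to get $D\varphi(\phi(\mathbf{z})) A = A$, which together with $\varphi^{*}\Omega_0 = \Omega_1$ rewrites $\langle iA, D\varphi Y\rangle = \Omega_0(D\varphi A, D\varphi Y) = \Omega_1(A, Y)$. By \eqref{eq:omega1} and \eqref{eq:id}, this equals $\Omega_0(D_{\mathbf{z}}\phi\, D\mathbf{z}(A), D_{\mathbf{z}}\phi\, D\mathbf{z}(Y)) + \Omega_0(D\eta(A), D\eta(Y))$. Since $A$ lies in the range of $D_{\mathbf{z}}\phi(\mathbf{z})$ one has $D\eta(A) = 0$, killing the second summand. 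The first summand vanishes once I show $D\mathbf{z}(Y) = 0$ at $u = \phi(\mathbf{z})$ for $Y \in P_c H^1$; differentiating the implicit relation $F_j(\mathbf{z}(u), u) \equiv 0$ from Lemma \ref{lem:lincor} in direction $Y$ yields $D_{\mathbf{z}} F \cdot D\mathbf{z}(Y) = -D_u F \cdot Y$, and a direct unpacking shows $D_u F \cdot Y = 0$ for $Y \in P_c$ (each component is a real linear combination of $\langle Y, \phi_j\rangle$ and $\langle Y, i\phi_j\rangle$, both vanishing on $P_c$). Since $D_{\mathbf{z}} F$ is invertible near $\mathbf{z} = 0$, this forces $D\mathbf{z}(Y) = 0$.

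The main obstacle is precisely this last verification: it is the genuine decoupling of the $(\mathbf{z}, \eta)$--coordinates at $\eta = 0$, encoded by $D\mathbf{z}(Y) = 0$ for $Y \in P_c$, that makes the subspaces $D_{\mathbf{z}}\phi(\mathbf{z})\,\mathbb{C}^N$ and $P_c H^1$ symplectically orthogonal for $\Omega_1$ at $u = \phi(\mathbf{z})$, and this orthogonality, transported through the Darboux map $\varphi$, is what cancels the $iA$ contribution. Once this point is in hand the rest is pure bookkeeping, and in particular no further normal form argument of the type in \cite{CM15APDE} is required, which is exactly the advantage of the refined profile coordinates \eqref{eq:modnew}.
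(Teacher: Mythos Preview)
Your proof is correct, and it takes a genuinely different route from the paper's.

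The paper argues dynamically: it builds a time--dependent Hamiltonian $E_{\mathbf{z}_0}(u,t) = E(u) - \langle \sum_{\mathbf{m}} \mathbf{z}_0(t)^{\mathbf{m}} G_{\mathbf{m}} + \mathcal{R}(\mathbf{z}_0(t)), u\rangle$ for which $\phi(\mathbf{z}_0(t))$ is an \emph{exact} integral curve, pulls this back through $\varphi$, and then reads off the identity from the fact that on this trajectory $\eta\equiv 0$ forces $\nabla_\eta(\varphi^*E_{\mathbf{z}_0})|_{\eta=0}=0$. Your argument is static and algebraic: you apply the chain rule directly to $K=E\circ\varphi$, substitute \eqref{eq:nlsforce} for $\nabla E(\phi(\mathbf{z}))$, and then eliminate the modulation term $\im A$ by showing, via $\varphi^*\Omega_0=\Omega_1$ and the block--diagonal form \eqref{eq:omega1}, that $\mathrm{Ran}\,D_{\mathbf{z}}\phi(\mathbf{z})$ and $P_cH^1$ are $\Omega_1$--orthogonal at $u=\phi(\mathbf{z})$. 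Both approaches rest on the same three ingredients---$\varphi(\phi(\mathbf{z}))=\phi(\mathbf{z})$, the refined profile equation, and the Darboux relation---but package them differently. Your version makes the role of the symplectic block structure completely explicit and is arguably more elementary; the paper's version is slicker, avoids the explicit $D\mathbf{z}(Y)=0$ verification, and its template carries over verbatim to the Second Cancellation Lemma \ref{lem:cancel2}.
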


\begin{proof}
We fix arbitrary $\mathbf{z}_0=(z_{01},\cdots,z_{0N}) \in \mathcal{B} _{\C^N} (0, \delta _0 )$ with $\delta_0$ sufficiently small.
It is enough to prove \eqref{eq:cancel1} with $\mathbf{z}=\mathbf{z}_0$.
We  set $\mathbf{z}_0(t)=(z_{01}(t),\cdots,z_{0N}(t))\in \C^N$ with$$z_{0j}(t)=e^{-\varpi_j(|\mathbf{z}_0|^2)t}z_{0j},$$
 where $\varpi_j$ is also given in Proposition \ref{prop:rp}.
Consider the non-autonomous Hamiltonian
\begin{align*}
E_{\mathbf{z}_0}(u,t):=E(u)-\sum_{\mathbf{m}\in \mathbf{R}_{\mathrm{min}}}\<\mathbf{z}_0(t) ^{\mathbf{m}} G_{\mathbf{m}},u\>-\<\mathcal R(\mathbf{z}_0(t)),u\>.
\end{align*}
Then,  the Hamilton vector field $X_{E_{\mathbf{z}_0}}^{(0)}(u,t)$ of  $E_{\mathbf{z}_0}(u,t)$ associated with the symplectic form $\Omega_0$ is
\begin{align*}
\im X_{E_{\mathbf{z}_0}}^{(0)}(u,t) =     H u + g(|u|^2)u -\sum_{\mathbf{m}\in \mathbf{R}_{\mathrm{min}}}\mathbf{z}_0(t) ^{\mathbf{m}} G_{\mathbf{m}} -\mathcal R(\mathbf{z}_0(t)) \ .
\end{align*}
Thus,  by Proposition  \ref{prop:rp},   $\phi(\mathbf{z}_0(t))$ is   the  integral curve of this flow with initial value  $ \phi(\mathbf{z}_0)$. 

\noindent Consider now  the pullback of $E_{\mathbf{z}_0}(u,t)$ by  the $\varphi$ of Proposition \ref{prop:Darcor}.
By Taylor expansion we get
\begin{align*}
&\varphi^* E_{\mathbf{z}_0}( u,t) = K(u) -\<\sum_{\mathbf{m}\in \mathbf{R}_{\mathrm{min}}}\mathbf{z}_0(t) ^{\mathbf{m}} G_{\mathbf{m}}+\mathcal R(\mathbf{z}_0(t)),\varphi(u)\> = \\&
K(u)-\<\sum_{\mathbf{m}\in \mathbf{R}_{\mathrm{min}}}\mathbf{z}_0(t) ^{\mathbf{m}} G_{\mathbf{m}}+\mathcal R(\mathbf{z}_0(t)),\phi(\mathbf{z})+D\varphi(\phi(\mathbf{z}))\eta+\int_0^1(1-s) D^2\varphi(\phi(\mathbf{z}+s\eta))(\eta,\eta)\>.
\end{align*}
Differentiating in $\eta $ at $\eta =0$, yields
\begin{align*}
&\left . \nabla_\eta  \( \varphi^* E_{\mathbf{z}_0}(  t) \) \right | _{\eta =0}= \left . \nabla_\eta K\right | _{\eta =0}  - P_c (D\varphi(\phi(\mathbf{z}))) ^*\(\sum_{\mathbf{m}\in \mathbf{R}_{\mathrm{min}}}\mathbf{z}_0(t) ^{\mathbf{m}} G_{\mathbf{m}}+\mathcal R(\mathbf{z}_0(t))\) .
\end{align*}
Because of \eqref{eq:Darcor1}, we know that  $\varphi^{-1} (\phi(\mathbf{z}))=\phi(\mathbf{z})$  for all $\mathbf{z}$.  Then, $\phi(\mathbf{z}_0)$  is an integral trajectory
also for $\varphi^* E_{\mathbf{z}_0}( u,t)$.  But since, in $(\mathbf{z},\eta )$, integral trajectories satisfy $\im \dot \eta =  \nabla_\eta \( \varphi^* E_{\mathbf{z}_0}(  t) \)$, form $\eta \equiv 0$ and thus from $\dot \eta \equiv 0$,  it follows
that $\left . \nabla_\eta \( \varphi^* E_{\mathbf{z}_0}(  t) \)\right | _{\eta =0} \equiv 0$.  So, for $t=0$, we obtain \eqref{eq:cancel1}.
\end{proof}

By Proposition \ref{prop:Darcor}, Definition \ref{def:err_1} and \eqref{eq:cancel1}, we have
\begin{align}\label{eq:eta1st}
\<\nabla_\eta K(\mathbf{z},0),\eta\>=\sum_{\mathbf{m}\in \mathbf{R}_{\mathrm{min}}}\<\mathbf{z} ^{\mathbf{m}} G_{\mathbf{m}},\eta\>+\mathcal{R}_1.
\end{align}
We next study the last term in r.h.s.\ of \eqref{eq:exp_K1}.
By direct computation, for the linear part of the energy  we have
\begin{align*}
&\partial_s\partial_t^2\<H\varphi(s\phi(\mathbf{z})+t\eta), \varphi(s\phi(\mathbf{z})+t\eta)\>=
 4\<HD^2\varphi(s\phi(\mathbf{z})+t\eta)(\phi,\eta),D\varphi(s\phi(\mathbf{z})+t\eta)\eta\>
 \\&\quad
 +2\<HD^2\varphi(s\phi(\mathbf{z})+t\eta)(\eta,\eta),D\varphi(s\phi(\mathbf{z})+t\eta)\phi\>
 +2\<HD^3\varphi(s\phi(\mathbf{z})+t\eta)(\phi,\eta,\eta),\varphi(s\phi(\mathbf{z})+\eta)\>.
\end{align*}
Thus,
\begin{align}\label{eq:enegykin2}
\frac{1}{2}\int_0^1\int_0^1 (1-t)  \partial_s\partial_t^2\<H\varphi(s\phi(\mathbf{z})+t\eta), \varphi(s\phi(\mathbf{z})+t\eta)\>\,dtds=\mathcal{R}_1.
\end{align}

For the nonlinear part of the energy, we have
\begin{align}
&\partial_s\partial_t^2\int_{\R^3}G(| u_{t,s} |^2)\,dx=
4\<2g'' u_{t,s}\(\mathrm{Re}\(\overline{u_{t,s}}\ \widetilde\eta\)\)^2+2g' \widetilde\eta \mathrm{Re}\(\overline{u_{t,s}}\ \widetilde\eta\)+g'u_{t,s}|\widetilde\eta|^2,\widetilde\phi\>\nonumber
\\&
\quad +2\<2g'u_{t,s}\mathrm{Re}\(u_{t,s}\overline{D^2\varphi(\eta,\eta)}\)+g D^2\varphi(\eta,\eta),\widetilde\phi\>
+4\<2g'u_{t,s}\mathrm{Re}\(\overline{u_{t,s}}\ \widetilde\eta\)+g \widetilde\eta,D^2\varphi (\phi(\mathbf{z}),\eta)\>\nonumber\\&
\quad +2\<g u_{t,s},D^3\varphi(\phi(\mathbf{z}),\eta,\eta)\>.\label{eq:expandR2}
\end{align}
where $u_{t,s}:=\varphi(s\phi(\mathbf{z})+t\eta)$, $\widetilde \eta=D\varphi(s\phi(\mathbf{z})+t\eta)\eta$, $\widetilde\phi=D\varphi(s\phi(\mathbf{z}+t\eta))\phi(\mathbf{z})$, $g^{(k)}=g^{(k)}(|u_{t,s}|^2)$ and $D^{k+1}\varphi=D^{k+1}\varphi(s\phi(\mathbf{z})+t\eta)$  for $k=0,1,2$.

To handle these terms, we introduce another notation of error terms.

\begin{definition}\label{def:err_2}
	Let $\delta>0$ and $F\in C^3(\mathcal{B}_{H^1}(0,\delta),\R)$.
	We write $F=\mathcal R_2$ if $F$  is a linear combination of  functions of the form
	\begin{align*}
	\int_0^1\int_0^1(1-t)\<f(u_{t,s}), \mathfrak{f}(u_{t,s})(\phi,\eta,\eta) \>\,dtds,
	\end{align*}
	where $f(u)(x)=\tilde f(\Re u(x),\Im u)$ with $\tilde f\in C^\infty(\R^2,\C)$  and where either one or the other of the following
two conditions are satisfied:
	\begin{itemize}
		\item[(I)] $|\tilde f(s_1,s_2)|\lesssim |s|\<s\>^2$, $|\partial_{s_j}\tilde f(s_1,s_2)|\lesssim \<s\>^2$ ($j=1,2$), $|\partial_{s_j}\partial_{s_k}\tilde f(s_1,s_2)|\lesssim \<s\> $ ($j,k=1,2$) and  $\mathfrak{f}(u)(\phi,\eta,\eta):=\(D\varphi(u)\phi\)\(D\varphi(u)\eta\)^2$;
		\item[(II)] $|\tilde f(s_1,s_2)|\lesssim |s|^2\<s\>^2$, $|\partial_{s_j}\tilde f(s_1,s_2)|\lesssim |s|\<s\>^2$ ($j=1,2$), $|\partial_{s_j}\partial_{s_k}\tilde f(s_1,s_2)|\lesssim \<s\>^2$ ($j,k=1,2$) and  $\mathfrak{f}(u)(\phi,\eta,\eta):=\(D\varphi(u)\phi\) D^2\varphi(u)(\eta,\eta)$ or $D\varphi(u)\eta D^2\varphi(u)(\phi,\eta)$ or $D^3\varphi(u)(\phi,\eta,\eta)$.
	\end{itemize}
Here, $s=(s_1,s_2)$ and $|s|=(s_1^2+s_2^2)^{1/2}$, $\<s\>=(1+s_1^2+s_2^2)^{1/2}$.
%
%
%
%
%
%
\end{definition}
Thus, we have
\begin{align}\label{eq:energynp}
\frac{1}{2}\int_0^1\int_0^1(1-t)\partial_s\partial_t^2\int_{\R^3}G(|\varphi(s\phi(\mathbf{z})+\eta)|^2)\,dx=\mathcal R_2.
\end{align}
We record that under the assumption $\|u\|_{H^1}\lesssim 1$, we have
\begin{align}\label{eq:est:DzR2}
\|\nabla_{\mathbf{z}}\mathcal{R}_2\|\lesssim \|u\|_{H^1}\|\eta\|_{L^6}^2.
\end{align}

Summarizing, \eqref{eq:exp_K1}, \eqref{eq:eta1st}, \eqref{eq:enegykin2} and \eqref{eq:energynp} we have
\begin{align}\label{eq:exp_K}
K(u)=E(\phi(\mathbf{z}))+E(\eta) + \sum_{\mathbf{m}\in \mathbf{R}_{\mathrm{min}}}\<\mathbf{z} ^{\mathbf{m}} G_{\mathbf{m}},\eta\>+\mathcal R_1+\mathcal R_2.
\end{align}

We can study the structure of $E(\phi(\mathbf{z}))$ by an argument similar to the proof of Lemma \ref{lem:cancel}.

\begin{lemma}[Second Cancellation Lemma]\label{lem:cancel2}
	We have
	\begin{align}
	(1+A(\mathbf{z}))\nabla_{\mathbf{z}}E(\phi(\mathbf{z}))=\Lambda(|\mathbf{z}|^2)\mathbf{z}+B(\mathbf{z}),\label{eq:cancel21}
	\end{align}
	where, $\Lambda(|\mathbf{z}|^2)\mathbf{w}:=(\varpi_1(|\mathbf{z}|^2)w_1,\cdots,\varpi_N(|\mathbf{z}|^2)w_N)$ and $\|B(\mathbf{z})\|\lesssim \sum_{\mathbf{m}\in\mathbf{R}_{\mathrm{min}}}|\mathbf{z}^\mathbf{m}|$.
\end{lemma}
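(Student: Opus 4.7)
My plan is to imitate, almost verbatim, the strategy used for the First Cancellation Lemma. Fix $\mathbf{z}_0\in\mathcal B_{\C^N}(0,\delta_0)$ and set $z_{0j}(t):=e^{-\im\varpi_j(|\mathbf{z}_0|^2)t}z_{0j}$, so $\dot{\mathbf{z}}_0(t)=-\im\Lambda(|\mathbf{z}_0|^2)\mathbf{z}_0(t)$. By Proposition \ref{prop:rp}, $\phi(\mathbf{z}_0(t))$ is the integral curve through $\phi(\mathbf{z}_0)$ of the non-autonomous Hamiltonian
\begin{align*}
E_{\mathbf{z}_0}(u,t):=E(u)-\Bigl\langle \sum_{\mathbf{m}\in\mathbf{R}_{\mathrm{min}}}\mathbf{z}_0(t)^{\mathbf{m}}G_{\mathbf{m}}+\mathcal R(\mathbf{z}_0(t)),\,u\Bigr\rangle
\end{align*}
with respect to $\Omega_0$. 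Because $\varphi$ fixes the submanifold $\{\eta=0\}$ by \eqref{eq:Darcor1}, the same curve is an integral trajectory of $\varphi^{*}E_{\mathbf{z}_0}$ with respect to $\Omega_1$, and in the $(\mathbf{z},\eta)$ coordinates it reads $(\mathbf{z}_0(t),0)$.

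Next I would plug this into the Hamilton equations in $(\mathbf z,\eta)$ coordinates. Since \eqref{eq:odez} is a coordinate statement (independent of the Hamiltonian), the trajectory satisfies
\begin{align*}
\im\dot{\mathbf{z}}_0(t)=\bigl(1+A(\mathbf{z}_0(t))\bigr)\,\nabla_{\mathbf{z}}\bigl(\varphi^{*}E_{\mathbf{z}_0}(\cdot,t)\bigr)(\mathbf{z}_0(t),0).
\end{align*}
The key algebraic simplification is that differentiating the identity $\varphi(\phi(\mathbf{z}))=\phi(\mathbf{z})$ (which holds because $\eta=0$ along the refined-profile submanifold) gives $D\varphi(\phi(\mathbf{z}))D_{\mathbf{z}}\phi(\mathbf{z})=D_{\mathbf{z}}\phi(\mathbf{z})$. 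Using this together with $K(\mathbf{z},0)=E(\phi(\mathbf{z}))$, a direct computation yields
\begin{align*}
\nabla_{\mathbf{z}}\bigl(\varphi^{*}E_{\mathbf{z}_0}(\cdot,t)\bigr)(\mathbf{z},0)=\nabla_{\mathbf{z}}E(\phi(\mathbf{z}))-(D_{\mathbf{z}}\phi(\mathbf{z}))^{*}\Bigl(\sum_{\mathbf{m}\in\mathbf{R}_{\mathrm{min}}}\mathbf{z}_0(t)^{\mathbf{m}}G_{\mathbf{m}}+\mathcal R(\mathbf{z}_0(t))\Bigr).
\end{align*}
Combining with $\im\dot{\mathbf{z}}_0(t)=\Lambda(|\mathbf{z}_0|^2)\mathbf{z}_0(t)\cdot\im/\im=\Lambda(|\mathbf{z}_0|^2)\mathbf{z}_0(t)$ and evaluating at $t=0$ gives \eqref{eq:cancel21} with
\begin{align*}
B(\mathbf{z}):=\bigl(1+A(\mathbf{z})\bigr)(D_{\mathbf{z}}\phi(\mathbf{z}))^{*}\Bigl(\sum_{\mathbf{m}\in\mathbf{R}_{\mathrm{min}}}\mathbf{z}^{\mathbf{m}}G_{\mathbf{m}}+\mathcal R(\mathbf{z})\Bigr).
\end{align*}

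Finally, the estimate on $B(\mathbf{z})$ is essentially automatic: $\|A(\mathbf{z})\|_{\mathcal L(\C^N)}\lesssim\|\mathbf{z}\|^2$ and $D_{\mathbf{z}}\phi(\mathbf{z})=\boldsymbol{\phi}+O(\|\mathbf{z}\|^2)$ are uniformly bounded on the relevant ball, so the $G_{\mathbf{m}}$ piece contributes $\lesssim\sum_{\mathbf{m}\in\mathbf{R}_{\mathrm{min}}}|\mathbf{z}^{\mathbf{m}}|$ directly, and the $\mathcal R$ piece is controlled by the crucial estimate \eqref{est:R}, giving $\|\mathcal R(\mathbf{z})\|_{\Sigma^s}\lesssim \|\mathbf{z}\|^2\sum_{\mathbf{m}\in\mathbf{R}_{\mathrm{min}}}|\mathbf{z}^{\mathbf{m}}|$, which is even better than needed. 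No substantial obstacle remains; the only subtle point is the observation that $\varphi$ preserves the refined-profile submanifold, which is precisely what allows the pullback argument to produce an exact (rather than approximate) identity.
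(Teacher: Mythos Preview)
Your proposal is correct and follows essentially the same argument as the paper: fix $\mathbf{z}_0$, use that $(\mathbf{z}_0(t),0)$ is an integral curve of $\varphi^*E_{\mathbf{z}_0}$ with respect to $\Omega_1$, read off the $\mathbf{z}$-component of the Hamilton equation via \eqref{eq:odez}, and evaluate at $t=0$. Your identity $(D_{\mathbf{z}}\phi(\mathbf{z}))^*\bigl(\sum\mathbf{z}_0^{\mathbf{m}}G_{\mathbf{m}}+\mathcal R(\mathbf{z}_0)\bigr)$ for the remainder is exactly the paper's $\nabla_{\mathbf{z}}\big|_{\mathbf{z}=\mathbf{z}_0}\langle\sum\mathbf{z}_0^{\mathbf{m}}G_{\mathbf{m}}+\mathcal R(\mathbf{z}_0),\phi(\mathbf{z})\rangle$ written in adjoint notation, and the bound is obtained the same way.
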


\begin{proof}
	Fix $\mathbf{z}_0\in \mathcal{B}_{\C^N}(0,\delta_0)$ and consider  $\mathbf{z}_0(t)$  and $E_{\mathbf{z}_0}(u,t)$
as in the proof of Lemma \ref{lem:cancel}. Then $(\mathbf{z}_0(t),0)$  is an integral curve of $\varphi ^*E_{\mathbf{z}_0}(u,t)$  and
	  for $t=0$ we have
	\begin{align*}
	\Lambda(|\mathbf{z}_0 |^2)\mathbf{z}_0 &=(1+A(\mathbf{z}_0 ))\left.\nabla_{\mathbf{z}}\right|_{\mathbf{z}=\mathbf{z}_0 ,\eta=0,t=0 } \( \varphi ^*E_{\mathbf{z}_0}(u,t) \)\\&=
	(1+A(\mathbf{z}_0))\left. \nabla_\mathbf{z}\right|_{\mathbf{z}=\mathbf{z}_0}\(E(\phi(\mathbf{z}))-
	\<\sum_{\mathbf{m}\in\mathbf{R}_{\mathrm{min}}}\mathbf{z}_0^{\mathbf{m}}G_\mathbf{m}+\mathcal{R}(\mathbf{z}_0),\phi(\mathbf{z})\>
	\).
	\end{align*}
This yields the equality \eqref{eq:cancel21}  at $\mathbf{z}=\mathbf{z}_0$ with the desired bound on the remainder term, thanks to
\begin{align*} &
	\left \|\left.\nabla_\mathbf{z}\right|_{\mathbf{z}=\mathbf{z}_0}\<\sum_{\mathbf{m}\in\mathbf{R}_{\mathrm{min}}}
\mathbf{z}_0^{\mathbf{m}}G_\mathbf{m}+\mathcal{R}(\mathbf{z}_0),\phi(\mathbf{z})\> \right \|   = \left \|\<\sum_{\mathbf{m}\in\mathbf{R}_{\mathrm{min}}}
\mathbf{z}_0^{\mathbf{m}}G_\mathbf{m}+\mathcal{R}(\mathbf{z}_0),\left.\nabla_\mathbf{z}\right|_{\mathbf{z}=\mathbf{z}_0}\phi(\mathbf{z})\> \right \|
\\& \le  \left \| \sum_{\mathbf{m}\in\mathbf{R}_{\mathrm{min}}}
\mathbf{z}_0^{\mathbf{m}}G_\mathbf{m}+\mathcal{R}(\mathbf{z}_0)\right  \| _{L^2(\R ^3) }   \| \left.\nabla_\mathbf{z}\right|_{\mathbf{z}=\mathbf{z}_0}\phi(\mathbf{z})  \| _{L^2 (\R ^3)}
\lesssim \sum_{\mathbf{m}\in\mathbf{R}_{\mathrm{min}}}|\mathbf{z}_0^\mathbf{m}|.
	\end{align*}
\end{proof}

\section{Proof of the main theorem}\label{sec:prmain}

Given an interval $I\subseteq \R$ we set
\begin{align*}
\mathrm{Stz}^j(I):=L^\infty_t  H^j (I)   \cap L^2_t  W^{j,6}(I),\quad \mathrm{Stz}^{*j}(I):=L^1_t  H^j(I )  +  L^2_t W^{j,6/5}(I ),\ j=0,1,
\end{align*}
where $H^0=L^2$ and $W^{0,p}=L^p$.
We will be using the Strichartz inequality, see  \cite{Y1}:
\begin{align*}
\|e^{-\im t H}P_c v\|_{\mathrm{Stz}^j}\lesssim \|v\|_{H^j},\ \|\int_0^t e^{-\im(t-s)H}f(s)\,ds\|_{\mathrm{Stz}^j}\lesssim \|f\|_{\mathrm{Stz}^{*j}}, \ j=0,1 .
\end{align*}
We now consider the Hamiltonian system   in the $(\mathbf{z},\eta )$ with Hamiltonian $K$ and symplectic form $\Omega _1$. Then we have the following.

 \begin{theorem}[Main Estimates]\label{thm:mainbounds}
There exist $\delta _0>0$ and $C_0>0$ s.t.\ if the constant  $\|u_0\|_{H^1}< \delta_0$      for $I= [0,\infty )$ and $C=C_0$    we have:
\begin{align}
   \|  \eta \| _{\mathrm{Stz}^1(I)} +\sum_{\mathbf{m}\in \mathbf{R}_{\mathrm{min}} }\|  \mathbf{z}^{\mathbf{m}} \| _{L^2_t(I)} &\le
  C   \|u_0\|_{H^1},
  \label{Strichartzradiation}\\
\| \mathbf{z}  \|
  _{W ^{1,\infty} _t  (I )} &\le
  C   \|u_0\|_{H^1}.
   \label{L^inftydiscrete}
\end{align}
Furthermore,  there exists $\rho  _+\in [0,\infty )^N$ s.t.\   there exist  a   $j_0$  with $\rho_{+j}=0$ for $j\neq j_0$,
and   there exists $\eta _+\in H^1$
with $\|  \eta _+\| _{H^1}\le C    \epsilon $  for $C=C_0$, such that
\begin{equation}\label{eq:small en31}
\begin{aligned}&     \lim_{t\to +\infty}\| \eta (t )-
e^{\im t\Delta }\eta  _+     \|_{H^1 }=0  \quad  , \quad
  \lim_{t\to +\infty} |z_j(t)|  =\rho_{+j}  .
\end{aligned}
\end{equation}

\end{theorem}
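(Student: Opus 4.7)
The plan is to run a standard continuity/bootstrap argument on the norm
\[
\mathcal{M}(T):=\|\eta\|_{\mathrm{Stz}^1([0,T])}+\sum_{\mathbf{m}\in\mathbf{R}_{\min}}\|\mathbf{z}^{\mathbf{m}}\|_{L^2_t([0,T])}+\|\mathbf{z}\|_{L^\infty_t([0,T])},
\]
assuming a priori $\mathcal{M}(T)\le 2C_0\|u_0\|_{H^1}$ on a maximal interval and then improving this to a constant strictly less than $2C_0\|u_0\|_{H^1}$, provided $\delta_0$ is chosen small. Combining \eqref{eq:etaz}, the expansion \eqref{eq:exp_K} and Lemma \ref{lem:cancel}, the $\eta$-equation reads
\[
\im\partial_t\eta=H\eta+P_c\,g(|\eta|^2)\eta+\sum_{\mathbf{m}\in\mathbf{R}_{\min}}P_c\mathbf{z}^{\mathbf{m}}G_{\mathbf{m}}+\nabla_\eta(\mathcal{R}_1+\mathcal{R}_2)+\text{cubic},
\]
where by Definitions \ref{def:err_1}--\ref{def:err_2} the last two contributions are controlled by a higher power of $\mathcal{M}(T)$. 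The endpoint Strichartz estimate then yields
\[
\|\eta\|_{\mathrm{Stz}^1}\lesssim \|\eta(0)\|_{H^1}+\sum_{\mathbf{m}}\|\mathbf{z}^{\mathbf{m}}\|_{L^2_t}\|G_{\mathbf{m}}\|_{H^1\cap L^{6/5}}+\mathcal{M}(T)^2+\mathcal{M}(T)^3,
\]
which improves the $\mathrm{Stz}^1$ part of $\mathcal{M}(T)$ once the $L^2_t$-bounds on the resonant monomials are in place.

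The crucial step is the quasi-monotonicity of $E(\phi(\mathbf{z}))$. Using the $\Omega_1$-Poisson bracket $\{\cdot,\cdot\}$, the Hamiltonian equations \eqref{eq:etaz} together with \eqref{eq:exp_K} give
\[
\frac{d}{dt}E(\phi(\mathbf{z}))=\sum_{\mathbf{m}\in\mathbf{R}_{\min}}\{E(\phi(\mathbf{z})),\langle\mathbf{z}^{\mathbf{m}}G_{\mathbf{m}},\eta\rangle\}+\text{err},
\]
and \eqref{miracle} combined with Lemma \ref{lem:cancel2} computes each bracket as $(\boldsymbol{\omega}\cdot\mathbf{m})\langle\im\mathbf{z}^{\mathbf{m}}G_{\mathbf{m}},\eta\rangle$ up to an error of size $|\mathbf{z}|\sum_{\mathbf{n}\in\mathbf{R}_{\min}}|\mathbf{z}^{\mathbf{n}}|$. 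Splitting $\eta$ via Duhamel's formula as a dispersive piece $e^{-\im tH}\eta(0)$, the quasi-resonant contribution $-\sum_{\mathbf{n}}\mathbf{z}^{\mathbf{n}}(H-\boldsymbol{\omega}\cdot\mathbf{n}-\im 0)^{-1}G_{\mathbf{n}}$, and an $L^2_tL^{6,2}$-small remainder $r$ obtained from the Strichartz bound on the nonlinear source, yields
\[
\frac{d}{dt}E(\phi(\mathbf{z}))\le -\sum_{\mathbf{m}\in\mathbf{R}_{\min}}c_{\mathbf{m}}|\mathbf{z}^{\mathbf{m}}|^2+\text{integrable errors},
\]
with
\[
c_{\mathbf{m}}=(\boldsymbol{\omega}\cdot\mathbf{m})\,\langle\im G_{\mathbf{m}},(H-\boldsymbol{\omega}\cdot\mathbf{m}-\im 0)^{-1}G_{\mathbf{m}}\rangle>0
\]
by Assumption \ref{ass:FGR}. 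Integrating in time closes the bootstrap on $\sum_{\mathbf{m}}\|\mathbf{z}^{\mathbf{m}}\|_{L^2_t}$, and then \eqref{L^inftydiscrete} follows from \eqref{eq:odez} and Lemma \ref{lem:cancel2} after integration.

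The main obstacle is controlling the error terms in the last displayed inequality: the cross term $\langle\mathbf{z}^{\mathbf{m}}G_{\mathbf{m}},r\rangle$ must be absorbed using Cauchy--Schwarz, exponential decay of $G_{\mathbf{m}}$ and Strichartz smallness of $r$, while the contribution bounded by $|\mathbf{z}|\sum_{\mathbf{n}}|\mathbf{z}^{\mathbf{n}}|$ has a very specific combinatorial form that forbids a purely $z_j^{\ell}$ behavior and can therefore be absorbed into $\mathcal{M}(T)\cdot\sum\|\mathbf{z}^{\mathbf{n}}\|_{L^2}^2$, which is small. All other error contributions reduce to cubic expressions in $\mathcal{M}(T)$ once the expansions of $\mathcal{R}_1,\mathcal{R}_2$ and the bounds of Lemma \ref{lem:cancel2} are substituted.

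Granted the bootstrap, the asymptotic statements in \eqref{eq:small en31} are relatively soft. The $L^2_t$-integrability of each $\mathbf{z}^{\mathbf{m}}$ together with the uniform Lipschitz control of $\mathbf{z}(t)$ coming from \eqref{eq:odez} forces $\mathbf{z}^{\mathbf{m}}(t)\to 0$ as $t\to\infty$ for every $\mathbf{m}\in\mathbf{R}_{\min}$. By the definition of $\mathbf{R}_{\min}$ and the non-resonance condition \eqref{eq:linind}, this is compatible with $|z_j(t)|$ converging to a nonzero limit only for a single index $j_0$, giving the vector $\rho_+$ with $\rho_{+j}=0$ for $j\neq j_0$; the existence of the individual limits $|z_j(t)|\to\rho_{+j}$ follows because \eqref{eq:odez} combined with Lemma \ref{lem:cancel2} shows $\frac{d}{dt}|z_j|^2$ is $L^1_t$. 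For the radiation part, the Strichartz bound on the nonlinear source in the $\eta$-equation makes $t\mapsto e^{\im tH}\eta(t)$ Cauchy in $H^1$ as $t\to\infty$, producing an $H^1$-limit $\tilde\eta_+$ with $\|\tilde\eta_+\|_{H^1}\lesssim \|u_0\|_{H^1}$; passing from $e^{-\im tH}$-scattering to $e^{\im t\Delta}$-scattering by composition with the wave operator for $H$ yields $\eta_+$ and completes the proof.
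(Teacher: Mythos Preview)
Your overall scheme coincides with the paper's: a continuity argument, Strichartz control of $\eta$, and the Fermi--Golden--Rule virial identity for $E(\phi(\mathbf{z}))$ to control $\sum_{\mathbf{m}}\|\mathbf{z}^{\mathbf{m}}\|_{L^2_t}$. The asymptotic statements at the end are also along the right lines.

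There is, however, a genuine gap in the step where you insert the quasi-resonant profile $-Z=-\sum_{\mathbf{n}}\mathbf{z}^{\mathbf{n}}R_+(\mathbf{n}\cdot\boldsymbol{\omega})P_cG_{\mathbf{n}}$ into $(\boldsymbol{\omega}\cdot\mathbf{m})\langle \im\mathbf{z}^{\mathbf{m}}G_{\mathbf{m}},\eta\rangle$. The diagonal terms $\mathbf{m}=\mathbf{n}$ give the good $-c_{\mathbf{m}}|\mathbf{z}^{\mathbf{m}}|^2$, but the off-diagonal terms with $\mathbf{m}\neq\mathbf{n}$, which you lump into ``integrable errors'', are only bounded pointwise by a constant times $|\mathbf{z}^{\mathbf{m}}||\mathbf{z}^{\mathbf{n}}|$. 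If you estimate them directly by Cauchy--Schwarz in time you obtain $\|\mathbf{z}^{\mathbf{m}}\|_{L^2_t}\|\mathbf{z}^{\mathbf{n}}\|_{L^2_t}\le C_0^2\|u_0\|_{H^1}^2$, and the resulting inequality $\sum_{\mathbf{m}}\|\mathbf{z}^{\mathbf{m}}\|_{L^2_t}^2\lesssim (1+C_0^2)\|u_0\|_{H^1}^2$ simply reproduces the bootstrap hypothesis and does not improve it. The paper fixes this (its $\mathcal R_5$, Lemma~\ref{lem:estR5}) by an integration by parts in time: since $(\mathbf{m}-\mathbf{n})\cdot\boldsymbol{\omega}\neq 0$, one writes $\mathbf{z}^{\mathbf{m}}\overline{\mathbf{z}^{\mathbf{n}}}=\big((\mathbf{m}-\mathbf{n})\cdot\boldsymbol{\omega}\big)^{-1}\big(\im\partial_t(\mathbf{z}^{\mathbf{m}}\overline{\mathbf{z}^{\mathbf{n}}})-\mathcal R_7\big)$ and thereby gains two extra powers of $\|u_0\|_{H^1}$, obtaining $\big|\int_I\mathcal R_5\,dt\big|\lesssim C_0^2\|u_0\|_{H^1}^4$, which \emph{does} close. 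This normal-form step is not optional and is missing from your sketch.

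Two smaller points. First, the claim that the error bounded by $|\mathbf{z}|\sum_{\mathbf{n}}|\mathbf{z}^{\mathbf{n}}|$ ``can therefore be absorbed into $\mathcal M(T)\cdot\sum\|\mathbf{z}^{\mathbf{n}}\|_{L^2}^2$'' does not parse: integrating $|\mathbf{z}|\sum|\mathbf{z}^{\mathbf{n}}|$ in time gives at best $\|\mathbf{z}\|_{L^\infty_t}\sum\|\mathbf{z}^{\mathbf{n}}\|_{L^1_t}$, which you do not control. In the paper this error is the term $\mathcal R_4=\sum_{\mathbf{m}}\langle\im\widetilde{a_{\mathbf{m}}}G_{\mathbf{m}},\eta\rangle$, which carries an extra factor of $\eta$; one bounds $\|\widetilde{a_{\mathbf{m}}}\|_{L^2_t}\lesssim C(C_0)\|u_0\|_{H^1}^3$ via Lemma~\ref{lem:cancel2} and then pairs with $\|\eta\|_{L^2_t\Sigma^{0-}}$. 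Second, your remainder $r$ (the paper's $\xi$) solves an equation that, in addition to the nonlinear source, contains the correction $\mathcal R_3=\sum_{\mathbf{m}}a_{\mathbf{m}}R_+(\mathbf{m}\cdot\boldsymbol{\omega})P_cG_{\mathbf{m}}$ coming from $\partial_t Z$. Controlling this in $L^2_t\Sigma^{0-}$ needs the local-decay estimate $\|e^{-\im tH}R_+(\mathbf{m}\cdot\boldsymbol{\omega})P_cf\|_{\Sigma^{0-}}\lesssim\langle t\rangle^{-3/2}\|f\|_{\Sigma^0}$ together with the bound $\|a_{\mathbf{m}}\|_{L^2_t}\lesssim C(C_0)\|u_0\|_{H^1}^3$ (Lemma~\ref{lem:esta}); a plain ``Strichartz bound on the nonlinear source'' does not cover it. It is precisely because $\|\xi\|_{L^2_t\Sigma^{0-}}\lesssim\|u_0\|_{H^1}+C(C_0)\|u_0\|_{H^1}^3$ carries no linear $\sum\|\mathbf{z}^{\mathbf{m}}\|_{L^2_t}$ term that the cross term $\langle\mathbf{z}^{\mathbf{m}}G_{\mathbf{m}},\xi\rangle$ contributes only $c_0C_0\|u_0\|_{H^1}^2$, which is what makes the bootstrap close.
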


Note that from the energy and mass conservation, Definitions \ref{def:err_1} and \ref{def:err_2}, \eqref{eq:est:DzR2}, \eqref{eq:exp_K} and Lemma \ref{lem:cancel2} and  we have the apriori bound
\begin{align*}
\|\mathbf{z}\|_{W^{1,\infty}_t(\R)}+\|\eta\|_{L^\infty_t H^1(\R)}\lesssim \|u_0\|_{H^1}.
\end{align*}

The proof that Theorem \ref{thm:mainbounds}  implies Theorem \ref{thm:main} is like in \cite{CM15APDE}.
Furthermore, by completely routine arguments  discussed in \cite{CM15APDE},
\eqref{Strichartzradiation} for  $I= [0,\infty )$ is a consequence of the following Proposition.

\begin{proposition}\label{prop:mainbounds} There exists  a  constant $c_0>0$ s.t.\
for any  $C_0>c_0$ there is a value    $\delta _0= \delta _0(C_0)   $ s.t.\  if    \eqref{Strichartzradiation}
holds  for $I=[0,T]$ for some $T>0$, for $C=C_0$  and for $u_0\in B_{H^1}(0,\delta_0)$,
then in fact for $I=[0,T]$  the inequalities  \eqref{Strichartzradiation} holds  for   $C=C_0/2$.
\end{proposition}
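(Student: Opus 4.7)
The plan is a standard continuity/bootstrap. Assuming \eqref{Strichartzradiation} on $[0,T]$ with constant $C_0$, we derive it with constant $C_0/2$ from an endpoint Strichartz estimate for $\eta$ coupled with an energy identity for $E(\phi(\mathbf{z}))$ that uses the Fermi Golden Rule. Set $\epsilon:=\|u_0\|_{H^1}$. Because $\Omega_1$ splits in $(\mathbf{z},\eta)$ and by the expansion \eqref{eq:exp_K}, the equation for $\eta$ coming from \eqref{eq:etaz} is
\begin{align*}
\im\partial_t\eta = H\eta + P_c g(|\eta|^2)\eta + \sum_{\mathbf{m}\in\mathbf{R}_{\mathrm{min}}}P_c\mathbf{z}^{\mathbf{m}} G_{\mathbf{m}} + \mathrm{Err}_\eta,
\end{align*}
where $\mathrm{Err}_\eta$ collects $\nabla_\eta\mathcal R_1+\nabla_\eta\mathcal R_2$ and the Darboux remainder from Lemma \ref{lem:cancel}. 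Applying endpoint Strichartz, the cubic bound $\|g(|\eta|^2)\eta\|_{\mathrm{Stz}^{*1}}\lesssim\|\eta\|_{\mathrm{Stz}^1}^3$, and the remainder bounds in Definitions \ref{def:err_1}--\ref{def:err_2} together with $G_\mathbf{m}\in\Sigma^\infty$, one obtains
\begin{align*}
\|\eta\|_{\mathrm{Stz}^1([0,T])} \le C\epsilon + C\sum_{\mathbf{m}\in\mathbf{R}_{\mathrm{min}}}\|\mathbf{z}^{\mathbf{m}}\|_{L^2_t([0,T])} + C\epsilon\Big(C_0\epsilon + \sum_{\mathbf{m}}\|\mathbf{z}^{\mathbf{m}}\|_{L^2_t}\Big),
\end{align*}
with $C$ independent of $C_0$ for $\epsilon$ small.

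The heart of the argument is the control of $\sum_\mathbf{m}\|\mathbf{z}^{\mathbf{m}}\|_{L^2_t}$ via the scalar $E(\phi(\mathbf{z}(t)))$. Since $\Omega_1$ splits between $\mathbf{z}$ and $\eta$, $\{E(\phi(\mathbf{z})),E(\eta)\}_{\Omega_1}=0$, and \eqref{eq:exp_K} yields
\begin{align*}
\frac{d}{dt}E(\phi(\mathbf{z}(t))) = \sum_{\mathbf{m}\in\mathbf{R}_{\mathrm{min}}}\{E(\phi(\mathbf{z})),\langle\mathbf{z}^{\mathbf{m}} G_{\mathbf{m}},\eta\rangle\}_{\Omega_1} + \{E(\phi(\mathbf{z})),\mathcal R_1+\mathcal R_2\}_{\Omega_1}.
\end{align*}
For the main term, the Second Cancellation Lemma gives $\dot{\mathbf{z}}=-\im\Lambda(|\mathbf{z}|^2)\mathbf{z}$ plus a remainder of size $\sum_{\mathbf{n}\in\mathbf{R}_{\mathrm{min}}}|\mathbf{z}^{\mathbf{n}}|+\|\eta\|_{\Sigma^{-s}}$, so the chain rule applied to $\mathbf{z}^{\mathbf{m}}$ reproduces identity \eqref{miracle} with an error bounded by $|\mathbf{z}|\sum_\mathbf{m}|\mathbf{z}^{\mathbf{m}}|(|\mathbf{z}^{\mathbf{m}}|+\|\eta\|_{\Sigma^{-s}})$, the same bound obtained for $\{E(\phi(\mathbf{z})),\mathcal R_1+\mathcal R_2\}_{\Omega_1}$ from Definitions \ref{def:err_1}--\ref{def:err_2}. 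Decompose $\eta=\eta_{\mathrm{res}}+\widetilde\eta$ with $\eta_{\mathrm{res}}:=-\sum_{\mathbf{n}\in\mathbf{R}_{\mathrm{min}}}\mathbf{z}^{\mathbf{n}}(H-\boldsymbol{\omega}\cdot\mathbf{n}-\im 0)^{-1}P_c G_{\mathbf{n}}$; the residual $\widetilde\eta$ solves the $\eta$-equation with cubic-plus-error source and free data, so $\|\widetilde\eta\|_{\mathrm{Stz}^1}=O(\epsilon)+O(\epsilon^2 C_0)$. Substituting, integrating over $[0,T]$, applying Plemelj's formula to the outgoing resolvent, and using $\boldsymbol{\omega}\cdot\mathbf{m}>0$ on $\mathbf{R}_{\mathrm{min}}$ together with Assumption \ref{ass:FGR}, we obtain
\begin{align*}
c_{\mathrm{FGR}}\sum_{\mathbf{m}\in\mathbf{R}_{\mathrm{min}}}\|\mathbf{z}^{\mathbf{m}}\|_{L^2_t([0,T])}^2 \le C\epsilon^2 + C\epsilon\Big(\sum_{\mathbf{m}}\|\mathbf{z}^{\mathbf{m}}\|_{L^2_t}^2 + C_0^2\epsilon^2\Big),
\end{align*}
with $c_{\mathrm{FGR}}>0$ a definite constant extracted from \eqref{eq:FGR}.

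Combining the two estimates, taking $C_0$ sufficiently large relative to the absolute constant $C$ and then $\epsilon$ so small that all cross terms weighted by $\epsilon C_0^\alpha$ are at most $1/4$ of the corresponding left-hand side, yields $\|\eta\|_{\mathrm{Stz}^1}+\sum_\mathbf{m}\|\mathbf{z}^{\mathbf{m}}\|_{L^2_t}\le (C_0/2)\epsilon$ on $[0,T]$. The main obstacle is verifying rigorously that the errors in \eqref{miracle} and in $\{E(\phi(\mathbf{z})),\mathcal R_1+\mathcal R_2\}_{\Omega_1}$ genuinely carry the multiplicative factor $|\mathbf{z}|\sum_\mathbf{m}|\mathbf{z}^{\mathbf{m}}|$; a generic cubic-in-$\mathbf{z}$ remainder such as $z_1^3$ would fail to be square-integrable in time under the bootstrap and the argument would collapse. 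This special structure is forced by the Second Cancellation Lemma, which constrains $(1+A(\mathbf{z}))\nabla_\mathbf{z} E(\phi(\mathbf{z}))$ up to a remainder of size $\sum_\mathbf{m}|\mathbf{z}^{\mathbf{m}}|$, together with the fine derivative estimates on $\mathcal R_1,\mathcal R_2$ in Definitions \ref{def:err_1}--\ref{def:err_2}; once these remainder structures are in place, the FGR positivity and the bootstrap closure are standard.
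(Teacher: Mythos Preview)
Your outline follows the paper's strategy closely: Strichartz for $\eta$, the Poisson-bracket identity for $\frac{d}{dt}E(\phi(\mathbf{z}))$, the substitution $\eta=\eta_{\mathrm{res}}+\widetilde\eta$ with $\eta_{\mathrm{res}}=-\sum_\mathbf{n}\mathbf{z}^{\mathbf{n}}R_+(\mathbf{n}\cdot\boldsymbol{\omega})P_cG_\mathbf{n}$, and the FGR positivity. Two points, however, are not handled and without them the bootstrap does not close.

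\emph{First, the norm on $\widetilde\eta$.} The boundary resolvent $R_+(\mathbf{n}\cdot\boldsymbol{\omega})P_cG_\mathbf{n}$ is not in $L^2(\R^3)$, so $\eta_{\mathrm{res}}\notin H^1$ and the quantity $\|\widetilde\eta\|_{\mathrm{Stz}^1}$ is not finite. The paper estimates $\xi:=\eta+Z=\widetilde\eta$ only in the local norm $L^2_t\Sigma^{0-}$ (Lemma \ref{lem:estxiL2}), which is enough to pair against $G_\mathbf{m}\in\Sigma^\infty$. The equation for $\xi$ contains the extra source $\mathcal R_3=\sum_\mathbf{m} a_\mathbf{m} R_+(\mathbf{m}\cdot\boldsymbol{\omega})P_cG_\mathbf{m}$ with $a_\mathbf{m}=\im\partial_t(\mathbf{z}^\mathbf{m})-(\mathbf{m}\cdot\boldsymbol{\omega})\mathbf{z}^\mathbf{m}$; to control $\int_0^t e^{-\im(t-s)H}P_c\mathcal R_3\,ds$ in $L^2_t\Sigma^{0-}$ one needs the smoothing bound $\|e^{-\im tH}R_+P_c f\|_{\Sigma^{0-}}\lesssim\langle t\rangle^{-3/2}\|f\|_{\Sigma^0}$ together with $\|a_\mathbf{m}\|_{L^2_t}\le C(C_0)\epsilon^3$ (Lemma \ref{lem:esta}). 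This is where the Second Cancellation Lemma is actually used, and it yields $\|\xi\|_{L^2_t\Sigma^{0-}}\lesssim \epsilon+C(C_0)\epsilon^3$, with leading constant \emph{independent of $C_0$}; that independence is what makes the $\mathcal R_6$-contribution only $c_0C_0\epsilon^2$ in Lemma \ref{lem:estR46}.

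\emph{Second, the off-diagonal terms.} After substituting $\eta_{\mathrm{res}}$ into $\sum_\mathbf{m}(\boldsymbol{\omega}\cdot\mathbf{m})\langle\im\mathbf{z}^\mathbf{m} G_\mathbf{m},\eta\rangle$ you obtain, besides the diagonal $-\sum_\mathbf{m}(\boldsymbol{\omega}\cdot\mathbf{m})|\mathbf{z}^\mathbf{m}|^2\langle\im G_\mathbf{m},R_+G_\mathbf{m}\rangle$, the cross terms $\mathcal R_5=-\sum_{\mathbf{m}\neq\mathbf{n}}(\boldsymbol{\omega}\cdot\mathbf{m})\langle\im\mathbf{z}^\mathbf{m} G_\mathbf{m},\mathbf{z}^\mathbf{n} R_+G_\mathbf{n}\rangle$. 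A naive Cauchy--Schwarz gives $\|\mathcal R_5\|_{L^1_t}\lesssim\big(\sum_\mathbf{m}\|\mathbf{z}^\mathbf{m}\|_{L^2_t}\big)^2\le C_0^2\epsilon^2$ with \emph{no} extra $\epsilon$, and this term, carrying $C_0^2$ at order $\epsilon^2$, destroys the closure (it competes with, and can dominate, $c_{\mathrm{FGR}}\sum_\mathbf{m}\|\mathbf{z}^\mathbf{m}\|_{L^2_t}^2$). Your displayed inequality has no slot for it. The paper handles $\mathcal R_5$ by an integration by parts in time (Lemma \ref{lem:estR5}): writing $\im\partial_t(\mathbf{z}^\mathbf{m}\mathbf{z}^{-\mathbf{n}})=(\mathbf{m}-\mathbf{n})\cdot\boldsymbol{\omega}\,\mathbf{z}^\mathbf{m}\mathbf{z}^{-\mathbf{n}}+\mathcal R_7$ with $(\mathbf{m}-\mathbf{n})\cdot\boldsymbol{\omega}\neq 0$ from \eqref{eq:linind}, one gains a full derivative and obtains $\big|\int_I\mathcal R_5\big|\lesssim C_0^2\epsilon^4$. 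With that in hand the final bound is $\sum_\mathbf{m}\|\mathbf{z}^\mathbf{m}\|_{L^2_t}^2\lesssim C_0\epsilon^2+C(C_0)\epsilon^3$, hence $\sum_\mathbf{m}\|\mathbf{z}^\mathbf{m}\|_{L^2_t}\lesssim\sqrt{C_0}\,\epsilon$, which together with \eqref{eq:esteta} gives $\|\eta\|_{\mathrm{Stz}^1}+\sum_\mathbf{m}\|\mathbf{z}^\mathbf{m}\|_{L^2_t}\le c_0\sqrt{C_0}\,\epsilon<\tfrac{C_0}{2}\epsilon$ for $C_0$ large.
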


The rest of this section is devoted to the proof of Proposition \ref{prop:mainbounds}.
In the following, we always assume \eqref{Strichartzradiation} holds for $C=C_0$ and the integration w.r.t.\ $t$ is always be over $I$.

We fist estimate the contribution of $\mathcal{R}_j$, $j=1,2$.

%
%
%
%
%

\begin{lemma}\label{lem:stzR12}
	Under the assumption of Proposition \ref{prop:mainbounds},   there is a constant $ C(C_0)$ such that  \begin{align*}
	\|\nabla_\eta \mathcal{R}_j \|_{\mathrm{Stz}^{*1}}\le  C(C_0) \|u_0\|_{H^1}^3,\quad j=1,2.
	\end{align*}
\end{lemma}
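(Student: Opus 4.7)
I would treat the two cases $j=1$ and $j=2$ separately.

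For $j=1$, the bound follows essentially directly from Definition \ref{def:err_1}. I would choose $s \geq 1$ so large that the exponential weight gives $\Sigma^s = H^s_{\gamma_0}(\R^3) \hookrightarrow W^{1,6/5}(\R^3)$ and, correspondingly by duality, $L^6 \hookrightarrow \Sigma^{-s}$. Then pointwise in $t$,
\begin{align*}
\|\nabla_\eta \mathcal R_1(u)\|_{W^{1,6/5}} \lesssim \|u\|_{H^1}^2\Bigl(\|\eta\|_{L^6} + \sum_{\mathbf m \in \mathbf R_{\min}} |\mathbf z^{\mathbf m}|\Bigr).
\end{align*}
The a priori bound $\|u\|_{L^\infty_t H^1} \lesssim \|u_0\|_{H^1}$ (from conservation of mass and energy together with Proposition \ref{prop:lincor}) absorbs two powers of $\|u_0\|_{H^1}$, while integrating the remaining factor in $L^2_t$ via \eqref{Strichartzradiation} yields the third. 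Since $L^2_t W^{1,6/5}\subset \mathrm{Stz}^{*1}$, this closes the case $j=1$.

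For $j=2$, I would compute $\nabla_\eta \mathcal R_2$ by differentiating the finitely many summands
\begin{align*}
\int_0^1\int_0^1(1-t)\langle f(u_{t,s}), \mathfrak f(u_{t,s})(\phi(\mathbf z),\eta,\eta)\rangle\,dt\,ds, \qquad u_{t,s} = \varphi(s\phi(\mathbf z)+t\eta),
\end{align*}
of Definition \ref{def:err_2}. Three types of contributions appear: (i) differentiation of the $\eta$-slots inside $\mathfrak f(\cdot)(\phi(\mathbf z),\eta,\eta)$, producing integrands linear in $\eta$ and linear in $\phi(\mathbf z)$; (ii) and (iii) chain-rule terms from $f(u_{t,s})$ and $\mathfrak f(u_{t,s})$, which by the growth conditions (I),(II) absorb extra factors of $|u_{t,s}|$ (hence of $\|u_0\|_{H^1}$) without degrading the overall degree in $(\mathbf z,\eta)$. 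Since $\varphi$ is near-identity, its differentials $D^k\varphi$ act as bounded operators on the relevant Sobolev spaces and can be treated as harmless factors. Consequently $\nabla_\eta \mathcal R_2$ is pointwise in $t$ bounded by expressions of schematic form $\phi(\mathbf z)\,\eta\,\eta$ (with at most one gradient distributed among the three factors). By H\"older with $1/2+1/6+1/6=5/6$,
\begin{align*}
\|\phi(\mathbf z)\eta^2\|_{W^{1,6/5}} \lesssim \|\phi(\mathbf z)\|_{H^1}\bigl(\|\eta\|_{L^6}\|\eta\|_{W^{1,6}} + \|\eta\|_{L^6}^2\bigr).
\end{align*}
Taking $L^2_t$, the interpolation $\|\eta\|_{L^4_t L^6}\lesssim\|\eta\|_{L^2_tL^6}^{1/2}\|\eta\|_{L^\infty_tL^6}^{1/2}\lesssim C_0^{1/2}\|u_0\|_{H^1}$, together with $\|\phi(\mathbf z)\|_{L^\infty_t H^1}\lesssim\|\mathbf z\|_{L^\infty_t}\lesssim\|u_0\|_{H^1}$ and \eqref{Strichartzradiation}, gives the required bound by $C(C_0)\|u_0\|_{H^1}^3$.

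The main technical obstacle is the bookkeeping in the $\mathcal R_2$ case: one must verify that each of the finitely many terms produced by differentiating the expressions in Definition \ref{def:err_2} remains at least linear in $\eta$ and at least linear in $\phi(\mathbf z)$ after differentiation, so that the H\"older estimate above applies with no loss. The growth conditions (I) and (II) on $\tilde f$ are tailored exactly for this: (I) supplies one extra $|u_{t,s}|$ upon differentiating $f(u_{t,s})$, while (II) supplies two, both of which are harmlessly absorbed into $\|u_0\|_{H^1}$ and preserve the cubic scaling.
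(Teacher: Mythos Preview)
Your treatment of $j=1$ is correct and matches the paper's argument.

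For $j=2$ your outline has the right structure, but there is a genuine gap: you drop the nonlinear prefactor $f(u_{t,s})$ (and its derivatives) when passing to the ``schematic form $\phi(\mathbf z)\,\eta\,\eta$''. For type (I) one has $|f(u)|\lesssim |u|\langle u\rangle^2$ and $|f'(u)|\lesssim\langle u\rangle^2$, and these $\langle u\rangle$--factors are \emph{not} bounded pointwise (small $H^1$ norm does not give small $L^\infty$ norm), so they must be placed explicitly in the H\"older splitting. Your displayed estimate $\|\phi(\mathbf z)\eta^2\|_{W^{1,6/5}}\lesssim\|\phi(\mathbf z)\|_{H^1}(\|\eta\|_{L^6}\|\eta\|_{W^{1,6}}+\|\eta\|_{L^6}^2)$ is the estimate for the bare trilinear term, not for $f'(u)\widetilde\phi\,\widetilde\eta^2$. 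The paper absorbs these factors via $\|\langle u\rangle\|_{L^\infty+L^6}\lesssim 1$ together with the localization of $\widetilde\phi\in\Sigma^s\hookrightarrow L^p$ for all $p$.

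This becomes sharper at the $W^{1,6/5}$ level: applying $\nabla_x$ to $f'(u)\widetilde\phi\,\widetilde\eta^2$ produces the term $f''(u)(\nabla_x u)\,\widetilde\phi\,\widetilde\eta^2$, where $\nabla_x u$ is only in $L^2$. With your $L^2\times L^6\times L^6$ scheme there is no room left for the $\langle u\rangle$ coming from $f''$; the paper resolves this by putting one copy of $\widetilde\eta$ in $L^\infty$ via the embedding $W^{1,6}\hookrightarrow L^\infty$, which your H\"older does not invoke. Your closing paragraph about the growth conditions ``supplying extra $|u_{t,s}|$'' is also slightly backwards: differentiating $f$ lowers the power of $|u|$, not raises it; the issue is the residual $\langle u\rangle^k$ that survives in each case and must be placed. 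Finally, for the terms containing $D^2\varphi$ or $D^3\varphi$, the paper uses the regularizing property $D^k\varphi:(\Sigma^{-s})^k\to\Sigma^s$ (for $k\ge 2$) from Proposition \ref{prop:Darcor} to localize, rather than treating them as bounded on unweighted Sobolev spaces; your one-line dismissal of these operators as ``harmless'' hides this.
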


\begin{proof}
	For $\mathcal{R}_1$, we have
	\begin{align*}
	\| \nabla_\eta\mathcal{R}_1 \|_{\mathrm{Stz}^{*1}}\leq \| \nabla_\eta\mathcal{R}_1 \|_{L^2_t \Sigma^1(I)} \lesssim \|u_0\|_{H^1}^2 \(\|\eta\|_{\mathrm{Stz}^1(I)}+\sum_{\mathbf{m}\in \mathbf{R}_{\min}}\|\mathbf{z}^{\mathbf{m}}\|_{L^2_t(I)}\)\lesssim C_0\|u_0\|_{H^1}^3.
	\end{align*}
	We next estimate type (I) of $\mathcal{R}_2$.
	Ignoring the integral w.r.t.\ $t$ and $s$ and the complex conjugate, which are irrelevant in the estimate, we have
	\begin{align}\label{eq:DR21}
	D_\eta \mathcal{R}_2w = \< f'(u)w,\widetilde\phi \widetilde{\eta}^2\>+\<f(u),D^2\varphi(u)(\phi,w)\widetilde{\eta}^2+2\widetilde\phi \widetilde{\eta}D^2\varphi(u)(\eta,w)+2\widetilde{\phi}\widetilde{\eta}w\>.
	\end{align}
	where $f'(u)w=\partial_R f(u)\Re w+\partial_I f(u)\Im w$ and $\widetilde{\phi}$, $\widetilde{\eta}$ are defined in \eqref{eq:expandR2}.
	The contribution of the first term in the r.h.s.\ of $\eqref{eq:DR21}$ can be estimated as
	\begin{align}\label{eq:DR211}
	\|f'(u)\widetilde{\phi}\widetilde{\eta}^2\|_{L^2_tL^{6/5}}\lesssim \|\mathbf{z}\|_{L^\infty_t}\|\eta\|_{L^\infty_t L^6}\|\eta\|_{L^2_t L^6} \lesssim C_0^3 \|u_0\|_{H^1}^3,
	\end{align}
	where we have used $\|\<u\>\|_{L^\infty+L^6}\lesssim 1$ and the Sobolev embedding $H^1\hookrightarrow L^6$.
	Furthermore, 
	\begin{align}\label{eq:DR212}
	\|\nabla_x\(f'(u)\widetilde{\phi}\widetilde{\eta}^2\)\|_{L^2_tL^{6/5}}\lesssim \|f''(u)\nabla_x u \widetilde{\phi}\widetilde{\eta}^2\|_{L^2_tL^{6/5}}+\|f'(u)\nabla_x\widetilde{\phi}\widetilde{\eta}^2\|_{L^2_tL^{6/5}}+\|f'(u)\widetilde{\phi}\widetilde{\eta}\nabla_x \widetilde\eta\|_{L^2_tL^{6/5}},
	\end{align}
	and, using Sobolev's embedding $W^{1,6}\hookrightarrow L^\infty $,
	\begin{align*}
	\|f''(u)\nabla_x u \widetilde{\phi}\widetilde{\eta}^2\|_{L^2_tL^{6/5}}\lesssim \|\<u\>\widetilde{\phi}\|_{L^\infty_t L^6}\|\nabla_x u\|_{L^\infty_t L^2}\|\eta\|_{L^2_tL^\infty} \|\eta\|_{L^\infty_t L^6}\lesssim C_0^3\|u_0\|_{H^1}^3.
	\end{align*}
	Similar estimates hold for the other two terms in \eqref{eq:DR212}.
	
	Turning to the contribution of the second term in \eqref{eq:DR21}, we have
	\begin{align*}
	\sup_{\|w\|_{(W^{1,6/5})^*}\leq 1} |\<f(u),D^2\varphi(u)(\phi,w)\widetilde{\eta}^2\>|&\lesssim \|f(u){\eta}^2\|_{\Sigma^{-1}} \|\phi\|_{\Sigma^{-1}}\|w\|_{\Sigma^{-1}}\lesssim \|\mathbf{z}\| \|f(u){\eta}^2\|_{L^{6/5}} \\&
	\lesssim \|\mathbf{z}\| \|u\|_{L^2\cap L^6}\|\<u\>\|_{L^6+L^\infty}^2\|\eta\|_{L^6}^2.
	\end{align*}
	where we have used $(W^{1,6/5})^*\hookrightarrow \Sigma^{-1}$ and $L^{6/5}\hookrightarrow \Sigma^{-1}$ which hold by duality.
	Thus, we have the estimate $\lesssim C_0\|u_0\|_{H^1}^3$ for this term too.
	The third term in \eqref{eq:DR21} can be estimated just as the second term and the fourth term can be estimated just as the first term.
	
	The estimates of the type (II) terms in $\mathcal R_2$ is similar, easier and is omitted.
\end{proof}

From
\begin{align*}
\im \partial_t\eta = \nabla_\eta K(u)=P_c\( H \eta +g(|\eta|^2)\eta + \sum_{\mathbf{m}\in \mathbf{R}_{\mathrm{min}}}\mathbf{z} ^{\mathbf{m}} G_{\mathbf{m}} +\nabla_\eta \mathcal R_1 + \nabla_\eta \mathcal R_2\),
\end{align*}
by Lemma \ref{lem:stzR12}  we obtain
\begin{align}\label{eq:esteta}
\|\eta\|_{\mathrm{Stz}^1}\lesssim \|u_0\|_{H^1} + C(C_0)\|u_0\|_{H^1}^3 + \sum_{\mathbf{m}\in \mathbf{R}_{\mathrm{min}}} \|\mathbf{z} ^{\mathbf{m}}\|_{L^2_t}.
\end{align}
We need   bounds on $\mathbf{z} $.
  We set $Z:= \sum_{\mathbf{m}\in \mathbf{R}_{\min}} \mathbf{z}^{\mathbf{m}}R_+(\mathbf{m}\cdot \boldsymbol{\omega})P_c G_{\mathbf{m}}$ and $\xi :=\eta+Z$, where $R_+(\lambda):=(H-\lambda-\im 0)^{-1}$.
Then, 
\begin{align*}
\im \partial_t \xi=P_c\(H\xi + g(|\eta|^2)\eta + \nabla_\eta \mathcal R_1 + \nabla_\eta \mathcal R_2+\mathcal{R}_3\),
\end{align*}
where $\mathcal R_3:=\im \partial_t Z - HZ + \sum_{\mathbf{m}\in \mathbf{R}_{\mathrm{min}}}\mathbf{z} ^{\mathbf{m}} P_cG_{\mathbf{m}}$, which satisfies
\begin{align*}
\mathcal R_3=\sum_{\mathbf{m}\in \mathbf{R}_{\min}}  a_{\mathbf{m}} R_+(\mathbf{m}\cdot \boldsymbol{\omega})P_c G_{\mathbf{m}},\ \text{where }   a_{\mathbf{m}}:=\im \partial_t (\mathbf{z}^\mathbf{m})-(\mathbf{m}\cdot\boldsymbol{\omega}) \mathbf{z}^\mathbf{m}.
\end{align*}

\begin{lemma}\label{lem:esta}
	Under the assumption of Proposition \ref{prop:mainbounds},   there is a constant $ C(C_0)$ such that
	\begin{align}
	\|a_{\mathbf{m}}\|_{L^2_t(I)} \le C(C_0)\|u_0\|_{H^1}^3.\label{eq:aest11}
	\end{align}
\end{lemma}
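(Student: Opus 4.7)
The plan is to differentiate $\mathbf{z}^{\mathbf{m}}=\prod_{j}z_j^{(m_j)}$ by the Leibniz rule, substitute the Hamiltonian equation $\im \partial_t \mathbf{z}=(1+A(\mathbf{z}))\nabla_{\mathbf{z}} K$ of \eqref{eq:etaz} together with the energy expansion \eqref{eq:exp_K}, isolate the leading oscillatory contribution by means of the Second Cancellation Lemma \ref{lem:cancel2}, and then bound each remaining piece in $L^2_t(I)$ using the a priori bounds \eqref{Strichartzradiation} together with the error accounting in Definitions \ref{def:err_1} and \ref{def:err_2}.

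Concretely, since $\partial_t z_j^{m_j}=m_j z_j^{m_j-1}\partial_t z_j$ for $m_j>0$ and $\partial_t \bar z_j^{|m_j|}=|m_j|\bar z_j^{|m_j|-1}\partial_t \bar z_j$ for $m_j<0$, the product rule expresses $\im \partial_t \mathbf{z}^{\mathbf{m}}$ as a linear combination, over $j$ with $m_j\neq 0$, of factors of the schematic form $\mathbf{z}^{\mathbf{m}-\mathbf{e}_j}\cdot \im \partial_t z_j$ (or its conjugate counterpart). Substituting \eqref{eq:etaz} and \eqref{eq:exp_K}, and using Lemma \ref{lem:cancel2} to rewrite $(1+A(\mathbf{z}))\nabla_{\mathbf{z}}E(\phi(\mathbf{z}))=\Lambda(|\mathbf{z}|^2)\mathbf{z}+B(\mathbf{z})$, one arrives at a decomposition
$$a_{\mathbf{m}}=\bigl(\mathbf{m}\cdot(\boldsymbol{\varpi}(|\mathbf{z}|^2)-\boldsymbol{\omega})\bigr)\mathbf{z}^{\mathbf{m}}+E_1+E_2+E_3+E_4,$$
where $E_1$ collects the $B(\mathbf{z})$ terms, $E_2$ collects the $\mathbf{z}$-gradient of $\sum_{\mathbf{n}\in\mathbf{R}_{\min}}\<\mathbf{z}^{\mathbf{n}}G_{\mathbf{n}},\eta\>$, and $E_3,E_4$ collect the contributions of $\nabla_{\mathbf{z}}\mathcal{R}_1$ and $\nabla_{\mathbf{z}}\mathcal{R}_2$, respectively. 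Note that $\nabla_{\mathbf{z}}E(\eta)=0$, and the factor $A(\mathbf{z})=O(\|\mathbf{z}\|^2)$ in \eqref{eq:etaz} only improves each estimate.

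For the quantitative bounds, use $\|\mathbf{z}\|_{L^\infty_t}\lesssim \|u_0\|_{H^1}$ (from conservation of mass and energy, as noted just below Theorem \ref{thm:mainbounds}) together with \eqref{Strichartzradiation}. Since $\boldsymbol{\varpi}(0)=\boldsymbol{\omega}$, the leading term obeys $|(\mathbf{m}\cdot(\boldsymbol{\varpi}(|\mathbf{z}|^2)-\boldsymbol{\omega}))\mathbf{z}^{\mathbf{m}}|\lesssim\|\mathbf{z}\|^2|\mathbf{z}^{\mathbf{m}}|$, giving $L^2_t$-norm $\lesssim C_0\|u_0\|_{H^1}^3$. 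A parity argument ($\sum m_j=1$ and $\sum|m_j|$ share parity, and $\mathbf{e}_j\notin \mathbf{R}$) yields $\|\mathbf{m}\|\ge 3$ on $\mathbf{R}_{\min}$, hence $|\mathbf{z}^{\mathbf{m}-\mathbf{e}_j}|\lesssim \|\mathbf{z}\|^{\|\mathbf{m}\|-1}\lesssim \|\mathbf{z}\|^2$. Combining with Lemma \ref{lem:cancel2}, $\|E_1\|_{L^2_t}\lesssim \|\mathbf{z}\|_{L^\infty_t}^2\sum_{\mathbf{n}}\|\mathbf{z}^{\mathbf{n}}\|_{L^2_t}\lesssim C_0\|u_0\|_{H^1}^3$. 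For $E_2$, a generic summand $\mathbf{z}^{\mathbf{m}-\mathbf{e}_j}\mathbf{z}^{\mathbf{n}-\mathbf{e}_j}\<G_{\mathbf{n}},\eta\>$ is bounded in $L^2_t$ by $\|\mathbf{z}\|_{L^\infty_t}^{\|\mathbf{m}\|+\|\mathbf{n}\|-2}\|G_{\mathbf{n}}\|_{L^{6/5}}\|\eta\|_{L^2_tL^6}\lesssim C_0\|u_0\|_{H^1}^5$. Finally, $E_3$ is handled by \eqref{eq:def:err_1} (using the continuous embedding from weighted $L^p$ spaces into $\Sigma^{-s}$ to absorb $\|\eta\|_{\Sigma^{-s}}$), and $E_4$ by \eqref{eq:est:DzR2}; both contribute $O(C_0\|u_0\|_{H^1}^4)$.

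The main obstacle is the combinatorial bookkeeping needed to verify that every single error piece carries at least three powers of $\|u_0\|_{H^1}$ in $L^2_t$. The key ingredient is the lower bound $\|\mathbf{m}\|\ge 3$ on $\mathbf{R}_{\min}$: it supplies the extra $\|\mathbf{z}\|^2$ factor through $\mathbf{z}^{\mathbf{m}-\mathbf{e}_j}$ that upgrades the merely linear estimate $\|B(\mathbf{z})\|\lesssim \sum_{\mathbf{n}}|\mathbf{z}^{\mathbf{n}}|$ into the required cubic one, and similarly justifies that the $E_2,E_3,E_4$ contributions are better than cubic.
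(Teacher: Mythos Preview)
Your proposal is correct and follows essentially the same approach as the paper: differentiate $\mathbf{z}^{\mathbf{m}}$ via the product rule, substitute the equation for $\im\partial_t z_j$ coming from \eqref{eq:etaz}, \eqref{eq:exp_K} and Lemma~\ref{lem:cancel2}, and then estimate each resulting piece in $L^2_t(I)$ using \eqref{Strichartzradiation}, \eqref{eq:def:err_1}, \eqref{eq:est:DzR2} together with the key observation $\|\mathbf{m}\|\ge 3$ for $\mathbf{m}\in\mathbf{R}_{\min}$. The paper organizes the computation by writing $a_{\mathbf{m}}=\sum_j a_{\mathbf{m},j}$ with $a_{\mathbf{m},j}=m_j(\im\partial_t z_j-\omega_j z_j)\frac{\mathbf{z}^{\mathbf{m}}}{z_j}$ (and its conjugate), but this is just a notational variant of your decomposition into a leading term plus $E_1,\ldots,E_4$.
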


\begin{proof}
	We have $a_\mathbf{m}=\sum_{j=1}^N a_{\mathbf{m},j}$ with
\begin{equation}\label{eq:aest12}
 a_{\mathbf{m},j}= \left\{\begin{array}{l}
 m_j(\im \partial_t z_j-\omega_j z_j)\frac{\mathbf{z}^\mathbf{m}}{z_j} \text{  if $m_j>0$}
\\[1ex]
m_j\overline{(\im \partial_t z_j-\omega_j z_j)}\frac{\mathbf{z}^\mathbf{m}}{\overline{z_j}} \text{  if $m_j<0$.}
\end{array}
\right.
\end{equation}
	By \eqref{eq:odez}, \eqref{eq:exp_K} and Lemma \ref{lem:cancel2}, we have
	\begin{align}
	&\im \partial_t z_j -\omega_j z_j
	=\(\im \partial_t \mathbf{z} - \Lambda(0)\mathbf{z}\)\cdot \mathbf{e}_j \label{eq:aest1}\\
	&=\(\(\Lambda(|\mathbf{z}|^2)-\Lambda(0)\)\mathbf{z}+B(\mathbf{z}) +(1+A(\mathbf{z}))\nabla_{\mathbf{z}}\(\sum_{\mathbf{R}_{\mathrm{min}}}\<\mathbf{z}^\mathbf{m}G_{\mathbf{m}},\eta\>+\mathcal{R}_1+\mathcal{R}_2\)\)\cdot \mathbf{e}_j.\nonumber
	\end{align}
	We estimate  each $ a_{\mathbf{m},j}$ by distinguishing the contribution coming from the terms in the last line in \eqref{eq:aest1}.

\noindent Using $\(\Lambda(|\mathbf{z}|^2)-\Lambda(0)\)\mathbf{z} \cdot \mathbf{e}_j=(\varpi_j(|\mathbf{z}|^2)-\omega_j)z_j$,
	for the first term   we have
	\begin{align*}&
	m _j \|\(\Lambda(|\mathbf{z}|^2)-\Lambda(0)\)\mathbf{z} \cdot \mathbf{e}_j\frac{\mathbf{z}^{\mathbf{m}}}{z_j}\|_{L^2_t} =m_j \| (\varpi_j(|\mathbf{z}|^2)-\omega_j)\mathbf{z}^{\mathbf{m}}\|_{L^2_t}\\&  \lesssim  \|\mathbf{z} \|_{L^\infty _t} ^2  \|\mathbf{z}^{\mathbf{m}}\|_{L^2_t}  \le C_0 ^3\|u_0\|_{H^1}^3.
	\end{align*}
Similarly, by Lemma \ref{lem:cancel2},
\begin{align*}&
	m _j \|B (\mathbf{z}) \cdot \mathbf{e}_j\frac{\mathbf{z}^{\mathbf{m}}}{z_j}\|_{L^2_t } \lesssim   \sum_{\mathbf{n}\in\mathbf{R}_{\mathrm{min}}} m _j \|\mathbf{z}^\mathbf{n}\frac{\mathbf{z}^{\mathbf{m}}}{z_j}\|_{L^2_t }  \le
\sum_{\mathbf{n}\in\mathbf{R}_{\mathrm{min}}} m _j  \|\mathbf{z}^\mathbf{n} \|_{L^2_t }  \|  \frac{\mathbf{z}^{\mathbf{m}}}{z_j}\|_{L^\infty_t } \lesssim C_0 ^3\|u_0\|_{H^1}^3,
	\end{align*}
from the fact that   $\mathbf{m}\in \mathbf{R}_\mathrm{min}$implies $\|\mathbf{m}\|\geq 3$.

\noindent For    $ \mathbf{n}\in \mathbf{R}_{\mathrm{min}} $,  we have
\begin{align*}&
	m _j \|(1+A(\mathbf{z}))\nabla_{\mathbf{z}}\<\mathbf{z}^\mathbf{n}G_{\mathbf{m}},\eta\>\cdot \mathbf{e}_j\frac{\mathbf{z}^{\mathbf{m}}}{z_j}\|_{L^2_t } \lesssim     m _j  \| \eta \| _{L^2_tL^6_x}
\| \nabla_{\mathbf{z}} \mathbf{z}^\mathbf{n}  \|_{L^\infty _t }    \|  \frac{\mathbf{z}^{\mathbf{m}}}{z_j}\|_{L^\infty_t } \lesssim C_0 ^5\|u_0\|_{H^1}^5.
	\end{align*}
Similar estimates using \eqref{eq:def:err_1} and  \eqref{eq:est:DzR2} can be obtained for the terms with $\mathcal{R}_1$ and $ \mathcal{R}_2$.
\end{proof}

When we seek for the nonlinear effect of the  radiation $\eta$ on the $\mathbf{z}$,     we think of  $Z$ as the main term and  of $\xi$ as a   remainder term.
We first estimate $\xi$.

\begin{lemma}\label{lem:estxiL2}
	Under the assumption of Proposition \ref{prop:mainbounds},   there is a constant $ C(C_0)$ such that
	\begin{align*}
	\|\xi\|_{L^2_t\Sigma^{0-}} \lesssim \|u_0\|_{H^1}+C(C_0)\|u_0\|_{H^1}^3.
	\end{align*}
	
\end{lemma}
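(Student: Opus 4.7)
The plan is to apply Duhamel's formula for $\xi$ together with a Kato-type endpoint smoothing estimate for $e^{-\im tH}P_c$ of the form
\begin{align*}
\|e^{-\im tH}P_c v\|_{L^2_t\Sigma^{0-}}\lesssim \|v\|_{L^2},\qquad \Big\|\int_0^t e^{-\im(t-s)H}P_c F(s)\,ds\Big\|_{L^2_t\Sigma^{0-}}\lesssim \|F\|_{L^2_t\Sigma^{0}},
\end{align*}
valid under Assumption \ref{ass:nonres}, combined with the Strichartz inequalities already used for \eqref{eq:esteta}. Since $\eta\in P_cH^1$ and every $P_c G_{\mathbf m}$ sits in $P_c\Sigma^\infty$, we have $P_c\xi=\xi$ and the equation displayed just before Lemma \ref{lem:esta} gives
\begin{align*}
\xi(t)=e^{-\im tH}P_c\xi(0)+\int_0^t e^{-\im(t-s)H}P_c\bigl(g(|\eta|^2)\eta+\nabla_\eta\mathcal R_1+\nabla_\eta\mathcal R_2+\mathcal R_3\bigr)\,ds.
\end{align*}

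For the initial data I write $\|\xi(0)\|_{L^2}\leq \|\eta(0)\|_{L^2}+\|Z(0)\|_{L^2}$; the first piece is $\lesssim\|u_0\|_{H^1}$, and the second is $\lesssim \sum_{\mathbf m\in\mathbf R_{\min}}|\mathbf z(0)^{\mathbf m}|\,\|R_+(\mathbf m\cdot\boldsymbol\omega)P_cG_{\mathbf m}\|_{\Sigma^{0-}}$, which by the limiting absorption principle and $\|\mathbf m\|\geq 3$ is $\lesssim\|u_0\|_{H^1}^3$ with a constant \emph{independent} of $C_0$. For the nonlinearity I use the growth condition \eqref{eq:ggrowth} to bound $|g(|\eta|^2)\eta|\lesssim |\eta|^3+|\eta|^5$ and place it in $\mathrm{Stz}^{*1}$ via Sobolev embeddings, so that the Strichartz inequality applied after \eqref{Strichartzradiation} produces a bound $\lesssim C(C_0)\|u_0\|_{H^1}^3$; this is the standard cubic nonlinear estimate and contributes only to the $C(C_0)\|u_0\|_{H^1}^3$ term. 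The two remainders $\nabla_\eta\mathcal R_1$ and $\nabla_\eta\mathcal R_2$ are already handled by Lemma \ref{lem:stzR12}, which gives $\|\nabla_\eta\mathcal R_j\|_{\mathrm{Stz}^{*1}}\le C(C_0)\|u_0\|_{H^1}^3$.

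The crucial contribution is $\mathcal R_3=\sum_{\mathbf m\in\mathbf R_{\min}} a_{\mathbf m}(t) R_+(\mathbf m\cdot\boldsymbol\omega)P_cG_{\mathbf m}$. Since each $R_+(\mathbf m\cdot\boldsymbol\omega)P_cG_{\mathbf m}$ is a \emph{fixed} function lying in the appropriate weighted $L^2$ space (by LAP and $G_{\mathbf m}\in\Sigma^\infty$), one obtains
\begin{align*}
\Big\|\int_0^t e^{-\im(t-s)H}P_c\mathcal R_3(s)\,ds\Big\|_{L^2_t\Sigma^{0-}}\lesssim \sum_{\mathbf m\in\mathbf R_{\min}}\|a_{\mathbf m}\|_{L^2_t}\,\|R_+(\mathbf m\cdot\boldsymbol\omega)P_cG_{\mathbf m}\|_{\Sigma^0},
\end{align*}
and Lemma \ref{lem:esta} immediately closes this as $\lesssim C(C_0)\|u_0\|_{H^1}^3$.

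The main obstacle is precisely this last term: the naive forcing $\sum\mathbf z^{\mathbf m}P_cG_{\mathbf m}$ appearing in the $\eta$--equation is not in any manifestly $L^2_t$ space without invoking the a priori bound for $\|\mathbf z^{\mathbf m}\|_{L^2_t}$, and substituting that bound produces a contribution of size $C_0\|u_0\|_{H^1}$ which would spoil the bootstrap. The whole point of introducing $Z$ and passing to $\xi$ is that the resonant part $R_+(\mathbf m\cdot\boldsymbol\omega)P_cG_{\mathbf m}$ is absorbed into the unknown, leaving only $a_{\mathbf m}$, which Lemma \ref{lem:esta} controls by a genuinely cubic $L^2_t$--quantity. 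Once this is recognised, combining the four bounds above yields $\|\xi\|_{L^2_t\Sigma^{0-}}\lesssim \|u_0\|_{H^1}+C(C_0)\|u_0\|_{H^1}^3$, as claimed.
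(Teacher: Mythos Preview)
Your overall strategy---Duhamel for $\xi$, then estimate the free piece, the nonlinearity, $\nabla_\eta\mathcal R_{1,2}$ via Lemma \ref{lem:stzR12}, and $\mathcal R_3$ via Lemma \ref{lem:esta}---is exactly the paper's. There is, however, a genuine gap in your treatment of the two terms carrying $R_+(\mathbf m\cdot\boldsymbol\omega)P_cG_{\mathbf m}$.

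Since $\mathbf m\cdot\boldsymbol\omega>0$ for $\mathbf m\in\mathbf R_{\mathrm{min}}$, the boundary resolvent value $R_+(\mathbf m\cdot\boldsymbol\omega)P_cG_{\mathbf m}$ is an outgoing wave decaying like $|x|^{-1}$ at infinity; the limiting absorption principle places it in $\Sigma^{0-}$ but \emph{not} in $L^2$, and certainly not in $\Sigma^0$ (in fact Assumption \ref{ass:FGR} forces the far--field amplitude to be nonzero). Consequently $Z(0)\notin L^2$, so the step ``$\|\xi(0)\|_{L^2}\le\|\eta(0)\|_{L^2}+\|Z(0)\|_{L^2}$'' followed by Kato smoothing is unavailable; and $\mathcal R_3\notin L^2_t\Sigma^0$, so the retarded smoothing estimate you invoke does not apply to it either---the norm $\|R_+(\mathbf m\cdot\boldsymbol\omega)P_cG_{\mathbf m}\|_{\Sigma^0}$ appearing in your last display is infinite.

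The paper repairs both places by keeping the propagator and the boundary resolvent \emph{together}, using the pointwise dispersive bound
\[
\|e^{-\im tH}R_+(\mathbf m\cdot\boldsymbol\omega)P_cf\|_{\Sigma^{0-}}\lesssim\langle t\rangle^{-3/2}\|f\|_{\Sigma^0}\qquad(\mathbf m\in\mathbf R_{\mathrm{min}}).
\]
This gives $\|e^{-\im tH}Z(0)\|_{L^2_t\Sigma^{0-}}\lesssim\|u_0\|_{H^1}^3$ directly, and for $\mathcal R_3$ one convolves the $L^1_t$ kernel $\langle\cdot\rangle^{-3/2}$ against $a_{\mathbf m}\in L^2_t$ via Young's inequality. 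With this correction the rest of your argument (the $\eta(0)$ term, the cubic nonlinearity, and the $\mathrm{Stz}^{*}$ bounds for $\nabla_\eta\mathcal R_{1,2}$) is fine.
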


Here, the the key difference from \eqref{eq:esteta} is that the last summation in the r.h.s. of \eqref{eq:esteta} has been eliminated.
This because the formula $\xi=-Z +\eta $  is a normal form expansion designed exactly to eliminate that summation from the equation of $\xi$.

\begin{proof}

Since $\xi=\eta+Z$, we have
\begin{align*}
\|\xi\|_{L^2_t\Sigma^{0-}}\lesssim & \|e^{-\im t H}\eta(0)\|_{\mathrm{Stz}^0}+\|e^{-\im t H}Z(0)\|_{L^2_t\Sigma^{0-}}+\|g(|\eta|^2)\eta\|_{\mathrm{Stz}^{*0}} + \|\nabla_\eta \mathcal{R}_1\|_{\mathrm{Stz}^{*0}}\\&+\|\nabla_\eta\mathcal{R}_2\|_{\mathrm{Stz}^{*0}}+\|\int_0^t e^{-\im (t-s)H}P_c \mathcal{R}_3\,ds\|_{L^2_t\Sigma^{0-}}.
\end{align*}
Using the estimate $\|e^{-\im t H}R_+(\mathbf{m}\cdot \boldsymbol{\omega})P_cf\|_{\Sigma^{0-}}\lesssim\<t\>^{-3/2}\|f\|_{\Sigma^0}$ for $\mathbf{m}\in \mathbf{R}_{\mathrm{min}}$, we have
\begin{align*}
\|e^{-\im t H}Z(0)\|_{L^2_t\Sigma^{0-}}&\lesssim \sum_{\mathbf{m}\in \mathbf{R}_{\mathrm{min}} } \|\mathbf{z}(0)\|^{\|\mathbf{m}\|}\lesssim \|u_0\|_{H^1}^3 \ \text{and}\\
\|\int_0^t e^{-\im (t-s)H}P_c \mathcal{R}_3\,ds\|_{L^2_t\Sigma^{0-}}&\lesssim \|a_{\mathbf{m}}\|_{L^2_t}\lesssim C(C_0)\|u_0\|_{H^1}^3
\end{align*}
Therefore, we have the conclusion.
\end{proof}

We recall that for $F,G\in C^1(B_{H^1}(0,\delta),\R)$ we have the    Poisson brackets   given by
\begin{align*}
\{F,G\}:=DF X_G^{(1)}=\Omega_1(X_F^{(1)},X_G^{(1)}).
\end{align*}
Obviously $\{F,G\}=-\{G,F\}$.
The relevance here is that, if $u(t)$ is an integral curve of the Hamilton vector field $X_G^{(1)}$,  then $\frac{d}{dt}F(u(t))=\{F,G\}$.
Therefore
\begin{align}
  \frac{d}{dt}E(\phi(\mathbf{z}))=\{ E(\phi(\mathbf{z})), K(\mathbf{z},\eta)\}
   =\left\{ E(\phi(\mathbf{z})), \sum_{\mathbf{m}\in \mathbf{R}_{\mathrm{min}}}\< \mathbf{z}^{\mathbf{m}}G_\mathbf{m},\eta\> +\mathcal R_1 +\mathcal R_2\right\} ,\label{eq:FGR:poisson1}
\end{align}
where we used that $\{ E(\phi(\mathbf{z})), E(\phi(\mathbf{z}))\}=\{E(\phi(\mathbf{z})), E(\eta) \}=0$ because Poisson brackets are anti-symmetric and the symplectic form is diagonal w.r.t.\ $\mathbf{z}$ and $\eta$.
For the main   Poisson bracket in the r.h.s.  \eqref{eq:FGR:poisson1} we claim
\begin{align}\nonumber
  \sum_{\mathbf{m}\in \mathbf{R}_{\mathrm{min}}} \left\{E(\phi(\mathbf{z})), \<\mathbf{z}^{\mathbf{m}}G_\mathbf{m}, \eta \> \right\} &= -\sum_{\mathbf{m}\in \mathbf{R}_{\mathrm{min}}} \<\nabla_{\mathbf{z}}\<\mathbf{z}^{\mathbf{m}}G_\mathbf{m}, \eta \>,D\mathbf{z}X_{E(\phi(\mathbf{z}))}^{(1)}\>_{\C^N}\\&
  =\sum_{\mathbf{m}\in \mathbf{R}_{\mathrm{min}}}\< \im (\boldsymbol{\omega}\cdot\mathbf{m}) \mathbf{z}^{\mathbf{m}} G_{\mathbf{m}},\eta \>+\mathcal{R}_4,\label{eq:FGR:Poisson2}
\end{align}
where $\mathcal{R}_4=\sum_{\mathbf{m}\in \mathbf{R}_{\mathrm{min}} }\<\im \widetilde{a_{\mathbf{m}}}G_{\mathbf{m}},\eta\>$ with $\widetilde{a_{\mathbf{m}}}=D_{\mathbf{z}}(\mathbf{z}^{\mathbf{m}})\((\Lambda(|\mathbf{z}|^2)-\Lambda(0))\mathbf{z}+B(\mathbf{z})\)$.
To prove formula  \eqref{eq:FGR:Poisson2}, using  Lemma \ref{lem:cancel2}
we compute
\begin{align*} &
   \left\{E(\phi(\mathbf{z})), \<\mathbf{z}^{\mathbf{m}}G_\mathbf{m}, \eta \> \right\}  =  - \<\nabla_{\mathbf{z}}\<\mathbf{z}^{\mathbf{m}}G_\mathbf{m}, \eta \>,D\mathbf{z}X_{E(\phi(\mathbf{z}))}^{(1)}\>_{\C^N} \\& =\<\nabla_{\mathbf{z}}\<\mathbf{z}^{\mathbf{m}}G_\mathbf{m}, \eta \>, \im (1+A(z) E(\phi(\mathbf{z}))   \>_{\C^N} =  \<\nabla_{\mathbf{z}}\<\mathbf{z}^{\mathbf{m}}G_\mathbf{m}, \eta \>, \im \Lambda(0)\mathbf{z}   \>_{\C^N} \\& +  \<\nabla_{\mathbf{z}}\<\mathbf{z}^{\mathbf{m}}G_\mathbf{m}, \eta \>, \im \(  (\Lambda(|\mathbf{z}|^2)-\Lambda(0) )\mathbf{z} + B(\mathbf{z})\)   \>_{\C^N}.
\end{align*}
 By   elementary computations, we have the following, which completes the proof of  \eqref{eq:FGR:Poisson2}:
\begin{align*}  &
    \<\nabla_{\mathbf{z}}\<\mathbf{z}^{\mathbf{m}}G_\mathbf{m}, \eta \>, \im \Lambda(0)\mathbf{z}   \>_{\C^N}  = 2 ^{-1}\sum _{j=1,...,N}\<  \nabla_{\mathbf{z}} \mathbf{z}^{\mathbf{m}} ( G_\mathbf{m}, \overline{\eta} )  +  \nabla_{\mathbf{z}} \overline{\mathbf{z} ^{\mathbf{m}}}
    ( \overline{G}_\mathbf{m},  \eta  ), \im \omega _j z_j \mathbf{e}_j   \>_{\C^N}\\& = 2 ^{-1}\sum _{j=1,...,N}
    \left [ \partial _{z_j}  \( \mathbf{z}^{\mathbf{m}} ( G_\mathbf{m}, \overline{\eta} )+   \overline{\mathbf{z} ^{\mathbf{m}}}
    ( \overline{G}_\mathbf{m},  \eta  )     \)   \im \omega _j z_j  - \partial _{\overline{z}_j}  \( \mathbf{z}^{\mathbf{m}} ( G_\mathbf{m}, \overline{\eta} )+   \overline{\mathbf{z} ^{\mathbf{m}}}
    ( \overline{G}_\mathbf{m},  \eta  )     \)   \im \omega _j \overline{z}_j         \right ] \\& = 2 ^{-1} \im   (\boldsymbol{\omega}\cdot\mathbf{m}) \mathbf{z}^{\mathbf{m}}( G_\mathbf{m}, \overline{\eta} ) -  2 ^{-1} \im   (\boldsymbol{\omega}\cdot\mathbf{m}) \overline{\mathbf{z} ^{\mathbf{m}}}
    ( \overline{G}_\mathbf{m},  \eta  )  = \< \im (\boldsymbol{\omega}\cdot\mathbf{m}) \mathbf{z}^{\mathbf{m}} G_{\mathbf{m}},\eta \> .
\end{align*}

\noindent Proceeding as  in  Lemma   \ref{lem:esta} we have
\begin{align}\label{est:atildeL2}
\|\widetilde{a_{\mathbf{m}}}\|_{L^2_t(I)}\le C(C_0)\|u_0\|_{H^1}^3.
\end{align}
Entering  the expansion  $\eta=-Z+\xi$, we obtain
\begin{align}\label{eq:FGR:Poisson3}
  \sum_{\mathbf{m}\in \mathbf{R}_{\mathrm{min}}}\< \im (\boldsymbol{\omega}\cdot\mathbf{m}) \mathbf{z}^{\mathbf{m}} G_{\mathbf{m}},\eta \>=
  -\sum_{\mathbf{m}\in \mathbf{R}_{\mathrm{min}}}(\boldsymbol{\omega}\cdot\mathbf{m})|\mathbf{z}|^{2|\mathbf{m}|}\< \im   P_cG_{\mathbf{m}}, R_+(\mathbf{m}\cdot \boldsymbol{\omega}) P_cG_{\mathbf{m}} \>+\mathcal{R}_5+\mathcal{R}_6,
\end{align}
where
\begin{align*}
\mathcal{R}_5=-\sum_{{\mathbf{m},\mathbf{n}\in \mathbf{R}_{\mathrm{min}},\  \mathbf{m}\neq \mathbf{n}}}\< \im (\boldsymbol{\omega}\cdot\mathbf{m}) \mathbf{z}^{\mathbf{m}} G_{\mathbf{m}}, \mathbf{z}^{\mathbf{n}} R_+(\mathbf{m}\cdot \boldsymbol{\omega}) P_cG_{\mathbf{n}} \>,\quad
\mathcal{R}_6=\sum_{\mathbf{m}\in \mathbf{R}_{\mathrm{min}}}\< \im (\boldsymbol{\omega}\cdot\mathbf{m}) \mathbf{z}^{\mathbf{m}} G_{\mathbf{m}},\xi \>.
\end{align*}

\begin{lemma}\label{lem:estR46}
	We have, for a fixed constant $c_0$
	\begin{align} \sum_{j=1,2}\|\<\partial_\mathbf{z}\mathcal{R}_j,D\mathbf{z}X_{E(\phi(\mathbf{z}))}^{(1)}\>\|_{L^1_t(I)}+\sum_{j=4,6}\|\mathcal{R}_j\|_{L^1_t(I)}\le c_0C_0   \|u_0\| _{H^1}^2   +  C(C_0)   \(\|u_0\| _{H^1}^3+      \|u_0\| _{H^1}^4\)   .\label{eq:estR456}
	\end{align}
\end{lemma}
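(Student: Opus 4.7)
The plan is to estimate each of the four summands in \eqref{eq:estR456} separately. I will rely on the bootstrap bounds \eqref{Strichartzradiation}, mass--energy conservation giving $\|\mathbf{z}\|_{L^\infty_t(I)}+\|u\|_{L^\infty_t H^1(I)}\lesssim\|u_0\|_{H^1}$, and the inclusion $L^6\hookrightarrow\Sigma^{-s}$ for sufficiently large $s$ (valid since $\cosh(-\gamma_0 x)$ has arbitrary integrability), which combined with the Strichartz estimate gives $\|\eta\|_{L^2_t(I)\Sigma^{-s}}\lesssim\|\eta\|_{L^2_t L^6}\le\|\eta\|_{\mathrm{Stz}^1(I)}\le C_0\|u_0\|_{H^1}$. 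The main obstacle will be the $\mathcal{R}_1$-term, where the generic estimate from Definition \ref{def:err_1} does not suffice.

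For the dominant term $\mathcal{R}_6$, I will apply Cauchy--Schwarz in time together with the embedding $\Sigma^{0-}\hookrightarrow\Sigma^{-s}$ to obtain
\begin{align*}
\|\mathcal{R}_6\|_{L^1_t(I)}\lesssim\sum_{\mathbf{m}\in\mathbf{R}_{\mathrm{min}}}\|G_{\mathbf{m}}\|_{\Sigma^s}\|\mathbf{z}^{\mathbf{m}}\|_{L^2_t}\|\xi\|_{L^2_t\Sigma^{-s}}.
\end{align*}
Inserting $\sum\|\mathbf{z}^{\mathbf{m}}\|_{L^2_t}\le C_0\|u_0\|_{H^1}$ from the bootstrap and $\|\xi\|_{L^2_t\Sigma^{-s}}\le c_1\|u_0\|_{H^1}+C(C_0)\|u_0\|_{H^1}^3$ from Lemma \ref{lem:estxiL2} will yield the dominant $c_0 C_0\|u_0\|_{H^1}^2$ contribution plus a $C(C_0)\|u_0\|_{H^1}^4$ correction. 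Analogously, Cauchy--Schwarz combined with \eqref{est:atildeL2} produces $\|\mathcal{R}_4\|_{L^1_t}\le C(C_0)\|u_0\|_{H^1}^3\cdot C_0\|u_0\|_{H^1}=C(C_0)\|u_0\|_{H^1}^4$. For the $\mathcal{R}_2$-term, Lemma \ref{lem:cancel2} provides $|D\mathbf{z}X^{(1)}_{E(\phi(\mathbf{z}))}|\lesssim\|\mathbf{z}\|$ and \eqref{eq:est:DzR2} provides $|\nabla_{\mathbf{z}}\mathcal{R}_2|\lesssim\|u\|_{H^1}\|\eta\|_{L^6}^2$, so Hölder in $t$ will give $\le\|\mathbf{z}\|_{L^\infty_t}\|u\|_{L^\infty_t H^1}\|\eta\|_{L^2_t L^6}^2\lesssim C_0^3\|u_0\|_{H^1}^4$.

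The subtlety lies with the $\mathcal{R}_1$-contribution. The crude bound $|\nabla_{\mathbf{z}}\mathcal{R}_1|\lesssim\|u\|_{H^1}^2(\|\eta\|_{\Sigma^{-s}}+\sum|\mathbf{z}^{\mathbf{m}}|)$ from Definition \ref{def:err_1}, when paired with $D\mathbf{z}X^{(1)}_{E(\phi(\mathbf{z}))}\sim\|\mathbf{z}\|$ (only $L^\infty_t$-bounded), would yield merely an $L^2_t$ estimate, off by a factor $|I|^{1/2}$ from the required $L^1_t$ one. My plan is to use instead the explicit form of $\mathcal{R}_1$ extracted from Lemma \ref{lem:cancel}:
\begin{align*}
\mathcal{R}_1=\left\langle (D\varphi(\phi(\mathbf{z}))^*-1)\sum_{\mathbf{m}\in\mathbf{R}_{\mathrm{min}}}\mathbf{z}^{\mathbf{m}} G_{\mathbf{m}}+P_c D\varphi(\phi(\mathbf{z}))^*\mathcal{R}(\mathbf{z}),\,\eta\right\rangle.
\end{align*}
Differentiating \eqref{eq:Darcor1} in the $\eta$-direction will show that $D\varphi(\phi(\mathbf{z}))^*-1=\mathcal{O}(\|\mathbf{z}\|^2)$ as an operator on $\Sigma^s$, while \eqref{est:R} gives $\|\mathcal{R}(\mathbf{z})\|_{\Sigma^s}\lesssim\|\mathbf{z}\|^2\sum|\mathbf{z}^{\mathbf{m}}|$; since $\|\mathbf{m}\|\ge 3$ for $\mathbf{m}\in\mathbf{R}_{\mathrm{min}}$ (so that $\nabla_{\mathbf{z}}\mathbf{z}^{\mathbf{m}}=\mathcal{O}(\|\mathbf{z}\|^2)$), these will yield $|\nabla_{\mathbf{z}}\mathcal{R}_1|\lesssim\|\mathbf{z}\|\sum|\mathbf{z}^{\mathbf{m}}|\,\|\eta\|_{\Sigma^{-s}}$. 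Combined with $|D\mathbf{z}X^{(1)}_{E(\phi(\mathbf{z}))}|\lesssim\|\mathbf{z}\|$ and Cauchy--Schwarz in time,
\begin{align*}
\bigl\|\langle\nabla_{\mathbf{z}}\mathcal{R}_1,D\mathbf{z}X^{(1)}_{E(\phi(\mathbf{z}))}\rangle\bigr\|_{L^1_t(I)}\lesssim\|\mathbf{z}\|_{L^\infty_t}^2\sum_{\mathbf{m}}\|\mathbf{z}^{\mathbf{m}}\|_{L^2_t}\|\eta\|_{L^2_t\Sigma^{-s}}\le C_0^4\|u_0\|_{H^1}^4,
\end{align*}
and summing the four contributions will give \eqref{eq:estR456}.
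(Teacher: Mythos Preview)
Your treatment of $\mathcal{R}_6$, $\mathcal{R}_4$, and $\mathcal{R}_2$ is exactly the paper's argument. The paper disposes of the $j=1$ term in one line, claiming it ``follows from \eqref{eq:def:err_1} and the a priori estimates \eqref{Strichartzradiation}''; you are right to be more cautious here, since the generic bound $\|\nabla_{\mathbf{z}}\mathcal R_1\|\lesssim \|u\|_{H^1}^2(\|\eta\|_{\Sigma^{-s}}+\sum|\mathbf{z}^{\mathbf{m}}|)$ paired with $|D\mathbf{z}X^{(1)}_{E(\phi(\mathbf{z}))}|\lesssim\|\mathbf{z}\|$ only places the product in $L^2_t$, not $L^1_t$. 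Your remedy---opening up $\mathcal R_1$ via Lemma \ref{lem:cancel} and exploiting that $D\varphi(\phi(\mathbf{z}))^*-1=\mathcal O(\|\mathbf{z}\|^2)$, $\|\mathcal R(\mathbf{z})\|\lesssim\|\mathbf{z}\|^2\sum|\mathbf{z}^{\mathbf{m}}|$, and $\|\mathbf{m}\|\ge3$ so that the differentiated object carries a factor $\sum|\mathbf{z}^{\mathbf{m}}|\in L^2_t$ in addition to $\|\eta\|_{\Sigma^{-s}}\in L^2_t$---is a legitimate way to recover the $L^1_t$ bound, and in this sense your argument is more careful than the paper's.

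There is, however, a small gap in your formula for $\mathcal R_1$. In \eqref{eq:exp_K} the symbol $\mathcal R_1$ collects \emph{two} contributions: the one from \eqref{eq:eta1st} (which is the piece you wrote down) \emph{and} the one from \eqref{eq:enegykin2}, namely
\[
\tfrac12\int_0^1\!\!\int_0^1(1-t)\,\partial_s\partial_t^2\langle H\varphi(s\phi(\mathbf{z})+t\eta),\varphi(s\phi(\mathbf{z})+t\eta)\rangle\,dt\,ds.
\]
You have not accounted for this second piece. Fortunately it causes no trouble: each term in that expansion carries at least two factors of $\eta$ (through $D\varphi\,\eta$, $D^2\varphi(\cdot,\eta)$, or $D^3\varphi(\cdot,\eta,\eta)$) paired against regularizing operators, so its $\mathbf{z}$-gradient is bounded pointwise by a constant times $\|\eta\|_{\Sigma^{-s}}^2$, which lies in $L^1_t$ directly by the bootstrap on $\|\eta\|_{L^2_t\Sigma^{-s}}$. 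Once you add a sentence handling this second contribution, your argument is complete.
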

Here the crucial point is that in the quadratic term we have $C_0$ instead of $C_0^2$, while the exact dependence  in $C_0$ of $ C(C_0)$
is immaterial.
\begin{proof}  The main bound is the following, using   Lemma \ref{lem:estxiL2} and the a priori estimate \eqref{Strichartzradiation},
\begin{align*} \|\mathcal{R}_6\|_{L^1_t(I)} &\lesssim   \|\xi\|_{L^2_t\Sigma^{0-}}   \sum_{\mathbf{m}\in \mathbf{R}_{\mathrm{min}} }\|  \mathbf{z}^{\mathbf{m}} \| _{L^2_t(I)} \lesssim      C_0 \(  \|u_0\|_{H^1}+C(C_0)\|u_0\|_{H^1}^3 \)  \|u_0\|_{H^1}.
\end{align*}
Turning to the remainders, for $j=1 $  (resp. $j=2$) the upper bound $\le C(C_0)   \|u_0\| _{H^1}^3 $ follows from \eqref{eq:def:err_1} (resp. \eqref{eq:est:DzR2}) and the a priori estimates \eqref{Strichartzradiation}. The  upper bound $\le C(C_0)   \|u_0\| _{H^1}^4 $ for $j=4 $ follows from \eqref{est:atildeL2}.
\end{proof}

\begin{lemma}\label{lem:estR5}
	We have
	\begin{align}\label{eq:est:R5}
	\left|\int_I \mathcal R_5\,dt\right|\lesssim C_0^2\|u_0\|_{H^1}^4.
	\end{align}
\end{lemma}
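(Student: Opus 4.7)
The plan is to exploit the non--resonance condition $\boldsymbol{\omega}\cdot(\mathbf{m}-\mathbf{n})\neq 0$ for distinct $\mathbf{m},\mathbf{n}\in\mathbf{R}_{\mathrm{min}}$, guaranteed by Assumption \ref{ass:linearInd}, in order to integrate the oscillatory product $\mathbf{z}^{\mathbf{m}}\overline{\mathbf{z}^{\mathbf{n}}}$ by parts in time. Each summand of $\mathcal{R}_5$ is of the form $c_{\mathbf{m},\mathbf{n}}\mathbf{z}^{\mathbf{m}}\overline{\mathbf{z}^{\mathbf{n}}}$ for a fixed scalar $c_{\mathbf{m},\mathbf{n}}$ depending on $\boldsymbol{\omega}\cdot\mathbf{m}$ and on $(G_{\mathbf{m}},\overline{R_+(\mathbf{m}\cdot\boldsymbol{\omega})P_cG_{\mathbf{n}}})$, so it suffices to bound $\int_I \mathbf{z}^{\mathbf{m}}\overline{\mathbf{z}^{\mathbf{n}}}\,dt$ for each such pair.

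From the definition of $a_{\mathbf{m}}$ given just before \eqref{eq:aest11} one has $\partial_t\mathbf{z}^{\mathbf{m}}=-\im(\boldsymbol{\omega}\cdot\mathbf{m})\mathbf{z}^{\mathbf{m}}-\im a_{\mathbf{m}}$, and its complex conjugate for $\overline{\mathbf{z}^{\mathbf{n}}}$. Setting $\alpha_{\mathbf{m},\mathbf{n}}:=\boldsymbol{\omega}\cdot(\mathbf{m}-\mathbf{n})$, the Leibniz rule gives
\begin{align*}
\partial_t\bigl(\mathbf{z}^{\mathbf{m}}\overline{\mathbf{z}^{\mathbf{n}}}\bigr)=-\im\alpha_{\mathbf{m},\mathbf{n}}\,\mathbf{z}^{\mathbf{m}}\overline{\mathbf{z}^{\mathbf{n}}}-\im a_{\mathbf{m}}\overline{\mathbf{z}^{\mathbf{n}}}+\im\mathbf{z}^{\mathbf{m}}\overline{a_{\mathbf{n}}}.
\end{align*}
Since $\alpha_{\mathbf{m},\mathbf{n}}\neq 0$, solving for $\mathbf{z}^{\mathbf{m}}\overline{\mathbf{z}^{\mathbf{n}}}$ and integrating over $I$ yields
\begin{align*}
\int_I \mathbf{z}^{\mathbf{m}}\overline{\mathbf{z}^{\mathbf{n}}}\,dt=\frac{\im}{\alpha_{\mathbf{m},\mathbf{n}}}\Bigl(\bigl[\mathbf{z}^{\mathbf{m}}\overline{\mathbf{z}^{\mathbf{n}}}\bigr]_{\partial I}+\im\int_I a_{\mathbf{m}}\overline{\mathbf{z}^{\mathbf{n}}}\,dt-\im\int_I \mathbf{z}^{\mathbf{m}}\overline{a_{\mathbf{n}}}\,dt\Bigr).
\end{align*}

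It remains to control the three terms on the right. Using $\|\mathbf{m}\|,\|\mathbf{n}\|\geq 3$ for $\mathbf{m},\mathbf{n}\in\mathbf{R}_{\mathrm{min}}$ together with the a priori bound \eqref{L^inftydiscrete}, the boundary contribution is estimated by
\begin{align*}
\|\mathbf{z}\|_{L^\infty_t(I)}^{\|\mathbf{m}\|+\|\mathbf{n}\|}\lesssim C_0^6\,\|u_0\|_{H^1}^6.
\end{align*}
The two cross integrals are handled by Cauchy--Schwarz together with Lemma \ref{lem:esta} and \eqref{Strichartzradiation}:
\begin{align*}
\Bigl|\int_I a_{\mathbf{m}}\overline{\mathbf{z}^{\mathbf{n}}}\,dt\Bigr|\leq \|a_{\mathbf{m}}\|_{L^2_t(I)}\,\|\mathbf{z}^{\mathbf{n}}\|_{L^2_t(I)}\lesssim C(C_0)\,C_0\,\|u_0\|_{H^1}^4,
\end{align*}
and symmetrically for $\int_I \mathbf{z}^{\mathbf{m}}\overline{a_{\mathbf{n}}}\,dt$. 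Summing over the finite set of pairs $(\mathbf{m},\mathbf{n})\in\mathbf{R}_{\mathrm{min}}^{2}$ with $\mathbf{m}\neq \mathbf{n}$, and absorbing the higher power $\|u_0\|_{H^1}^6$ coming from the boundary term into $\|u_0\|_{H^1}^4$ under the smallness hypothesis on $\|u_0\|_{H^1}$, one arrives at \eqref{eq:est:R5}.

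The main (mild) obstacle is verifying the derivation identity for $\partial_t(\mathbf{z}^{\mathbf{m}}\overline{\mathbf{z}^{\mathbf{n}}})$ in the general setting of \eqref{eq:zkakko}, where entries of $\mathbf{m}$ or $\mathbf{n}$ may be negative and hence $\mathbf{z}^{\mathbf{m}}$ already involves complex conjugates. This follows at once from the definition of $a_{\mathbf{m}}$ and its conjugate $\overline{a_{\mathbf{n}}}$, so the Leibniz computation extends without modification; everything else reduces to the bounds already established in Lemma \ref{lem:esta} and the a priori estimates.
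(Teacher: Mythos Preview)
Your proof is correct and follows the same integration-by-parts argument as the paper, exploiting $\boldsymbol{\omega}\cdot(\mathbf{m}-\mathbf{n})\neq 0$; the only cosmetic difference is that you package the remainder via the already-estimated $a_{\mathbf{m}}$ from Lemma~\ref{lem:esta}, whereas the paper re-derives the equivalent quantity $\mathcal{R}_7$ directly from the equations of motion for $\mathbf{z}$. Your final constant comes out as $C(C_0)$ rather than the stated $C_0^2$, but this is immaterial for the bootstrap in Proposition~\ref{prop:mainbounds}.
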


\begin{proof}
	Let $ \mathbf{m}\neq \mathbf{n} $.
	By \eqref{eq:etaz}, \eqref{eq:exp_K} and Lemma \ref{lem:cancel2}, we have
	\begin{align*}
	\im\partial_t(\mathbf{z}^\mathbf{m}\mathbf{z}^{\mathbf{-n}})=(\mathbf{m-n})\cdot \boldsymbol{\omega} \mathbf{z}^\mathbf{m}\mathbf{z}^{\mathbf{-n}} + \mathcal R_7,
	\end{align*}
	where
	\begin{align*}
	\mathcal R_7=&(\mathbf{m-n})\cdot (\boldsymbol{\varpi}(|\mathbf{z}|^2)-\boldsymbol{\omega}) \mathbf{z}^\mathbf{m}\mathbf{z}^{\mathbf{-n}}\\&\quad+D_{\mathbf{z}}(\mathbf{z}^\mathbf{m}\mathbf{z}^{\mathbf{-n}})\(B(\mathbf{z})+(1+A(\mathbf{z}))\(\nabla_{\mathbf{z}}\(\sum_{\mathbf{R}_{\mathrm{min}}}\<\mathbf{z}^\mathbf{m}G_{\mathbf{m}},\eta\>+\mathcal{R}_1+\mathcal{R}_2\)\)\).
	\end{align*}
	Then, we have
	\begin{align*}
	\|\mathcal{R}_7\|_{L^2}\lesssim C_0 \|u_0\|_{H^1}^4.
	\end{align*}
	Therefore, since
	\begin{align*}
	 &\< \im (\boldsymbol{\omega}\cdot\mathbf{m}) \mathbf{z}^{\mathbf{m}} G_{\mathbf{m}},  \mathbf{z}^{\mathbf{n}} R_+(\boldsymbol{\omega}\cdot \mathbf{n}) G_{\mathbf{n}} \>
	\\&\quad=-\frac{\boldsymbol{\omega}\cdot\mathbf{m}}{(\mathbf{m-n})\cdot \boldsymbol{\omega}}\(\partial_t \<   \mathbf{z}^{\mathbf{m}} G_{\mathbf{m}},  \mathbf{z}^{\mathbf{n}} R_+(\boldsymbol{\omega}\cdot \mathbf{n}) G_{\mathbf{n}} \> +\<   \mathcal{R}_7 G_{\mathbf{m}},   R_+(\boldsymbol{\omega}\cdot \mathbf{n}) G_{\mathbf{n}} \>\),
	\end{align*}
	integrating the above equation over $I$, we have \eqref{eq:est:R5}.
\end{proof}

From \eqref{eq:FGR:poisson1}, \eqref{eq:FGR:Poisson2}, \eqref{eq:FGR:Poisson3}, Lemmas \ref{lem:estR46} and \ref{lem:estR5}, and
\begin{align*}
\< \im G_{\mathbf{m}},   (H-\boldsymbol{\omega}\cdot \mathbf{m} -\im 0)^{-1} G_{\mathbf{m}} \> =     \frac{1}{16 \pi \sqrt{\boldsymbol{\omega}\cdot \mathbf{m}}} \int_{|\zeta|^2=\boldsymbol{\omega}\cdot \mathbf{m}}|\widehat{G_{\mathbf{m}}}(\zeta)|\,d\zeta\gtrsim 1,
\end{align*}
( for the latter see
$(H-\boldsymbol{\omega}\cdot \mathbf{m} -\im 0)^{-1}
=\mathrm{P.V.}\frac{1}{H-\boldsymbol{\omega}\cdot \mathbf{m}}+\im \pi \delta(H-\boldsymbol{\omega}\cdot \mathbf{m})
$  and formula (2.5) p. 156 \cite{taylor2})       and Assumption \ref{ass:FGR},
we have
\begin{align}\label{eq:estzm}
\sum_{\mathbf{m}\in \mathbf{R}_{\mathrm{min}} }\|\mathbf{z}^\mathbf{m}\|_{L^2}^2 \lesssim C_0\|u_0\|_{H^1}^2 +C(C_0)\|u_0\|_{L^2}^3.
\end{align}
By taking  $\|u_0\|_{H^1}< \delta_0$ with $\delta_0>0$  sufficiently small, the l.h.s. in \eqref{eq:estzm} is
smaller than $c_0^2 C_0\|u_0\|_{H^1}^2$ for a fixed $c_0$.  Adjusting the constant and using  \eqref{eq:esteta} we conclude that
\eqref{Strichartzradiation} with $C=C_0$ implies
\begin{align*}
   \|  \eta \| _{\mathrm{Stz}^1(I)} +\sum_{\mathbf{m}\in \mathbf{R}_{\mathrm{min}} }\|  \mathbf{z}^{\mathbf{m}} \| _{L^2_t(I)} &\le
  c_0  \sqrt{C_0}   \|u_0\|_{H^1}  < \frac{C_0}{2}\|u_0\|_{H^1}
\end{align*}
where $c_0$ is a fixed constant and we are free to choose $C_0>4c_0^2$, so that the last inequality is true. This completes the proof of Proposition \ref{prop:mainbounds}.
\qed

\section{Soliton and refined profile}
\label{sec:ref_profile}

In this section, we prove Proposition \ref{prop:rp}.
We first note that due to our notation \eqref{eq:zkakko}, $\mathbf{z}^{\mathbf{m}_1}\mathbf{z}^{\mathbf{m}_2}$ is not $\mathbf{z}^{\mathbf{m}_1+\mathbf{m}_2}$ in general. In fact, we have the following elementary lemma.

\begin{lemma}\label{lem:zmmult}
	Let $\mathbf{m}_1,\mathbf{m}_2\in \Z^N$ and $\mathbf{z}\in \C^N$.
	Then,
	\begin{align*}
	\mathbf{z}^{\mathbf{m}_1}\mathbf{z}^{\mathbf{m}_2}=|\mathbf{z}|^{|\mathbf{m}_1|+|\mathbf{m}_2|-|\mathbf{m}_1+\mathbf{m}_2|}\mathbf{z}^{\mathbf{m}_1+\mathbf{m}_2}.
	\end{align*}
\end{lemma}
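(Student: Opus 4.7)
The plan is to reduce the identity to a purely scalar statement and then dispatch it by a short case analysis on signs. Both sides of the claimed equality factor componentwise over $j=1,\dots,N$: by \eqref{eq:zkakko} one has $\mathbf{z}^{\mathbf{m}_1}\mathbf{z}^{\mathbf{m}_2}=\prod_{j=1}^N z_j^{(m_{1,j})}z_j^{(m_{2,j})}$ and $\mathbf{z}^{\mathbf{m}_1+\mathbf{m}_2}=\prod_{j=1}^N z_j^{(m_{1,j}+m_{2,j})}$, while the exponent vector $|\mathbf{m}_1|+|\mathbf{m}_2|-|\mathbf{m}_1+\mathbf{m}_2|$ has nonnegative entries by the one-dimensional triangle inequality $|a|+|b|\geq|a+b|$, so $|\mathbf{z}|^{|\mathbf{m}_1|+|\mathbf{m}_2|-|\mathbf{m}_1+\mathbf{m}_2|}$ is a bona fide componentwise product of nonnegative powers of $|z_j|$. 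Consequently it suffices to prove, for every $z\in\C$ and every pair $a,b\in\Z$,
\begin{equation*}
z^{(a)}\,z^{(b)}=|z|^{|a|+|b|-|a+b|}\,z^{(a+b)}.
\end{equation*}

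For this scalar identity I would split into cases according to the signs of $a$ and $b$. If $a,b$ have the same sign (including either being zero), then $|a|+|b|=|a+b|$, so the weight $|z|^{|a|+|b|-|a+b|}$ equals $1$, and the equality $z^{(a)}z^{(b)}=z^{(a+b)}$ follows directly from the definition $z^{(m)}=z^m$ (resp.\ $\bar z^{-m}$) by combining like exponents. The remaining case is $a\geq 0$ and $b<0$ (the case $a<0$, $b\geq 0$ being symmetric), which I would further divide into the subcases $a+b\geq 0$ and $a+b<0$. In the first subcase $|a|+|b|-|a+b|=-2b$, and using $|z|^{-2b}=z^{-b}\bar z^{-b}$ one checks $|z|^{-2b}z^{(a+b)}=z^{-b}\bar z^{-b}z^{a+b}=z^a\bar z^{-b}=z^{(a)}z^{(b)}$. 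In the second subcase $|a|+|b|-|a+b|=2a$, and $|z|^{2a}z^{(a+b)}=z^a\bar z^a\bar z^{-a-b}=z^a\bar z^{-b}=z^{(a)}z^{(b)}$.

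The main obstacle is nothing more than careful bookkeeping of the piecewise definition of $z^{(m)}$; there is no analytic difficulty. The only slightly non-routine observation is that the corrective factor is precisely $|z|^{|a|+|b|-|a+b|}$, which equals $(z\bar z)^{\min(a,-b)}$ in the mixed-sign regime and accounts exactly for the discrepancy between $z^a\bar z^{-b}$ and $z^{a+b}$ (or $\bar z^{-a-b}$).
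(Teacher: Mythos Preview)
Your proof is correct and follows essentially the same approach as the paper: both reduce to the scalar case $N=1$ and perform a sign-based case analysis, with your subcases $a+b\ge 0$ versus $a+b<0$ corresponding exactly to the paper's cases $j_0=2$ versus $j_0=1$ (where $j_0$ indexes the smaller of $|a|,|b|$).
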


\begin{proof}
	It suffices to consider   $N=1$, where $\mathbf{m}_1,\mathbf{m}_2\in \Z$. If they are both $\ge 0$   or $\le 0$, then $|\mathbf{m}_1|+|\mathbf{m}_2|-|\mathbf{m}_1+\mathbf{m}_2|=0$ and it is immediate from \eqref{eq:zkakko} that
	$\mathbf{z}^{\mathbf{m}_1}\mathbf{z}^{\mathbf{m}_2}= \mathbf{z}^{\mathbf{m}_1+\mathbf{m}_2}$.
	Otherwise, we reduce to $\mathbf{m}_1>0>\mathbf{m}_2$.
	Then $|\mathbf{m}_1|+|\mathbf{m}_2|-|\mathbf{m}_1+\mathbf{m}_2| =    2  |\mathbf{m}_{j_0}| $     with $|\mathbf{m}_{j_0}|= \min _{j}|\mathbf{m}_j|$. If $j_0=2$, we have $\mathbf{z}^{\mathbf{m}_1}\mathbf{z}^{\mathbf{m}_2} = \mathbf{z}^{\mathbf{m}_1} \bar{\mathbf{z}}^{ |\mathbf{m}_2|} = |\mathbf{z}| ^{2|\mathbf{m}_2|} \mathbf{z}^{\mathbf{m}_1+\mathbf{m}_2}$,
	which is the desired formula.
	If $j_0=1$, then $\mathbf{z}^{\mathbf{m}_1}\mathbf{z}^{\mathbf{m}_2} = \mathbf{z}^{\mathbf{m}_1} \bar{\mathbf{z}}^{ |\mathbf{m}_2|} = |\mathbf{z}| ^{2 \mathbf{m}_1 } \bar{ \mathbf{z}}^{ |\mathbf{m}_2|-\mathbf{m}_1}=  |\mathbf{z}| ^{2 \mathbf{m}_1 }\mathbf{z}^{\mathbf{m}_1+\mathbf{m}_2}$,
	which again is the desired formula.
\end{proof}

\begin{remark}
	Each component of $|\mathbf{m}_1|+|\mathbf{m}_2|-|\mathbf{m}_1+\mathbf{m}_2|$ is a nonnegative and even integer.
\end{remark}

\begin{proof}[Proof of Proposition \ref{prop:rp}]
	Recall $\boldsymbol{\phi}=(\phi_1,\cdots, \phi_N) \in \(  \Sigma^\infty \)^N$ are the eigenvectors of $H$ given in Assumption \ref{ass:linearInd}.
	We look for an approximate solution of \eqref{nls} of   form $u=\phi (\mathbf{z}(t))$ for  appropriate
	\begin{align}\label{eq:approx_sol}
	\phi (\mathbf{z}):=\mathbf{z}\cdot \boldsymbol{\phi} + \sum_{\mathbf{m}\in \mathbf{NR}_1}\mathbf{z}^{\mathbf{m}}\psi_{\mathbf{m}}(|\mathbf{z}|^2),
	\end{align}
	with real valued $\psi_{\mathbf{m}}$ and   orthogonality conditions $\<\psi_{\mathbf{e}_j}, \phi_j\>=0$ for all $j\in \{1,\cdots,N\}$.
	We set
	\begin{align}\label{eq:def_phi_j}
	\widetilde \phi_{\mathbf{m}}(|\mathbf{z}|^2):=\begin{cases} \phi_j +\psi_{\mathbf{e}_j}(|\mathbf{z}|^2) &  \text{  if }  \mathbf{m}=\mathbf{e}_j ,\\ \psi_{\mathbf{m}}(|\mathbf{z}|^2) &  \text{  if }  \mathbf{m}\in  \mathbf{NR}_1\setminus \mathbf{NR}_0.
	\end{cases}
	\end{align}
	
	\begin{remark} We will show that
		$\widetilde \phi_{\mathbf{m}}(|\mathbf{z}|^2)$ for $\mathbf{z}=0$ are equal to the $\widetilde \phi_{\mathbf{m}}(0)$ given in \eqref{eq:indefphigroot} and \eqref{eq:indefphi}.
	\end{remark}
	Assuming $z_j(t)=e^{-\im \varpi_j(|\mathbf{z}|^2) t}z_j$,  with $\varpi_j$ to be determined, from  $\frac{d}{dt}|z_j(t)|^2=0$
	we have
	\begin{align}\label{eq:rptd}
	\im \partial_t \phi (\mathbf{z}) = \sum_{\mathbf{m} \in \mathbf{NR}_1}\mathbf{z}^{\mathbf{m}} \(\boldsymbol{\varpi}\cdot\mathbf{m}\) \widetilde \phi_{\mathbf{m}}.
	\end{align}
	Next, we have
	\begin{align}\label{eq:rpH}
	H\phi (\mathbf{z}) = \sum_{\mathbf{m} \in \mathbf{NR}_1}\mathbf{z}^{\mathbf{m}} H \widetilde \phi_{\mathbf{m}}.
	\end{align}
	We need to Taylor expand the nonlinearity $g$ till the remainder becomes sufficiently small.
	We  will expand now  $g(|\phi (\mathbf{z})|^2)\phi (\mathbf{z})=\sum_{\mathbf{m} \in \mathbf{NR}_1}\mathbf{z}^{\mathbf{m}}g_{\mathbf{m}} + \widetilde R$ with $\| \widetilde R \|_{\Sigma^s}\lesssim_s \|\mathbf{z}\|^2\sum_{\mathbf{R}_{\mathrm{min}}}|\mathbf{z}^{\mathbf{m}}|$.
	We start with
	\begin{align*}
	|\phi (\mathbf{z})|^2 &= \(\sum_{\mathbf{m}_1\in \mathbf{NR}_1}\mathbf{z}^{\mathbf{m}_1}\widetilde \phi_{\mathbf{m}_1}\)  \(\sum_{\mathbf{m}_2\in \mathbf{NR}_1}\mathbf{z}^{-\mathbf{m}_2}\widetilde \phi_{\mathbf{m}_2}\) \\&=\sum_{\mathbf{m}\in \mathbf{NR}_1} |\mathbf{z}|^{2|\mathbf{m}|} \widetilde \phi_{\mathbf{m}}^2 +\sum_{\substack{\mathbf{m}_1,\mathbf{m}_2\in  \mathbf{NR}_1 \\ \mathbf{m}_1\neq \mathbf{m}_2}}  \mathbf{z}^{\mathbf{m}_1} \mathbf{z}^{-\mathbf{m}_2} \widetilde \phi_{\mathbf{m}_1}\widetilde \phi_{\mathbf{m}_2}.
	\end{align*}
	
	\begin{claim}\label{claim:rem1}
		Assume $\|\widetilde \phi_{\mathbf{m}}\|_{\Sigma^s}\lesssim_s 1$ for all $\mathbf{m}\in \mathbf{NR}_1$.
		Then, there exists $M>0$ s.t.\ for all $\mathbf{z} \in \C^N$ with $\|\mathbf{z}\|\leq 1$,
		\begin{align}
		\left \|\(\sum_{\mathbf{m}_1\neq \mathbf{m}_2} \mathbf{z}^{\mathbf{m}_1} \mathbf{z}^{-\mathbf{m}_2} \widetilde \phi_{\mathbf{m}_1}\widetilde \phi_{\mathbf{m}_2}\)^{M+1}\(\sum_{\mathbf{m}_3\in \mathbf{NR}_1}\mathbf{z}^{\mathbf{m}_3} \widetilde \phi_{\mathbf{m}_3}\)\right \|_{\Sigma^s} \lesssim_s \|\mathbf{z}\|^2\sum_{\mathbf{m}\in \mathbf{R}_{\mathrm{min}}}|\mathbf{z}^{\mathbf{m}}|. \label{eq:claim:rem1}
		\end{align}
	\end{claim}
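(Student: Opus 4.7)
The plan is to expand the left-hand side as a finite sum of monomials in $\mathbf{z}$, bound each one pointwise using the hypothesis on $\widetilde{\phi}$, and then sum. Applying the multinomial theorem to the $(M{+}1)$-st power and distributing the trailing factor, the left-hand side becomes a finite sum of terms of the form
\[
\mathfrak{c}_{\mathbf{a},\mathbf{b},\mathbf{c}}\,\prod_{j=1}^{M+1}\mathbf{z}^{\mathbf{a}_j}\mathbf{z}^{-\mathbf{b}_j}\cdot \mathbf{z}^{\mathbf{c}}\cdot \prod_{j=1}^{M+1}\widetilde{\phi}_{\mathbf{a}_j}\widetilde{\phi}_{\mathbf{b}_j}\cdot \widetilde{\phi}_{\mathbf{c}},
\]
indexed by tuples $(\mathbf{a},\mathbf{b},\mathbf{c})\in\mathbf{NR}_1^{2M+3}$ with $\mathbf{a}_j\neq\mathbf{b}_j$. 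Since $\mathbf{NR}_1$ is finite (Lemma \ref{lem:RminNR1isfinite}) and each $\widetilde{\phi}_{\mathbf{m}}$ lies in $\Sigma^s=H^s_{\gamma_0}$ with built-in exponential weight, the $\Sigma^s$-norm of the $\widetilde{\phi}$-product is uniformly bounded over tuples (the decay of the $2M+2$ additional factors easily absorbs $\cosh(\gamma_0 x)$ and controls the Sobolev norm pointwise). By Lemma \ref{lem:zmmult} the modulus of the $\mathbf{z}$-factor equals $\prod_l |z_l|^{\alpha_l}$ with $\alpha_l:=\sum_{j}(|a_{jl}|+|b_{jl}|)+|c_l|$, and $\|\alpha\|\geq 2M+3$.

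The driving estimate is then: if for each such $\alpha$ one can produce $\mathbf{p}\in\mathbf{R}_{\mathrm{min}}$ with $|\mathbf{p}|\preceq\alpha$ and $\|\alpha\|-\|\mathbf{p}\|\geq 2$, then using $\|\mathbf{z}\|\leq 1$,
\[
\prod_l |z_l|^{\alpha_l}=|\mathbf{z}^{\mathbf{p}}|\prod_l|z_l|^{\alpha_l-|p_l|}\leq |\mathbf{z}^{\mathbf{p}}|\,\|\mathbf{z}\|^{\|\alpha\|-\|\mathbf{p}\|}\leq \|\mathbf{z}\|^2\,|\mathbf{z}^{\mathbf{p}}|,
\]
and summing over the finitely many tuples yields \eqref{eq:claim:rem1}. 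To exhibit $\mathbf{p}$, I set $\mathbf{n}:=\sum_j(\mathbf{a}_j-\mathbf{b}_j)+\mathbf{c}\in\mathbb{Z}^N$, so that $\sum\mathbf{n}=1$ and $|\mathbf{n}|\preceq\alpha$ by the triangle inequality. If $\mathbf{n}\in\mathbf{R}$, the defining property of $\mathbf{R}_{\mathrm{min}}$ furnishes $\mathbf{p}\in\mathbf{R}_{\mathrm{min}}$ with $|\mathbf{p}|\preceq|\mathbf{n}|\preceq\alpha$; if $\mathbf{n}\in\mathbf{NR}\setminus\mathbf{NR}_1$, the definition \eqref{eq:defnr1} of $\mathbf{NR}_1$ supplies $\mathbf{p}\in\mathbf{R}_{\mathrm{min}}$ with $|\mathbf{p}|\prec|\mathbf{n}|\preceq\alpha$. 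These two cases are essentially free.

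The hard part will be the residual case $\mathbf{n}\in\mathbf{NR}_1$, where $\|\mathbf{n}\|$ is bounded by $K_1:=\max_{\mathbf{NR}_1}\|\cdot\|$ while $\|\alpha\|\geq 2M+3$ can be made arbitrarily large, so the componentwise-even excess $\alpha-|\mathbf{n}|$ is unbounded. Here I would run a pigeonhole on the $2M+3$ indices drawn from the finite set $\mathbf{NR}_1$, exploiting the constraint $\mathbf{a}_j\neq\mathbf{b}_j$: one checks by elementary case analysis that $\alpha$ cannot be concentrated on a single coordinate (this would force every factor to equal the same basis vector $\mathbf{e}_l$, forbidden by $\mathbf{a}_j\neq\mathbf{b}_j$), and more quantitatively that for $M\gtrsim |\mathbf{NR}_1|\cdot K^*$ with $K^*:=\max_{\mathbf{R}_{\mathrm{min}}}\|\cdot\|$, $\alpha$ must carry magnitude $\geq K^*$ on at least two distinct coordinates $l_1,l_2\in\mathrm{supp}(\alpha)$. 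Since any pair $\omega_{l_1}<\omega_{l_2}<0$ is resonant (the vector $k\mathbf{e}_{l_2}-(k-1)\mathbf{e}_{l_1}$ lies in $\mathbf{R}$ for large $k$, as $\omega \cdot (k\mathbf{e}_{l_2}-(k-1)\mathbf{e}_{l_1})=k(\omega_{l_2}-\omega_{l_1})+\omega_{l_1}$ becomes positive), the set $(\mathbf{R}|_{\{l_1,l_2\}})_{\mathrm{min}}\subseteq\mathbf{R}_{\mathrm{min}}$ is nonempty and any $\mathbf{p}$ therein is supported in $\{l_1,l_2\}$ with $|p_{l_i}|\leq K^*\leq \alpha_{l_i}$, so $|\mathbf{p}|\preceq\alpha$. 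Picking $M$ large enough secures both this pigeonhole threshold and the margin $\|\alpha\|-\|\mathbf{p}\|\geq 2$, completing the proof of \eqref{eq:claim:rem1}.
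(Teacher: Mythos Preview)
Your argument is correct, but it takes a genuinely different route from the paper. The paper never splits into the three cases $\mathbf{n}\in\mathbf{R}$, $\mathbf{n}\in\mathbf{NR}\setminus\mathbf{NR}_1$, $\mathbf{n}\in\mathbf{NR}_1$; instead it uses the \emph{ordering} of the eigenvalues to force a resonant index directly. Concretely, the paper first strips off one of the $M+1$ cross-factors to produce the prefactor $\|\mathbf{z}\|^2$ (via the remark that $\mathbf{m}_1\neq\mathbf{m}_2$ implies $|\mathbf{z}^{\mathbf{m}_1}\mathbf{z}^{-\mathbf{m}_2}|\le\|\mathbf{z}\|^2$), and then from each of the remaining $M$ factors $|\mathbf{z}^{\mathbf{m}_{1j}-\mathbf{m}_{2j}}|$ extracts a single factor $|z_{a_j}\bar z_{b_j}|$ with $a_j>b_j$ (possible after conjugation since $\mathbf{m}_{1j}-\mathbf{m}_{2j}$ has both signs). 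The tailor-made index $\mathbf{n}=\mathbf{e}_k+\sum_{j=1}^M(\mathbf{e}_{a_j}-\mathbf{e}_{b_j})$ then satisfies $\boldsymbol{\omega}\cdot\mathbf{n}\ge\omega_1+M\min_l(\omega_{l+1}-\omega_l)>0$ once $M$ is chosen to make this positive, so $\mathbf{n}\in\mathbf{R}$ automatically and one passes to some $\mathbf{m}\in\mathbf{R}_{\min}$ with $|\mathbf{m}|\preceq|\mathbf{n}|$.

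What each approach buys: the paper's argument gives an explicit and sharp value of $M$, avoids your Case~3 entirely, and makes transparent \emph{why} the cross terms are eventually resonant---each one carries an ``upward'' pair $(a_j,b_j)$ contributing a positive frequency gap. Your approach is more purely combinatorial and does not touch $\boldsymbol{\omega}$ until the very end; this is conceptually clean in Cases~1--2, but in Case~3 your pigeonhole (``for $M\gtrsim|\mathbf{NR}_1|\cdot K^*$, $\alpha$ carries mass $\ge K^*$ on two coordinates'') is stated loosely. It does work---since each pair $(\mathbf{a}_j,\mathbf{b}_j)$ has support of size $\ge 2$, the $M+1$ unordered pairs of support-indices land in $\binom{N}{2}$ bins, so some pair $\{l_1,l_2\}$ repeats $\ge (M+1)/\binom{N}{2}$ times, giving $\alpha_{l_1},\alpha_{l_2}\ge(M+1)/\binom{N}{2}$---but you should make that count explicit, and note that $\binom{N}{2}$ (not $|\mathbf{NR}_1|$) is the relevant constant. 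Your observation that $(\mathbf{R}|_{\{l_1,l_2\}})_{\min}\subset\mathbf{R}_{\min}$ is correct and is essentially the content of \eqref{eq:defmjk} in the appendix.
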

	
	\begin{proof} An $M\in \N$ such that $\omega_1+ M\min_{1\leq j\leq N-1} \(\omega_{j+1}- \omega_j\)>0$ will work. To begin, we remark that   for  $\|\mathbf{z}\|\leq 1$ we have $|\mathbf{z}^{\mathbf{m}_1} \mathbf{z}^{-\mathbf{m}_2}|\leq \|\mathbf{z}\| ^2$ for $\mathbf{m}_1\neq \mathbf{m}_2$.
		Indeed, by Lemma \ref{lem:zmmult} this can only fail if $|\mathbf{m}_1|+|\mathbf{m}_2|-|\mathbf{m}_1-\mathbf{m}_2| =0$. This implies $ m_{1j}m_{2j}\ge 0$ for all $j=1,...,N$.
		Furthermore, if the inequality  fails, we can reduce to the case  $ {|\mathbf{m}_1|-|\mathbf{m}_2|}= \mathbf{e}_{j_0}$ for an index $j_0$. So $m_{1j}=m_{2j}$ for all $j\neq j_0$, and $m_{1j_0}=m_{2j_0}\pm 1$. This is incompatible
		with $\sum \mathbf{m}_1 =\sum \mathbf{m}_2 =1$.
		
		\noindent With the above remark, we can take one of the factors of the $M+1$--th power in \eqref{eq:claim:rem1} bounding it with
$\|\mathbf{z}\|^2$, concluding that  to prove \eqref{eq:claim:rem1}   it suffices to show that   for   $\mathbf{m}_{1j}, \mathbf{m}_{2j},\mathbf{m}_3\in \mathbf{NR}_1$ with $\mathbf{m}_{1j}\neq \mathbf{m}_{2j}$, there exists $\mathbf{m}\in \mathbf{R}_{\min}$ s.t.
		\begin{align}\label{eq:res}|\mathbf{z}^{\mathbf{m}_3}|\prod_{j=1}^{M } |\mathbf{z}^{\mathbf{m}_{1j}} \mathbf{z}^ {-\mathbf{m}_{2j}} |\le
		|\mathbf{z}^{\mathbf{m}_3}|\prod_{j=1}^{M } |\mathbf{z}^{\mathbf{m}_{1j}-\mathbf{m}_{2j}} |\le  |\mathbf{z}^{\mathbf{m}}|  \text{  when $\|\mathbf{z}\|\leq 1$},
		\end{align}
		where the first inequality follows from Lemma \ref{lem:zmmult}. Noticing that complex conjugation does not change absolute value, we conclude that each factor $|\mathbf{z}^{\mathbf{m}_{1j}-\mathbf{m}_{2j}}|$ has at least one factor   $|z_{a_j } \overline{z}_{b_j  }|$ with  $a_j >b_j $. There is a nonzero component $  m _{3k}\neq 0$  of $\mathbf{m}_3$. Set
		\begin{equation*}
		\mathbf{n}:=\mathbf{e}_{k}+\sum _{j=1}^{M }\(  \mathbf{e}_{a_j}- \mathbf{e}_{b_j}  \) .
		\end{equation*}
		Obviously $\sum \mathbf{n}=1$. Moreover,   $\mathbf{n}\in \mathbf{R}$,  since, by our choice of $M$,
		\begin{align*}
		\boldsymbol{\omega} \cdot  \mathbf{n}=\omega_{k}+\sum _{j=1}^{M }\(  \omega_{a_j}- \omega _{b_j}  \)  \ge \omega_1+ M \min_{1\leq j\leq N-1} \(\omega_{j+1}- \omega_j\)>0.
		\end{align*}
		But for any  $\mathbf{n}\in \mathbf{R}$ there exists an $\mathbf{m}\in \mathbf R_{\mathrm{min}}$ s.t.\ $|\mathbf{m}|\preceq |\mathbf{n}|$. Obviously, all the factors of   the l.h.s. of \eqref{eq:res}
		which we ignored are $\leq 1$. This proves \eqref{eq:res} and completes the proof of Claim  \ref{claim:rem1}.
	\end{proof}
	
	We consider a Taylor expansion
	\begin{align}\nonumber
	&g(|\phi (\mathbf{z})|^2)\phi (\mathbf{z}) \\& =  \(\sum_{m=0}^M \frac{1}{m!}g^{(m)}\(\sum_{\mathbf{m} \in \mathbf{NR}_1} |\mathbf{z}|^{2|\mathbf{m}|} \widetilde \phi_{\mathbf{m}}^2\) \(\sum_{\mathbf{m}_1\neq \mathbf{m}_2} |\mathbf{z}|^{|\mathbf{m}_1|+|\mathbf{m}_2|-|\mathbf{m}_1-\mathbf{m}_2|} \mathbf{z}^{\mathbf{m}_1-\mathbf{m}_2} \widetilde \phi_{\mathbf{m}_1}\widetilde \phi_{\mathbf{m}_2}\)^{m}\) \nonumber\\&\quad \times\(\sum_{\mathbf{m}_3\in \mathbf{NR}_1}\mathbf{z}^{\mathbf{m}_3} \widetilde \phi_{\mathbf{m}_3}\)+\widetilde{\mathcal R},\label{eq:rp1}
	\end{align}
	where $\widetilde{\mathcal R} = \mathcal O\(\|\mathbf{z}\|^2\sum_{\mathbf{m}\in \mathbf{R}_{\mathrm{min}}}|\mathbf{z}^{\mathbf{m}}|\)$,   by Claim \ref{claim:rem1}, and so can be absorbed in the   $\mathcal{R}(\mathbf{z}(t))$
	in \eqref{eq:nlsforce}.
	Thus, we only have to consider the contribution of the summation.
	For $0\leq m\leq M$, we have
	\begin{align*}
	&\(\sum_{\mathbf{m}_1\neq \mathbf{m}_2} |\mathbf{z}|^{|\mathbf{m}_1|+|\mathbf{m}_2|-|\mathbf{m}_1-\mathbf{m}_2|} \mathbf{z}^{\mathbf{m}_1-\mathbf{m}_2} \widetilde \phi_{\mathbf{m}_1}\widetilde \phi_{\mathbf{m}_2}\)^{m}\(\sum_{\mathbf{m}_3\in \mathbf{NR}_1}\mathbf{z}^{\mathbf{m}_3} \widetilde \phi_{\mathbf{m}_3}\)\\&
	=\sum_{\substack{\mathbf{m}_{1j}\neq \mathbf{m}_{2j}\\ \mathbf{m}_3}}|\mathbf{z}|^{\sum_{j=1}^m\(|\mathbf{m}_{1j}|+|\mathbf{m}_{2j}|\)+|\mathbf{m}_3|-|\sum_{j=1}^m (\mathbf{m}_{1j}-\mathbf{m}_{2j})+\mathbf{m}_3|} \(\prod_{j=1}^m \widetilde\phi_{\mathbf{m}_{1j}} \widetilde\phi_{\mathbf{m}_{2j}} \)\widetilde\phi_{\mathbf{m}_{3}}\mathbf{z}^{\sum_{j=1}^m(\mathbf{m}_{1j}-\mathbf{m}_{2j})+\mathbf{m}_3}
	\end{align*}
	Thus, if for each $\mathbf{m} \in \Z^N$, we set
	\begin{align*}
	g_\mathbf{m}&:=g_{\mathbf{m}}(|\mathbf{z}|^2,\{\psi_{\mathbf{m}}\}_{\mathbf{m}\in \mathbf{NR}_1}):=\sum_{m=0}^M \frac{1}{m!}g^{(m)}\(\sum_{\mathbf{n} \in \mathbf{NR}_1} |\mathbf{z}|^{2|\mathbf{n}|} \widetilde \phi_{\mathbf{n}}^2\) \\&\quad \times \sum_{\substack{\mathbf{m}_3,\mathbf{m}_{kj}\in \mathbf{NR}_1,\ k=1,2,\ j=1,\cdots,m\\ \sum_{j=1}^m (\mathbf{m}_{1j}-\mathbf{m}_{2j})+\mathbf{m}_3=\mathbf{m} \\ \mathbf{m}_{1j}\neq \mathbf{m}_{2j}}} |\mathbf{z}|^{\sum_{j=1}^m\(|\mathbf{m}_{1j}|+|\mathbf{m}_{2j}|\)+|\mathbf{m}_3|-|\mathbf{m}|}\(\prod_{j=1}^m \widetilde\phi_{\mathbf{m}_{1j}} \widetilde\phi_{\mathbf{m}_{2j}} \)\widetilde\phi_{\mathbf{m}_{3}},
	\end{align*}
	for $\widetilde{ \mathcal R}$   the term given in \eqref{eq:rp1}  we obtain
	\begin{align}\label{eq:rpn}
	g(|\phi (\mathbf{z})|^2)\phi (\mathbf{z})=\sum_{\mathbf{m}\in \mathbf{NR}_1}\mathbf{z}^{\mathbf{m}}g_{\mathbf{m}}+\sum_{\mathbf{m}\not\in \mathbf{NR}_1}\mathbf{z}^{\mathbf{m}}g_{\mathbf{m}}+\widetilde{ \mathcal R}.
	\end{align}
	
	\begin{remark} Notice that
		$\left . g_{\mathbf{m}}(|\mathbf{z}|^2,\{\psi_{\mathbf{m}}(|\mathbf{z}|^2)\}_{\mathbf{m} \in \mathbf{NR}_1}) \right |_{\mathbf{z}=0}$ coincides with the  $g_{\mathbf{m}}(0)$   in \eqref{eq:indefphigroot} and \eqref{eq:indefg}.
	\end{remark}
	Summing up,   we obtain  the following (where in the 2nd line we have a finite sum)
	\begin{align} \label{eq:approx_sol_1}\im \partial_t \phi (\mathbf{z}) - H \phi (\mathbf{z}) - g(|\phi (\mathbf{z})|^2)\phi (\mathbf{z})=
	&\sum _{\mathbf{m}\in \mathbf{NR}_1} \mathbf{z^m}  \( (\boldsymbol{\varpi}\cdot \mathbf{m})\widetilde \phi_{\mathbf{m}} - H\widetilde \phi_{\mathbf{m}}-g_{\mathbf{m}} \) \\&- \sum_{\mathbf{m}\not\in \mathbf{NR}_1}\mathbf{z}^{\mathbf{m}}g_{\mathbf{m}}-\widetilde{ \mathcal R}. \nonumber
	\end{align}
	Notice that, by the definition of $\mathbf{NR}_1$ and $\mathbf{R}_{\mathrm{min}}$, we have
	\begin{align*}
	\|\sum_{\mathbf{m}\not\in \mathbf{NR}_1\cup \mathbf{R}_{\mathrm{min}} }\mathbf{z}^{\mathbf{m}}g_{\mathbf{m}}\|_{\Sigma^s} \lesssim \|\mathbf{z}\|^2 \sum_{\mathbf{m}\in \mathbf{R}_{\min}} |\mathbf{z}^{\mathbf{m}}|.
	\end{align*}
	Thus, entering $G_{\mathbf{m}}$ for $\mathbf{m}\in \mathbf{R}_{\mathrm{min}}$ defined in \eqref{eq:defG}
	and
	\begin{align*}
	\mathcal R(\mathbf{z}):=\sum_{\mathbf{m}\not\in \mathbf{NR}_1\cup \mathbf{R}_{\mathrm{min}}}\mathbf{z}^{\mathbf{m}}g_{\mathbf{m}} + \sum_{\mathbf{m}\in \mathbf{R}_{\mathrm{min}}}\mathbf{z}^{\mathbf{m}}\(g_{\mathbf{m}}(|\mathbf{z}|^2)-G_{\mathbf{m}}\) + \widetilde{ \mathcal R},
	\end{align*}
	we have the estimate \eqref{est:R}. Thus
	the proof of Proposition \ref{prop:rp} follows  if the 1st summation in the r.h.s. of   \eqref{eq:approx_sol_1} cancels out, that is,  if we solve   the system
	\begin{align} \label{eq:system_recur}
	(\boldsymbol{\varpi}\cdot \mathbf{m})\widetilde \phi_{\mathbf{m}}=H\widetilde \phi_{\mathbf{m}}+g_{\mathbf{m}},\ \mathbf{m}\in \mathbf{NR}_1.
	\end{align}
	Here the unknowns are $\boldsymbol{\varpi}$ and   $ \psi_{\mathbf{m}}$, since the latter determines $\widetilde \phi_{\mathbf{m}}$ by \eqref{eq:def_phi_j}, while
	$g_{\mathbf{m}}$ are   given  functions of both the variables $|\mathbf{z}|^2$ and $\{\psi_{\mathbf{m}}\}_{\mathbf{m} \in \mathbf{NR}_1}$.
	We will later determine $\{\psi_{\mathbf{m}}\}_{\mathbf{m} \in \mathbf{NR}_1}$ as a function of $|\mathbf{z}|^2$, and so at the end $\boldsymbol{\varpi}$  and $g_{\mathbf{m}}$
	will depend only on $|\mathbf{z}|^2$.

	\noindent We first focus on \eqref{eq:system_recur} for $\mathbf{m} =\mathbf{e}_j$ splitting in the direction parallel to $\phi _j$
	and the space orthogonal to  $\phi _j$. In the direction  parallel to $\phi _j$, that is taking inner product with $\phi_j$ (and recalling assumption $\<\psi_{\mathbf{e}_j}, \phi_j\>=0$),
	we have
	\begin{align}\label{eq:system_recur0}
	\varpi_j(|\mathbf{z}|^2,\{\psi_{\mathbf{m}}\}_{\mathbf{m}\in \mathbf{NR}_1}) = \omega_j + \<g_{\mathbf{e}_j}(|\mathbf{z}|^2,\{\psi_{\mathbf{m}}\}_{\mathbf{m} \in \mathbf{NR}_1}), \phi_j\>.
	\end{align}
	This determines    $\boldsymbol{\varpi}$ as a function of $|\mathbf{z}|^2$ and $\{\psi_{\mathbf{m}}\}_{\mathbf{m} \in \mathbf{NR}_1}$.
	Later  we will determine $\{\psi_{\mathbf{m}}\}_{\mathbf{m} \in \mathbf{NR}_1}$ as a function of $|\mathbf{z}|^2$, so in the end $\boldsymbol{\varpi}$ will be a function of $|\mathbf{z}|^2$. Notice also that
	$\varpi_j(0,\{\psi_{\mathbf{m}}\}_{\mathbf{m}\in \mathbf{NR}_1}) =\omega_j$ because $g_{\mathbf{e}_j}(0,\{\psi_{\mathbf{m}}\}_{\mathbf{m} \in \mathbf{NR}_1})=0$, as can be seen from the definition of $g_\mathbf{m}$.

	\noindent Next, set
	\begin{align*}
	A_\mathbf{m}:=\begin{cases}
	\(\left.(H-\omega_j)\right|_{\{\phi_j\}^\perp}\)^{-1} & \mathbf{m}=\mathbf{e}_j\in \mathbf{NR}_0,\\ (H-\mathbf{m}\cdot \boldsymbol{\omega})^{-1} & \mathbf{m} \in \mathbf{NR}_1\setminus \mathbf{NR}_0.
	\end{cases}
	\end{align*}
	The following lemma is standard and we skip the proof.
	\begin{lemma}\label{lemma:invH}
		For all $\mathbf{m}\in \mathbf{NR}_1$ and any $s\in \R$  we have $\|A_{\mathbf{m}}\|_{\Sigma^s\to \Sigma^{s+2}}\lesssim_s 1$.
	\end{lemma}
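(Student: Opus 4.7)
The plan is to verify each case of the definition of $A_\mathbf{m}$ by reducing to standard resolvent estimates for Schrödinger operators on weighted Sobolev spaces. For $\mathbf{m}\in\mathbf{NR}_1\setminus\mathbf{NR}_0$, I would first check that $\lambda:=\mathbf{m}\cdot\boldsymbol{\omega}$ lies in the resolvent set of $H$: by definition of $\mathbf{NR}$ we have $\lambda<0$, so $\lambda$ avoids the essential spectrum $[0,\infty)$; and if $\lambda=\omega_k$ for some $k$, then $(\mathbf{m}-\mathbf{e}_k)\cdot\boldsymbol{\omega}=0$ with $\sum(\mathbf{m}-\mathbf{e}_k)=0$, forcing $\mathbf{m}=\mathbf{e}_k$ by Assumption~\ref{ass:linearInd} and contradicting $\mathbf{m}\notin\mathbf{NR}_0$. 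For $\mathbf{m}=\mathbf{e}_j$, the simplicity and isolation of $\omega_j$ ensure that $H-\omega_j$ restricted to $\{\phi_j\}^\perp$ is a self-adjoint operator whose spectrum is bounded away from zero by $d_j:=\min\bigl(\min_{k\neq j}|\omega_j-\omega_k|,|\omega_j|\bigr)$. In both cases, standard elliptic regularity applied to $(H-\lambda)u=f$ with $V$ Schwartz yields the unweighted bound $\|A_\mathbf{m}\|_{H^s\to H^{s+2}}\lesssim_s 1$, uniformly over the finite set $\mathbf{NR}_1$.

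The next step is to upgrade this to the exponentially weighted spaces $\Sigma^s=H^s_{\gamma_0}$, and the key ingredient is the classical Combes--Thomas/Agmon estimate: whenever $\lambda$ lies at distance $d>0$ from the spectrum of $H$, the conjugated operator $e^{\gamma\langle x\rangle}(H-\lambda)^{-1}e^{-\gamma\langle x\rangle}$ is bounded on $H^s$ for any $\gamma<\sqrt{d}$, with a bound depending only on $d-\gamma^2$ and $\|V\|_{L^\infty}$. Because $\mathbf{NR}_1$ is finite by Lemma~\ref{lem:RminNR1isfinite}, the gap $d$ relevant for each $A_\mathbf{m}$ admits a uniform positive lower bound (equal to $\min_j d_j$ for $\mathbf{NR}_0$ and to $\min_{\mathbf{m}\in\mathbf{NR}_1\setminus\mathbf{NR}_0}|\mathbf{m}\cdot\boldsymbol{\omega}|$ otherwise). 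By shrinking the constant $\gamma_0$ of Proposition~\ref{prop:gam} if necessary, we can arrange $\gamma_0$ to be strictly smaller than the square root of this uniform bound; then the equivalence of $\cosh(\gamma_0 x)$ and $e^{\gamma_0\langle x\rangle}$ as weights converts the conjugated estimate into $\|A_\mathbf{m}\|_{\Sigma^s\to\Sigma^{s+2}}\lesssim_s 1$ for $s\ge 0$. The case $s<0$ then follows by duality, using that each $A_\mathbf{m}$ is self-adjoint on its natural $L^2$ domain and that $\Sigma^{-s}=(\Sigma^s)^*$.

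The main potential obstacle is the borderline behavior at the indices producing the smallest spectral gap, typically $\mathbf{e}_N$ where $d_N$ can be as small as $|\omega_N|$. However, this is exactly the threshold that already forces the choice of $\gamma_0$ in Proposition~\ref{prop:gam} (since $\phi_N$ itself only decays at rate $\sqrt{|\omega_N|}$), so no additional constraint on $\gamma_0$ is required beyond what is already built into the setup. Once this compatibility is noted, every remaining step is a textbook estimate for Schrödinger resolvents with Schwartz potential and the lemma follows.
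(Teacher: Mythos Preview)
The paper does not prove this lemma at all; it simply declares it ``standard'' and skips the proof. Your sketch is a correct and complete way to fill this gap: the spectral argument showing $\mathbf{m}\cdot\boldsymbol{\omega}\in\rho(H)$ for $\mathbf{m}\in\mathbf{NR}_1\setminus\mathbf{NR}_0$ is exactly right (using Assumption~\ref{ass:linearInd}), and the Combes--Thomas/Agmon mechanism combined with elliptic regularity is indeed the standard route to resolvent bounds on exponentially weighted Sobolev spaces.

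One small caveat: your final paragraph overstates the case. For $\mathbf{m}\in\mathbf{NR}_1\setminus\mathbf{NR}_0$ the spectral gap $\mathrm{dist}(\mathbf{m}\cdot\boldsymbol{\omega},\sigma(H))$ need not be bounded below by $|\omega_N|$; depending on the particular $\boldsymbol{\omega}$, some $\mathbf{m}\cdot\boldsymbol{\omega}$ could sit arbitrarily close to an eigenvalue or to $0$. So the claim that ``no additional constraint on $\gamma_0$ is required beyond what is already built into the setup'' is not justified in general. This does not affect the validity of your argument, since you already noted the correct fix in the preceding paragraph: $\mathbf{NR}_1$ is finite, hence the minimum gap is positive, and one may simply take $\gamma_0$ in Proposition~\ref{prop:gam} small enough from the outset to accommodate it. That is entirely consistent with the paper, which only asserts the existence of \emph{some} $\gamma_0>0$.
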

	It is elementary that \eqref{eq:system_recur} holds if and only if both \eqref{eq:system_recur0}  and   the following  system hold:
	\begin{align}\label{eq:system_recur1}
	F_{\mathbf{m}}(|\mathbf{z}|^2,\{\psi_{\mathbf{n}}\}_{\mathbf{n}\in\mathbf{NR}_1}):=\psi_{\mathbf{m}}-A_\mathbf{m} \( (\boldsymbol{\varpi}-\boldsymbol{\omega})\cdot\mathbf{m} \psi_{\mathbf{m}} -g_\mathbf{m}\)=0,\ \mathbf{m}\in \mathbf{NR}_1.
	\end{align}
	We have $\{F_{\mathbf{m}}\}_{\mathbf{m}\in\mathbf{NR}_1}\in C^\infty(\R^N\times \Sigma^s_{\mathbf{NR}_1},\Sigma^s_{\mathbf{NR}_1})$, for
	\begin{align*}
	\Sigma^{s}_{\mathbf{NR}_1}=\Sigma^{s}_{\mathbf{NR}_1}(\R^3, \R) :=\left\{ \{\psi_{\mathbf{m}}\}_{\mathbf{m}\in\mathbf{NR}_1}\in (\Sigma^s(\R^3, \R))^{\sharp\mathbf{NR}_1}\ |\ \<\psi_{\mathbf{e}_j},\phi_j\>=0,\ j=1,\cdots, N\right\}.
	\end{align*}

	Since, for $D_{\{\psi_{\mathbf{m}}\}_{\mathbf{m}\in \mathbf{NR}_1}}F$   the Fr\'echet derivative of $F$ w.r.t.\ the $\{\psi_{\mathbf{m}}\}_{\mathbf{m}\in\mathbf{NR}_1}$,
	\begin{align*}
	\{F_{\mathbf{m}}(0,0)\}_{\mathbf{m}\in \mathbf {NR}_1}=0 \text{  and } D_{\{\psi_{\mathbf{m}}\}_{\mathbf{m}\in \mathbf{NR}_1}}F(0,0)=\mathrm{Id}_{\Sigma^s_{\mathbf{NR}_1}},
	\end{align*}by implicit function theorem  there exist  $\delta_s>0$  and $\{\psi_{\mathbf{m}}(\cdot)\}_{\mathbf{m}\in \mathbf {NR}_1}\in C^\infty(B_{\R^N}(0,\delta_s),\Sigma^s_{\mathbf{NR}_1})$ s.t.\
	\begin{align*}
	F_{\mathbf{m}}(|\mathbf{z}|^2, \{\psi_{\mathbf{n}}(|\mathbf{z}|^2)\}_{\mathbf{n}\in \mathbf {NR}_1})=0,\ \mathbf{m}\in \mathbf {NR}_1.
	\end{align*}
	Setting  $\boldsymbol{\varpi}(|\mathbf{z}|^2):=\boldsymbol{\varpi}(|\mathbf{z}|^2,\{\psi_{\mathbf{n}}(|\mathbf{z}|^2)\}_{\mathbf{n}\in \mathbf {NR}_1} )$,  $g_{\mathbf{m}}(|\mathbf{z}|^2):=g_{\mathbf{m}}(|\mathbf{z}|^2,\{\psi_{\mathbf{n}}(|\mathbf{z}|^2)\}_{\mathbf{n}\in \mathbf {NR}_1})$,
	and $u(t)=\phi(\mathbf{z}(t))$ with $z_j(t)=e^{-\varpi_j(|\mathbf{z}|^2)t}z_j$ and  $\boldsymbol{\phi}  $  defined in \eqref{eq:approx_sol}, we obtain the conclusions of Proposition\ref{prop:rp}.
\end{proof}

\begin{remark}\label{re:zero}
	From \eqref{eq:system_recur1} we have $\psi_{\mathbf{e}_j} (0) =0$, since $g_{\mathbf{e}_j} (0) =0$  and $\left . \varpi_j(\mathbf{z},\{\psi_{\mathbf{m}}\}_{\mathbf{m}\in \mathbf{NR}_1}) \right | _{\mathbf{z}=0} =\omega_j$, as we remarked under \eqref{eq:system_recur0}.
\end{remark}

\section{Darboux theorem and proof of Proposition \ref{prop:Darcor}}\label{sec:Dar1}




In   this  section, we will always assume $B\subset_{\mathrm{dense}} H$  (i.e.\ $B$ is a dense subset of $H$) where $B$ is a reflexive Banach space and $H$ is a Hilbert space.
We further always identify $H^*$ with $H$ by the isometric isomorphism $H\ni u\mapsto \<u,\cdot\> \in H^*$, where $\<\cdot,\cdot\>$ is the inner product of $H$.
We will also denote the coupling between $B^*$ and $B$ by $\<f,u\>$.

When we have $B\subset_{\mathrm{dense}} H \subset_{\mathrm{dense}} B^*$, we think $B$ as a ``regular" subspace of $H$ and $B^*$.
We introduce several notation.

\begin{definition}
	Let $U\subset B^*$.
	Let $\varphi$ be a $C^\infty$-diffeomorphism from  $U\subset B^*$ to $\varphi(U) \subset B^*$.
	We call $\varphi$ a ($B$-)almost identity if $\varphi(u)-u \in C^\infty(U,B)$.
\end{definition}

\begin{definition}
	Let $U\subset B^*$.
	We define $B$-regularizing vector fields $\mathfrak{X}_\sharp(U)$ and  regularizing 1--forms $\Omega^1_\sharp(U)$ by
	\begin{align*}
	\fxr(U):=C^\infty(U,B) \subset \fx(U):=C^\infty(U,B^*)\text{ and }\Or^1(U):=\Omega^1(U).
	\end{align*}
	Here, for a Banach space $B_1$ and an open subset $U_1\subset B_1$, the space of $k$-forms are given by $\Omega^k(U_1,\mathcal{L}^k_a(B_1,\R))$ where $\mathcal L^k_a(B_1,\R)$ is the Banach space of anti-symmetric $k$-linear operators.
\end{definition}

\begin{remark}
	$B$-regularizing 1--forms are mere 1--forms on $B^*$.
	However, if we think ``standard" 1--forms as differential forms defined on $H$, $B$-regularizing 1--forms are more regular than ``standard" 1--forms because they make sense with more ``rough" vectors which are in $B^*$ and not in $H$.
	We further remark
	\begin{align*}
	\Or^1(U)=C^\infty(U,\mathcal L(B^*,\R))\simeq C^\infty(U,B)=\fxr(U).
	\end{align*}
\end{remark}

\begin{definition}[Symplectic forms]
	Let $U\subset H$.
	We say that $\Omega \in \Omega^2(U)$ is a    symplectic form on $U$ if $d \Omega=0$ and if for each $u\in U$ the following map is an isomorphism:
	\begin{align}\label{def:symplector}
	H\ni v\mapsto \Omega(u)(v,\cdot)\in  H^*\simeq H
	\end{align}
	Following  \cite{BGR15Survey},  we call the map  in \eqref{def:symplector} symplector, denoting it by $J(u)$.
	The symplector satisfies $J\in C^\infty(U,\mathcal L(H))$.
	If there exists an open set $V\subset B^*$ s.t.\ $U\subset V$ and we can extend $J$ and $J ^{-1 } $ on $V$ so that both  are in  $  C^\infty(V,\mathcal L(B))$, we say that $\Omega$ is a $B$-compatible  symplectic form.
\end{definition}

\begin{remark}
	Our symplectic form is the strong symplectic form of \cite{AMRBook}.
\end{remark}

Let $\Omega$ be a symplectic form and $J$ be the associated symplector.
Then, we have
\begin{align}\label{eq:symplectorrep}
\Omega(u)(X,Y)=\<J(u)X,Y\>,\ X,Y\in H.
\end{align}
Moreover, if $\Omega$ is a $B$-compatible symplectic form, then for $X\in B$ and $Y\in B^*$, we can define $\Omega(u)(X,Y)$ by \eqref{eq:symplectorrep}.
Of course we can also define $\Omega(u)(Y,X)$ by $\Omega(u)(Y,X):=-\Omega(u)(X,Y)$.

The symplector corresponding to the symplectic form $\Omega_0$ given in \eqref{def:ssymp} is $J(u)=\im$.
Obviously, this symplectic form is $B$ compatible for any Banach space $B\subset_{\mathrm{dense}} H$ satisfying the property $f\in B\Rightarrow \im f\in B$.
In particular, if $H=L^2(\R^3,\C)$ and $B=H^s_{\gamma}(\R^3,\C)$, $\Omega_0$ is a $H^s_{\gamma}(\R^3,\C)$-compatible symplectic form.

We next consider a small perturbation of a $B$-compatible symplectic form.

\begin{lemma}\label{lem:symppert}
	Let $U_1\subset H$ and $U_2\subset B^*$ with $U_1\subset U_2$, $\Omega$ be a $B$-compatible symplectic form and $F\in \Omega_\sharp^1(U_2)$.
	Let $u_0\in U_1$ and assume $F(u_0)=0$ and $DF(u_0)=0$.
	Then, there exists an open set $V\subset U_1$ in $H$ s.t.\ $\Omega+dF$ is an $B$-compatible symplectic form on $V$, where $dF \in\Omega^2(U)$ is the exterior derivative of $F$.
\end{lemma}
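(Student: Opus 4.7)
The plan has three parts: verify closedness of $\Omega + dF$, identify the associated symplector, and establish its invertibility in the required topologies.

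Closedness is immediate from $d(\Omega+dF) = d\Omega + d^2 F = 0$. For the symplector, I would use the identification $\Or^1(U_2) \simeq C^\infty(U_2, B)$ to view $F$ as a smooth $B$-valued map with $F(u)(X) = \<F(u),X\>$. Evaluating the exterior derivative on constant vector fields $X,Y$ yields
\begin{equation*}
dF(u)(X,Y) = \<DF(u)X,Y\> - \<DF(u)Y,X\>,
\end{equation*}
where $DF(u) \in \mathcal L(B^*, B)$. Using symmetry of the real pairing together with reflexivity of $B$, this identifies the symplector of $dF$ at $u$ as $K(u) := DF(u) - DF(u)^{\mathsf T}$, with $DF(u)^{\mathsf T}$ the $H$-transpose, which again lies in $\mathcal L(B^*, B)$. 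Hence $K \in C^\infty(U_2, \mathcal L(B^*, B))$, and via the continuous inclusions $B \subset H \simeq H^* \subset B^*$ (which use density and reflexivity) one obtains by restriction $K \in C^\infty(U_2, \mathcal L(H))$ and $K \in C^\infty(U_2, \mathcal L(B))$.

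Invertibility then follows from a Neumann series argument, exploiting the hypothesis $DF(u_0) = 0$, which forces $K(u_0) = 0$. Since $J$ is an isomorphism at $u_0$ in $\mathcal L(H)$ and, by $B$-compatibility of $\Omega$, also in $\mathcal L(B)$, continuity of $K$ in the $B^*$-topology yields a $B^*$-open neighborhood $\widetilde V$ of $u_0$ on which $\|J(u)^{-1}K(u)\|_{\mathcal L(H)} < 1/2$ and $\|J(u)^{-1}K(u)\|_{\mathcal L(B)} < 1/2$ simultaneously. On $\widetilde V$ the factorization $J+K = J(I+J^{-1}K)$ is invertible in both $\mathcal L(H)$ and $\mathcal L(B)$ with smooth inverse given by Neumann series. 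Setting $V := \widetilde V \cap U_1$, which is open in $H$ because the inclusion $H \subset B^*$ is continuous, provides the required open neighborhood on which $\Omega + dF$ is a $B$-compatible symplectic form.

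The main obstacle is purely bookkeeping: one must track the mapping properties of $DF$ carefully and verify that the $H$-transpose preserves $\mathcal L(B^*, B)$, so that $K$ genuinely takes values in the smoothing class. Once this is in hand, the vanishing $K(u_0) = 0$ makes $K$ a small perturbation of $J$ in every relevant operator norm, and the remainder reduces to standard perturbation theory of isomorphisms.
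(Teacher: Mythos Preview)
Your proposal is correct and follows essentially the same route as the paper: identify the symplector of $\Omega+dF$ as $J(u)+K(u)$ with $K(u)=DF(u)-(DF(u))^*\in\mathcal L(B^*,B)$, then use $DF(u_0)=0$ to obtain invertibility in both $\mathcal L(H)$ and $\mathcal L(B)$ near $u_0$. Your version is slightly more explicit about the Neumann series and about intersecting with $U_1$ to get $V$ open in $H$, but the argument is the same.
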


\begin{remark}
	Since $F\in  \Or^1(U)=C^\infty(U,\mathcal L(B^*,\R))$, we have $DF \in C^\infty(U,\mathcal L^2(B^*,\R))$.
	If $DF(u_0)=0$, then from the definition of exterior derivative, we have $dF(u_0)=0$ too.
\end{remark}

\begin{proof}
	We identify $\mathcal L^2(B^*,\R)$ with $\mathcal L(B^*,\mathcal L(B^*,\R))=\mathcal L(B^*,B^{**})\simeq \mathcal L(B^*,B)$.
	In this case, we can write $DF(u)(X,Y)=\<DF(u)X,Y\>$ for $X,Y\in B^*$.
	Therefore, we have
	\begin{align*}
	dF(u)(X,Y)=\<DF(u)X,Y\>-\<DF(u)Y,X\>=\<\(DF(u)-(DF(u))^*\)X,Y\>,
	\end{align*}
	where $(DF(u))^*\in \mathcal L(B^*,B)$ is the adjoint of $DF(u)\in \mathcal L(B^*,B)$.
	Thus, for $J \in C^\infty(U_1,\mathcal L(H))$  the symplector of $\Omega$, we have
	\begin{align}\label{eq:symppert}
	\Omega(u)(X,Y)+dF(u)(X,Y)=\<\(J(u)+DF(u)-(DF(u))^*\)X,Y\>.
	\end{align}
	Since $DF(u_0)=0$, there exists an open neighborhood $V$ of $u_0$ in $B^*$ s.t.\ $J(u)+DF(u)-(DF(u))^*$ is invertible for all $u\in V$.
	Hence $\Omega+dF$ is a symplectic form with symplector
$J(u)+DF(u)-(DF(u))^*$.
	Since $DF(u)-(DF(u))^* \in \mathcal L(B^*,B)$, the restriction of $DF(u)-(DF(u))^*$ to $B$ is in $\mathcal L(B)$.
	Therefore, we have the conclusion.
\end{proof}

We are now in the  position to prove an Darboux theorem with appropriate error estimates.

\begin{proposition}[Darboux theorem]\label{prop:darboux}
	Let $U_1\subset H$ and $U_2\subset B^*$ be open sets with $U_1\subset U_2$ and  let $\Omega_1$ be a $B$-compatible symplectic form and $F\in \Omega_\sharp^1(U_2)$.
	Let $u_0\in U_1$ and assume $F(u_0)=0$ and $DF(u_0)=0$. Set $\Omega_2:=\Omega_1+d F$.
	Then, there exists an open neighborhood $V\subset U_2$ of $u_0$ in $B^*$ and a map  $\varphi  \in C^\infty(V, B^*)$   s.t.\
	$
	\varphi^* \Omega_2=\Omega_1,
	$
	and
	\begin{align}\label{eq:Darerr}
	\forall u\in V,\ \| \varphi(u) - u \|_{B}\lesssim \|F(u)\|_{B}.
	\end{align}
\end{proposition}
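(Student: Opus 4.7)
The plan is to apply Moser's deformation trick to produce $\varphi$ as the time-$1$ map of a suitable non-autonomous flow. Define the one-parameter family
\begin{equation*}
\Omega_t := \Omega_1 + t\, dF, \qquad t \in [0,1],
\end{equation*}
so that $\Omega_0 = \Omega_1$ and $\Omega_1|_{t=1} = \Omega_2$. By (a parameter-uniform version of) Lemma \ref{lem:symppert}, shrinking the neighborhood of $u_0$ if necessary, there is an open $V \subset U_2$ containing $u_0$ on which every $\Omega_t$ is a $B$-compatible symplectic form, with symplector $J_t = J_1 + t\bigl(DF - (DF)^*\bigr)$ satisfying $J_t, J_t^{-1} \in \mathcal{L}(B)$ uniformly in $t \in [0,1]$; here one uses $DF(u_0)=0$ and smoothness to make $DF - (DF)^*$ as small as needed in $\mathcal{L}(B^*, B)$.

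Next I would find a smooth time-dependent $X_t \in \fxr(V)$ whose flow $\varphi_t$ satisfies $\varphi_t^*\Omega_t \equiv \Omega_1$. Differentiating in $t$, applying Cartan's magic formula, and using $d\Omega_t = 0$ reduces this to the condition $d(\iota_{X_t}\Omega_t + F) = 0$, which is satisfied provided
\begin{equation*}
\iota_{X_t}\Omega_t = -F, \qquad \text{i.e.,} \qquad J_t X_t = -F.
\end{equation*}
Since $F \in \Or^1(V)$ is identified with an element of $C^\infty(V, B)$ and $J_t^{-1} \in \mathcal{L}(B)$ is uniformly bounded, this uniquely defines $X_t \in \fxr(V) = C^\infty(V, B)$ with the pointwise estimate
\begin{equation*}
\|X_t(u)\|_B \lesssim \|F(u)\|_B,
\end{equation*}
uniformly in $t \in [0,1]$ and $u \in V$.

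The hypothesis $F(u_0)=0$ forces $X_t(u_0)=0$, so $u_0$ is a stationary point of the non-autonomous ODE $\dot u = X_t(u)$. Standard Picard--Lindelöf in the Banach space $B^*$, together with continuous dependence on initial data, produces a smooth flow $\varphi_t$, defined on $[0,1]$, on a smaller $B^*$-neighborhood $V' \subset V$ of $u_0$, with $\varphi_t(V') \subset V$. Setting $\varphi := \varphi_1$ and integrating the identity $\frac{d}{dt}(\varphi_t^*\Omega_t) = \varphi_t^* d(\iota_{X_t}\Omega_t + F) = 0$ from $\varphi_0^*\Omega_0 = \Omega_1$ gives $\varphi^*\Omega_2 = \Omega_1$.

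For the bound \eqref{eq:Darerr}, I would use
\begin{equation*}
\varphi_t(u) - u = \int_0^t X_s\bigl(\varphi_s(u)\bigr)\, ds,
\end{equation*}
combined with $\|X_s(v)\|_B \lesssim \|F(v)\|_B$. A Taylor expansion, using $DF(u_0) = 0$, gives $\|F(v) - F(u)\|_B \leq \varepsilon \|v - u\|_{B^*} \leq \varepsilon \|v - u\|_B$ on $V'$, with $\varepsilon$ arbitrarily small after further shrinking. Inserting this and applying Gronwall yields $\|\varphi_t(u) - u\|_B \lesssim \|F(u)\|_B$ on $[0,1]$, hence in particular at $t=1$. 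The main obstacle is the interleaving of quantitative steps: the neighborhood $V'$ must be chosen small enough that $J_t^{-1}$ stays uniformly bounded in $\mathcal{L}(B)$, that the flow does not escape $V$ over the time interval $[0,1]$, and that the Lipschitz constant of $F$ viewed as $B^* \to B$ is small enough to close the Gronwall loop; all three of these rely on $F(u_0)=0$ and $DF(u_0)=0$, but must be arranged simultaneously rather than sequentially.
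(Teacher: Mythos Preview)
Your proposal is correct and follows essentially the same Moser deformation argument as the paper: both interpolate $\Omega_t=\Omega_1+t\,dF$, solve $J_tX_t=-F$ via a Neumann-series inversion of the symplector to get $\|X_t(u)\|_B\lesssim\|F(u)\|_B$, run the non-autonomous flow, and close the error estimate by combining the smallness of $DF$ near $u_0$ with a bootstrap/Gronwall. The paper organizes the quantitative step as two explicit claims (one controlling the flow in $B^*$ so it stays in the domain, then upgrading to the $B$-estimate), which is exactly the ``interleaving'' you flag in your final paragraph.
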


\begin{proof}
	We first set $$\Omega_{s+1}:= \Omega_1 +s(\Omega_2-\Omega_1)=\Omega_1+s dF,$$ and look for a vector field $\mathcal X_{s+1}$ that satisfies $i_{\mathcal X_{s+1}}\Omega_{s+1} :=\Omega_{s+1}(\mathcal X_{s+1},\cdot)= -F$.
	\begin{claim}\label{claim:darb1}
		There exists an open neighborhood $V_1$ of $u_0$ in $B^*$ s.t.\
		there exists
		\begin{align}\label{eq:dar1}
		\mathcal X_{\cdot +1} \in C^\infty((-2,2) \times V_1,B)\ \text{satisfying } i_{\mathcal X_{s+1}} \Omega_{s+1} = - F
		\end{align}
		and such that  there exists a $C_1>0$ s.t.
		\begin{align}\label{eq:darest1}
		\sup_{s\in (-2,2)}\|\mathcal X_{s+1}(u)\|_{B}\le C_1  \|F(u)\|_{B} \text{  for all $u\in V_1$.}
		\end{align}
	\end{claim}
	
	\noindent  \textit{Proof of Claim \ref{claim:darb1}.}
	Since $F\in \Or^1(U_2)=C^\infty(U_2,\mathcal L(B^*,\R))\simeq C^\infty(U_2,B)$, we can express $F(u)X=\<F(u),X\>$ for any $X\in B^*$.
	Therefore, using also \eqref{eq:symppert}, \eqref{eq:dar1} can be expressed as
	\begin{align*}
	(J_1(u)+s\(DF(u)-(DF(u))^*\))\mathcal X_{s+1} = -F(u),
	\end{align*}
	where $J_1 \in C^\infty(U,\mathcal L(H))$ is the symplector of $\Omega_1$.
	Since $\Omega_1$ is a $B$-compatible symplectic form, we can extend $J_1$ to $J_1\in C^\infty(V',\mathcal L(B))$ for some $U\subset V'$ with $V'\subset B^*$.
	
	Take now $\delta_1>0$ sufficiently small, so that $\overline{\mathcal{B}_{B^*}(u_0,\delta_1)}\subset V'$ and
	\begin{align*}
	\sup_{u\in {\mathcal{B}_{B^*}(u_0,\delta_1)}}\|DF(u )\|_{\mathcal L(B^*,B)}\leq (8\sup_{u \in {\mathcal{B}_{B^*}(u_0,\delta_1)}} \|J_1(u)^{-1}\|_{\mathcal L(B)})^{-1}.
	\end{align*}
	Then, we have
	\begin{align*}
	\forall u\in  {\mathcal{B}_{B^*}(u_0,\delta_1)},\ \|J_1(u)^{-1}\(DF(u)-(DF(u))^*\) \|_{\mathcal L(B^*,B)}\leq \frac14
	\end{align*}
	Thus, by Neumann series we have
	\begin{align*}
	\mathcal X_{s+1}(u)=-\sum_{n=0}^\infty \( sJ_1(u)^{-1}\(DF(u)-(DF(u))^*\)\)^nJ_1(u)^{-1}F(u),
	\end{align*}
	where the r.h.s.\ absolutely converges uniformly for $s\in (-2,2)$.
	Therefore, setting $V_1=\mathcal{B}_{B^*}(u_0,\delta_1)$, we have the conclusion.
	\qed
	
	\begin{claim}\label{claim:darb2}
		There exit  an open neighborhood $V_2\subset V_1 $ of $u_0$   in  $B^* $ and a   map $\widetilde\varphi_{\cdot} \in C^\infty((-2,2)\times V_2,B )$   s.t.\
		\begin{align}\label{eq:darode}
		\frac{d}{ds}\widetilde\varphi_{s }(u)=\mathcal X_{s+1}(u+\widetilde\varphi_{s }(u)),\quad \widetilde\varphi_0(u)=0
		\end{align}
		and
		\begin{align}\label{eq:darodeest}
		\sup_{s\in[0,1]}\|\widetilde\varphi_s(u)\|_B\le 2C_1 \|F(u)\|_B  \text{  for all $u\in V_2$.}
		\end{align}
	\end{claim}
	
	\noindent \textit{Proof of Claim \ref{claim:darb2}.}
	The existence of $\widetilde\varphi$ satisfying \eqref{eq:darode} is standard so we concentrate on the estimate \eqref{eq:darodeest}.
	First, by the assumption $F(u_0)=0$ and $DF(u_0)=0$, there exists $\delta_2\in (0,\delta_1]$ s.t.\ for $u\in \overline{\mathcal{B}_{B^*}(u_0,\delta_2)}$, we have $\|F(u)\|_B \leq \frac{1}{4C_1} \|u-u_0\|_{B^*}$, where $C_1>0$ is the constant \eqref{eq:darest1}.
	We define $s^*(u)\in [0,1]$ by
	\begin{align*}
	s^*(u):=\min(\inf\{s\in (0,2)\ |\ \varphi_s(u)\not\in \overline{\mathcal{B}_{B^*}(u_0,\delta_2/2)}\},\ 1).
	\end{align*}
	Then, since $\widetilde \varphi_s$ is continuous,
	we have $s^*(u)>0$ for $u \in \mathcal{B}_{B^*}(u_0,\delta_2/2)$.
	Furthermore,
	\begin{align*}
	\sup_{s\in[0,s^*(u)]} \|\widetilde \varphi_s(u)\|_{B^*} &\leq \sup_{s\in [0,s^*(u)]}\int_0^s \|\mathcal X_{s'+1}(u+\widetilde \varphi_{s'}(u))\|_{B^*} \,ds' \\&\leq C_1\sup_{s\in [0,1]} \|F(u+\widetilde \varphi_s(u))\|_B \leq \frac{1}{4}\(\|u-u_0\|_{B^*}+\sup_{s\in [0,1]} \|\widetilde \varphi_s(u)\|_{B^*}\).
	\end{align*}
	Thus, we  conclude that $s^*(u)=1$   from
	\begin{align}\label{eq:dar:estphi}
	\sup_{s\in[0,s^*(u)]}\|\widetilde\varphi_s(u)\|_{B^*}\leq \frac13 \|u-u_0\|_{B^*}\leq \frac16 \delta_2<\frac12 \delta_2.
	\end{align}
	Since $B\subset H\subset B^*$, there exists $C_2\geq 1$ s.t.\ for all $u\in B$, $\|u\|_{B^*}\leq C_2 \|u\|_B$.
	Now, take $\delta_3\in (0,\delta_2]$ s.t.\ if $u\in \overline{\mathcal{B}_{B^*}(u_0,\delta_3)}$, then $\|DF(u)\|_{\mathcal L(B^*,B)}\leq (2C_1C_2)^{-1}$.
	Then, for all $u\in \overline{\mathcal{B}_{B^*}(u_0,\delta_3/2)}$ and all $s,s_1\in[0,1]$, we have $u+s_1\widetilde \varphi_s(u)\in \overline{\mathcal{B}_{B^*}(u_0,\delta_3)}$ by \eqref{eq:dar:estphi}.
	Therefore, by Taylor expansion and \eqref{eq:dar1},
	\begin{align*}
	\sup_{s\in[0,1]}\|\widetilde \varphi_s(u)\|_{B}&\leq C_1\|F(u)\|_B+C_1\sup_{s_1\in [0,1]} \|DF(u+s_1\widetilde\varphi_s(u))\|_{\mathcal L(B^*,B)}\|\widetilde \varphi(u)\|_{B^*}\\&
	\leq C_1\|F(u)\| _{B}+\frac12 \sup_{s\in [0,1]}\|\widetilde\varphi_s(u)\|_B.
	\end{align*}
	This completes the proof of  Claim \ref{claim:darb2}.
	\qed

	We set $\varphi_s(u):=u+\widetilde\varphi _s(u)$.
	Then, by Cartan's formula, see (7.4.6) \cite{AMRBook},    we have
	\begin{align}\label{eq:Moser}
	\frac{d}{ds} \varphi_s^* \Omega_{s+1} =\varphi_s^*\(\mathcal L_{\mathcal X_{s+1}}\Omega_{s+1} + dF\)=\varphi_s^*\(\(d i_{\mathcal X_{s+1}}+i_{\mathcal X_{s+1}}d\)\Omega_{s+1} + dF\)=0.
	\end{align}
	Therefore, since $\varphi_0=\mathrm{id}$, we have
	\begin{align*}
	\Omega_2=\varphi_1^* \Omega_1.
	\end{align*}
	Setting $\varphi:=\varphi_1$, we have the conclusion.
\end{proof}

     We show now that  Proposition \ref{prop:Darcor}   follows from Proposition \ref{prop:darboux}.
\begin{proof}[Proof of Proposition \ref{prop:Darcor}]
	Let $B=\Sigma^s$ and $H=L^2$.
	Set $F:=2^{-1} \Omega_0 ( D_\mathbf{z}\phi(\mathbf{z})D\mathbf{z},\eta )$. Then, by   $F \in \Omega ^1 _\sharp (\mathcal{B}_{\Sigma^{-s}} (0,\delta_s))$ and by the identification $\Omega ^1 _\sharp (\mathcal{B}_{\Sigma^{-s}} (0,\delta_s))\simeq C^\infty(\mathcal{B}_{\Sigma^{-s}} (0,\delta_s),\Sigma^{ s})$, we have $F(u)=-\im 2^{-1}(D_\mathbf{z}\phi(\mathbf{z})D\mathbf{z} )^*\im \eta $.  Notice  the cancelation
	\begin{align*}
	(D_\mathbf{z}\phi(\mathbf{z})D\mathbf{z} )^*\im \eta =  (D_\mathbf{z}\phi(\mathbf{z})D\mathbf{z}  - D_\mathbf{z}(\mathbf{z}\boldsymbol{\phi})D\mathbf{z} )^*\im \eta .
	\end{align*}
	So from  \eqref{eq:approx_sol} and by $ \|  (D_\mathbf{z} (\mathbf{z}^{\mathbf{m}}\psi_{\mathbf{m}}(|\mathbf{z}|^2))D\mathbf{z})^*  \| _{ \mathcal{L} (\Sigma^{ -s}, \Sigma^{ s} )}  =  \|   D (\mathbf{z}^{\mathbf{m}}\psi_{\mathbf{m}}(|\mathbf{z}|^2)) D\mathbf{z}  \| _{ \mathcal{L} (\Sigma^{ -s}, \Sigma^{ s} )} $, we have
	\begin{align*}
	\| F (u)\| _{\Sigma^{ s}} \le \sum_{\mathbf{m}\in \mathbf{NR}_1}    \|  (D_\mathbf{z} (\mathbf{z}^{\mathbf{m}}\psi_{\mathbf{m}}(|\mathbf{z}|^2))D\mathbf{z})^* \im \eta \| _{\Sigma^{ s}} \le  \sum_{\mathbf{m}\in \mathbf{NR}_1}
	\|   D (\mathbf{z}^{\mathbf{m}}\psi_{\mathbf{m}}(|\mathbf{z}|^2))  D\mathbf{z} \| _{ \mathcal{L} (\Sigma^{ -s}, \Sigma^{ s} )}  \|\eta \|_{\Sigma^{-s}} .
	\end{align*}
	Next we use the fact that, for   $\mathbf{m}\in \mathbf{NR}_1$, we have
	\begin{align*} &
	\|   D_\mathbf{z} (\mathbf{z}^{\mathbf{m}}\psi_{\mathbf{m}}(|\mathbf{z}|^2)) D\mathbf{z}  \| _{ \mathcal{L} (\Sigma^{ -s}, \Sigma^{ s} )} \le   \|   D_{\mathbf{z}} (\mathbf{z}^{\mathbf{m}}\psi_{\mathbf{m}}(|\mathbf{z}|^2))   \| _{   \Sigma^{ s}  }  \| D\mathbf{z} \| _{\mathcal{L} (\Sigma^{ -s}, \C ^N )}   \le C_s  \|\mathbf{z}\|^2,
	\end{align*}
	where  for $\mathbf{m}\in \mathbf{NR}_0$,  $\|   D_{\mathbf{z}} (\mathbf{z}^{\mathbf{m}}\psi_{\mathbf{m}}(|\mathbf{z}|^2))   \| _{   \Sigma^{ s}  } \le C   \|\mathbf{z}\|^2 $ follows from Remark \ref{re:zero}, and for $\mathbf{m}\in \mathbf{NR}_1\backslash  \mathbf{NR}_0  $ it follows from $|\mathbf{z^m}|\le \|\mathbf{z}\|^3$, since $\mathbf{z^m}$ has an odd number of factors.

	\noindent Summing up,   we have proved
	\begin{align}\label{eq:Darcor2}
	\| F (u)\| _{\Sigma^{ s}} \le C_s  \|\mathbf{z}\|^2 \|\eta \|_{\Sigma^{-s}} .
	\end{align}
	Then the statement of Proposition \ref{prop:Darcor} is a consequence  of $\Omega _0=\Omega _1 +dF$
	and
	of   Proposition \ref{prop:darboux}.
\end{proof}

\appendix

\section{Proofs of Lemma \ref{lem:RminNR1isfinite}    and of Proposition \ref{lem:generic g}}
\label{append:1}

\begin{proof}[Proof of Lemma \ref{lem:RminNR1isfinite}]
For $j,k\in \{1,\cdots,N\}$, $j< k$, set $n_{jk}$ to be the smallest integer satisfying $n_{jk}( \omega_k-\omega_j)+\omega_k>0$.
Then, for $\mathbf{m}^{(jk)}=(m_1^{(jk)},\cdots,m_N^{(jk)})$ defined by
\begin{align}\label{eq:defmjk}
m_j^{(jk)}= -n_{jk},\ m_k^{(jk)}=n_{jk}+1\text{ and }m_l^{(jk)}=0\  (l\neq j,k),
\end{align}
we have $\mathbf{m}^{(jk)} \in \mathbf{R}_{\mathrm{min}}$.
Suppose $\mathbf{R}_{\mathrm{min}}$ is an infinite set.
Then, there exists $j\in \{1,\cdots,N\}$ and $\{\mathbf{m}_k\}_{k=1}^\infty\subset \mathbf{R}_{\mathrm{min}}$ s.t.\ $|m_{kj}|  \xrightarrow {k\to \infty }  \infty$.
If there exists $M>0$ s.t.\ for all  $l\neq j$, $|m_{kj}|\leq M$, then $\mathbf{m}_k$ cannot satisfy $\sum \mathbf{m}_k=1$.
Therefore, if necessary taking a subsequence, there exists $l\neq j$ s.t.\ $|m_{kl}|  \xrightarrow {k\to \infty } \infty$.
However, for $k$ sufficiently large, we have $|\mathbf{m}^{(jl)}|\prec |\mathbf{m}_k|$ with $\mathbf{m}^{(jl)} \in \mathbf{R}_{\mathrm{min}}$   defined by \eqref{eq:defmjk}. This, by the definition of $\mathbf{R}_{\mathrm{min}}$ in \eqref{eq:defRmin}, implies $\mathbf{m}_k\not \in \mathbf{R}_{\mathrm{min}}$,
contradicting the hypothesis $\mathbf{m}_k  \in \mathbf{R}_{\mathrm{min}}$.

Let $\mathbf{m}\in \mathbf{NR}_1$.
It is elementary, by the definition of $\mathbf{NR}_1$ \eqref{eq:defnr1},   that for all $\mathbf{n} \in \mathbf{R}_{\mathrm{min}}$, either there exists $j$ s.t.\ $|n_j|>|m_j|$ or $ | \mathbf{n} | = | \mathbf{m} |$.
So, for $\mathbf{n}=\mathbf{m}^{(jk)}$  in \eqref{eq:defmjk}, we have either $|\mathbf{m}|=|\mathbf{m}^{(jk)}|$ or $|m_l|<|m_l^{(jk)}|$ for $l=j$ or $k$.
Since there are finitely many  $\mathbf{m}\in \mathbf{NR}_1$ s.t.\ $|\mathbf{m}|=|\mathbf{m}^{(jk)}|$, we can assume $|\mathbf{m}|\neq |\mathbf{m}^{(jk)}|$ for all $j<k$.
Thus, for all $j<k$, we have $|m_l|< m^{(jk)}_l$ for at least one of  $l \in \{j,k\}$.
It is easy to conclude that $|m_j|\leq \max_{1\leq k<l\leq N} \(|n_{kl}|+1\)$ for all $j$  except for at most one. However, from
$\sum \mathbf{m} =1$ it is immediate that this special $j$ must satisfy  $|m_j| \leq N \max_{1\leq k<l\leq N} \(|n_{kl}|+1\)$.
Thus,   $\mathbf{m}$ is in a fixed bounded set. Hence $\mathbf{NR}_1$ is a finite set.
\end{proof}

\textit{Proof of Proposition \ref{lem:generic g}}. The simple proof is analogous to Bambusi and Cuccagna \cite[p.1444]{BC11AJM}.
 For $\mathbf{m} \in \mathbf{R}_{\mathrm{min}}$  set $\N \ni L_{\mathbf{m}} := \frac{\| \mathbf{m} \| -1}{2}$. Then
from \eqref{eq:defAmm}--\eqref{eq:defG}, for any $\mathbf{m} \in \mathbf{R}_{\mathrm{min}}$ we have
\begin{align*} &
G_{\mathbf{m}}=  N _{\mathbf{m}} \frac{g^{\(   L_{\mathbf{m}} \)}(0)}{L_{\mathbf{m}} !}   \phi ^{\mathbf{m}}  + K _{\mathbf{m}},\end{align*}
where $N _{\mathbf{m}} \in \N$ is the number of elements of $ A(L_{\mathbf{m}},\mathbf{m})$, which in this particular case is given by
the set
\begin{align*} & A(L_{\mathbf{m}},\mathbf{m})= \left\{ \{\mathbf{e}_{\ell _j}\}_{j=1}^{\|\mathbf{m} \|}\in (\mathbf{NR}_0)^{\|\mathbf{m} \| }\ |\ \sum_{j=0} ^{ L_{\mathbf{m}}} \mathbf{e}_{\ell_{2j+1}}-\sum_{j=1}^{ L_{\mathbf{m}}} \mathbf{e}_{\ell_{2j }}= \mathbf{m} \right\} ,
\end{align*}
and where
\begin{align*} & K _{\mathbf{m}} :=\sum_{1\le m < L_{\mathbf{m}}  }   \frac{1}{m!}g^{(m)}(0)\sum_{(\mathbf{m}_1,\cdots,\mathbf{m}_{2m+1})\in A(m,\mathbf{m})}\widetilde\phi_{\mathbf{m}_1}(0)\cdots \widetilde{\phi}_{\mathbf{m}_{2m+1}}(0).
\end{align*}
So, expanding we have on the sphere $S _{\mathbf{m}}=\{  \xi : |\xi |^2 = \mathbf{m}\cdot  \boldsymbol{\omega} \}$ we obtain
\begin{align*} &  \| \widehat{G} _{\mathbf{m}}  \| ^{2}_{L^2(S _{\mathbf{m}})} =  \( N _{\mathbf{m}} \frac{g^{\(   L_{\mathbf{m}} \)}(0)}{L_{\mathbf{m}} !} \) ^2   \| \widehat{\phi ^ \mathbf{m}}   \| ^{2}_{L^2(S _{\mathbf{m}})} + 2 N _{\mathbf{m}} \frac{g^{\(   L_{\mathbf{m}} \)}(0)}{L_{\mathbf{m}} !} \< \widehat{\phi ^ \mathbf{m}} , \widehat{K _{\mathbf{m}}} \> _{L^2(S _{\mathbf{m}})} +   \| \widehat{K} _{\mathbf{m}} \| ^{2}_{L^2(S _{\mathbf{m}})} .
\end{align*}
Equating the above to 0 we obtain, in view of \eqref{eq:FGRsimplified}, a quadratic equation for $g^{\(   L_{\mathbf{m}} \)}(0)$
which expresses it in terms of $(g' (0),...., g ^{(L_{\mathbf{m}}-1)} (0))$. This proves Proposition \ref{lem:generic g}.
\qed

\section*{Acknowledgments}
C. was supported by a FIRB of the University of Trieste.
M.M. was supported by the JSPS KAKENHI Grant Number 19K03579, G19KK0066A, JP17H02851 and JP17H02853.

Department of Mathematics and Geosciences,  University
of Trieste, via Valerio  12/1  Trieste, 34127  Italy.
{\it E-mail Address}: {\tt scuccagna@units.it}

Department of Mathematics and Informatics,
Graduate School of Science,
Chiba University,
Chiba 263-8522, Japan.
{\it E-mail Address}: {\tt maeda@math.s.chiba-u.ac.jp}


\begin{thebibliography}{10}

\bibitem{AMRBook}
R.~Abraham, J.~E. Marsden, and T.~Ratiu, \emph{Manifolds, tensor analysis, and
  applications}, third ed., Applied Mathematical Sciences, vol.~75,
  Springer-Verlag, New York, 1988. 

\bibitem{BC11AJM}
D. Bambusi and S. Cuccagna, \emph{On dispersion of small energy
	solutions to the nonlinear {K}lein {G}ordon equation with a potential}, Amer.
J. Math. \textbf{133} (2011), no.~5, 1421--1468.

\bibitem{BP2}
V.Buslaev and  G.Perelman, {\em On the stability of solitary waves for
nonlinear Schr\"odinger equations},   Nonlinear evolution
equations, editor N.N. Uraltseva, Transl. Ser. 2, 164, Amer. Math.
Soc.,
     75--98, Amer. Math. Soc., Providence (1995).





\bibitem{CazSemi}
T. Cazenave, \emph{Semilinear {S}chr\"odinger equations}, Courant Lecture
  Notes in Mathematics, vol.~10, New York University Courant Institute of
  Mathematical Sciences, New York, 2003. 


\bibitem{Cohen-Tannoudji}
C. Cohen-Tannoudji, B. Diu and F. Lalo\"e, \emph{ Quantum Mechanics. Vol. I }, 1991. Wiley, New-York,





  \bibitem{CM15APDE}
S. Cuccagna and M. Maeda, \emph{On small energy stabilization in the
  {NLS} with a trapping potential}, Anal. PDE \textbf{8} (2015), no.~6,
  1289--1349. 




\bibitem{BGR15Survey}
S. De~Bi\`evre, F. Genoud and S. Rota~Nodari, \emph{Orbital
  stability: analysis meets geometry}, Nonlinear optical and atomic systems,
  Lecture Notes in Math., vol. 2146, Springer, Cham, 2015, 147--273.






\bibitem{GNT}
S.Gustafson,   K.Nakanishi, T.P.Tsai,  {\em Asymptotic stability and completeness in the energy space for nonlinear Schr\"odinger equations with small solitary waves}, Int. Math. Res. Not.  2004 (2004) no.  \textbf{  66}, 3559--3584.


\bibitem{GP11SIMA}
S. Gustafson and T. V.~Phan, \emph{Stable directions for degenerate
	excited states of nonlinear schr\"odinger equations}, SIAM Journal on
Mathematical Analysis  \textbf{43} (2011), no.~4, 1716--1758.


%

\bibitem{LPBook}
F. Linares and G. Ponce, \emph{Introduction to nonlinear dispersive
  equations}, second ed., Universitext, Springer, New York, 2015. 

\bibitem{Maeda17SIMA}
M.Maeda, \emph{Existence and {A}symptotic {S}tability of
	{Q}uasi-{P}eriodic {S}olutions of {D}iscrete {NLS} with {P}otential}, SIAM J.
Math. Anal.  \textbf{49} (2017), no.~5, 3396--3426.

\bibitem{MR03GAFA}
F.~Merle and P.~Raphael, \emph{Sharp upper bound on the blow-up rate for the
	critical nonlinear {S}chr\"odinger equation}, Geom. Funct. Anal.  {13}
(2003), no.~3, 591--642. 

\bibitem{MR04}
F.~Merle and P.~Raphael, \emph{On universality of blow-up profile for
	{$L^2$} critical nonlinear {S}chr\"odinger equation}, Invent. Math.
\textbf{156} (2004), no.~3, 565--672.

\bibitem{MR05AM}
F.~Merle and P.~Raphael,  \emph{The blow-up dynamic and upper bound on the blow-up rate for
	critical nonlinear {S}chr\"odinger equation}, Ann. of Math. (2)  \textbf{161}
(2005), no.~1, 157--222.




\bibitem{MR4}F.Merle and P.Raphael, \emph{On a sharp lower bound on the blow-up rate for the $L^2$ critical nonlinear Schr\"odinger equation }, J. Amer. Math. Soc.
   \textbf{19} (2006),  37--90.


%
%

\bibitem{Sigal93CMP}
I.~M. Sigal, \emph{Nonlinear wave and {S}chr\"odinger equations. {I}.
	{I}nstability of periodic and quasiperiodic solutions}, Comm. Math. Phys.
\textbf{153} (1993), no.~2, 297--320.


  \bibitem{SW3}
A.Soffer and  M.I.Weinstein, {\em Resonances, radiation damping and
instability in Hamiltonian nonlinear wave equations},  Invent.
Math.   \textbf{   136}
 (1999),
  9--74.

  \bibitem{SW04RMP}
  A.~Soffer and M.~I. Weinstein, \emph{Selection of the ground state for
  	nonlinear {S}chr\"odinger equations}, Rev. Math. Phys. \textbf{16} (2004),
  no.~8, 977--1071.



%


  \bibitem{taylor2}M. Taylor, \emph{Partial Differential Equations II},   App.
Math.Scienc. 116, Springer, New York (1997).



\bibitem{TY02ATMP}
T.P. Tsai and H.T. Yau, \emph{Classification of asymptotic profiles
	for nonlinear {S}chr\"odinger equations with small initial data}, Adv. Theor.
Math. Phys. \textbf{6} (2002), no.~1, 107--139.

\bibitem {TY1}
  T.P.Tsai and  H.T.Yau, {\em Asymptotic dynamics of nonlinear
Schr\"odinger equations: resonance dominated and radiation dominated
solutions}, Comm. Pure Appl. Math.  \textbf{  55}  (2002),   153--216.







\bibitem{Y1}   K.Yajima, {\em The $W^{k,p}$-continuity of wave operators
for Schr\"{o}dinger operators}, J. Math. Soc. Japan,  \textbf{47} (1995),
 551--581.



\end{thebibliography}
\end{document}